\def\O{\mathbf{O}}
\DeclareMathOperator{\aut}{Aut}
\DeclareMathOperator{\cay}{Cay}
\DeclareMathOperator{\cyc}{Cyc}
\DeclareMathOperator{\id}{id}
\DeclareMathOperator{\iso}{Iso}
\DeclareMathOperator{\orb}{Orb}
\DeclareMathOperator{\rk}{rk}
\DeclareMathOperator{\Span}{Span}
\DeclareMathOperator{\sym}{Sym}
\DeclareMathOperator{\rad}{rad}
\DeclareMathOperator{\Sup}{Sup}
\DeclareMathOperator{\DCI}{DCI}
\DeclareMathOperator{\CI}{CI}
\def\r{\mathrm{right}}
\def\@seccntformat#1{\csname the#1\endcsname. } 
\def\@biblabel#1{#1.}
\title{The Cayley isomorphism property for the group $C^5_2\times C_p$}
\author{Grigory Ryabov}
\address{Sobolev Institute of Mathematics, Novosibirsk, Russia}
\address{Novosibirsk State University, Novosibirsk, Russia}
\email{gric2ryabov@gmail.com}
\thanks{The work is supported by Mathematical Center in Akademgorodok under agreement No.~075-15-2019-1613 with the Ministry of Science and Higher Education of the Russian Federation}
\date{}
\newtheorem{prop}{Proposition}[section]
\newtheorem{theo}{Theorem}[section]
\newtheorem{lemm}[prop]{Lemma}
\newtheorem*{corl1}{Corollary}
\theoremstyle{definition}
\newtheorem*{rem1}{Remark 1}
\begin{document}

\vspace{\baselineskip}
\vspace{\baselineskip}

\vspace{\baselineskip}

\vspace{\baselineskip}

\begin{abstract}
A finite group $G$ is called a \emph{$\DCI$}-group if two Cayley digraphs over $G$ are isomorphic if and only if their connection sets are conjugate by a group automorphism. We prove that the group $C_2^5\times C_p$, where $p$ is a prime, is a $\DCI$-group if and only if $p\neq 2$. Together with the previously obtained results, this implies that a group $G$ of order $32p$, where $p$ is a prime, is a $\DCI$-group if and only if $p\neq 2$ and $G\cong C_2^5\times C_p$.
\\
\\
\textbf{Keywords}: Isomorphisms, $\DCI$-groups, Schur rings.
\\
\textbf{MSC}: 05C25, 05C60, 20B25.
\end{abstract}

\maketitle

\section{Introduction}

Let $G$ be a finite group and $S\subseteq G$. The \emph{Cayley digraph} $\cay(G,S)$ over $G$ with the connection set $S$ is defined to be the digraph with the vertex set $G$ and the arc set $\{(g,sg): g\in G, s\in S\}$. Two Cayley digraphs over $G$ are called \emph{Cayley isomorphic} if there exists an isomorphism between them which is also an automorphism of $G$. Clearly, two Cayley isomorphic Cayley digraphs are isomorphic. The converse statement is not true in general (see~\cite{AlPar, ET}). A subset $S\subseteq G$ is called a \emph{$\CI$-subset} if for each $T\subseteq G$ the Cayley digraphs $\cay(G,S)$ and $\cay(G,T)$ are isomorphic if and only if they are Cayley isomorphic. A finite group $G$ is called a \emph{$\DCI$-group} (\emph{$\CI$-group}, respectively) if each subset of $G$ (each inverse-closed subset of $G$, respectively) is a $\CI$-subset.

The investigation of $\DCI$-groups was initiated by \'Ad\'am~\cite{Adam} who conjectured, in our terms, that every cyclic group is a $\DCI$-group. This conjecture was disproved by Elspas and Turner in~\cite{ET}. The problem of determining of all finite $\DCI$- and $\CI$-groups was suggested by Babai and Frankl in~\cite{BF}. For more information on $\DCI$- and $\CI$-groups we refer the readers to the survey paper~\cite{Li}.

In this paper we are interested in abelian $\DCI$-groups. The cyclic group of order~$n$ is denoted by $C_n$. Elspas and Turner~\cite{ET} and independently Djokovi\'c~\cite{Dj} proved that every cyclic group of prime order is a $\DCI$-group. The fact that $C_{pq}$ is a $\DCI$-group for distinct primes $p$ and $q$ was proved by Alspach and Parsons in~\cite{AlPar} and independently by Klin and P\"oschel in~\cite{KP}. The complete classification of all cyclic $\DCI$-groups was obtained by Muzychuk in~\cite{M1,M2}. He proved that a cyclic group of order~$n$ is a $\DCI$-group if and only if $n=k$ or $n=2k$, where $k$ is square-free.

Denote the class of all finite abelian groups whose all Sylow subgroups are elementary abelian by $\mathcal{E}$. From~\cite[Theorem~1.1]{KM} it follows that every $\DCI$-group is the coprime product (i.e., the direct product of groups of coprime orders) of groups from the following list:
$$C_{p}^k,~C_4,~Q_8,~A_4,~H\rtimes \langle z \rangle,$$
where $p$ is a prime, $H$ is a group of odd order from $\mathcal{E}$, $|z|\in \{2,4\}$, and $h^z=h^{-1}$ for every $h\in H$. One can check that the class of $\DCI$-groups is closed under taking subgroups. So one of the  crucial steps towards the classification of all $\DCI$-groups is to determine which groups from $\mathcal{E}$ are $\DCI$. 

The following non-cyclic groups from $\mathcal{E}$ are $\DCI$-groups ($p$ and $q$ are assumed to be distinct primes): $C_p^2$~\cite{AlN,God}; $C_p^3$~\cite{AlN,Dob0}; $C_2^4$, $C_2^5$~\cite{CLi}; $C_p^4$, where $p$ is odd~\cite{HM} (a proof for $C_p^4$ with no condition on $p$ was given in~\cite{Mor}); $C_p^5$, where $p$ is odd~\cite{FK}; $C_p^2\times C_q$~\cite{KM}; $C_p^3\times C_q$~\cite{MS}; $C_p^4\times C_q$~\cite{KR2}. The smallest example of a non-$\DCI$-group from $\mathcal{E}$ was found by Nowitz~\cite{Now}. He proved that $C_2^6$ is non-$\DCI$. This implies that $C_2^n$ is non-$\DCI$ for every $n\geq 6$. Also $C_3^n$ is non-$\DCI$ for every $n\geq 8$~\cite{Sp} and $C_p^n$ is non-$\DCI$ for every prime $p$ and $n\geq 2p+3$~\cite{Som}.

In this paper we find a new infinite family of $\DCI$-groups from $\mathcal{E}$ which are close to the smallest non-$\DCI$-group from $\mathcal{E}$. The main result of the paper can be formulated as follows.

\begin{theo}\label{main}
Let $p$ be a prime. Then the group $C_2^5\times C_p$ is a $\DCI$-group if and only if $p\neq 2$.
\end{theo}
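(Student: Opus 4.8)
The forward implication is immediate: for $p=2$ the group $C_2^5\times C_2$ is just $C_2^6$, which is not a $\DCI$-group by Nowitz~\cite{Now}. So throughout I assume $p$ to be an \emph{odd} prime and aim to prove that $G:=C_2^5\times C_p$ is then a $\DCI$-group.

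The plan is to translate the $\DCI$-property into the language of Schur rings. By a criterion of Babai, $G$ is a $\DCI$-group if and only if for every Cayley digraph $\Gamma=\cay(G,S)$ all regular subgroups of $\aut(\Gamma)$ isomorphic to $G$ are conjugate in $\aut(\Gamma)$. Passing from $\Gamma$ to its Weisfeiler--Leman closure, this is equivalent to the assertion that for every Schur ring $\cA$ over $G$ each regular subgroup of $\aut(\cA)$ isomorphic to $G$ is conjugate to $G_{\r}$; in other words, every Schur ring over $G$ is a $\CI$-Schur ring. Thus the whole problem reduces to a classification-and-verification carried out over the finitely many Schur rings supported by $G$.

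Next I would exploit the coprimeness $\gcd(2^5,p)=1$. For a Schur ring $\cA$ over $G$ I would first locate the two Sylow subgroups $H\cong C_2^5$ and $P\cong C_p$ and decide when each of them is an $\cA$-subgroup; the prime order of $P$ forces its $\cA$-structure to be cyclotomic, hence very rigid, while the $H$-part is governed by the structure of Schur rings over $C_2^5$, which is a $\DCI$-group by~\cite{CLi}. Using the standard decomposition theory I would split the analysis according to whether $\cA$ is a nontrivial tensor product $\cA_H\otimes\cA_P$, a (generalized) wreath product, or indecomposable. In the decomposable cases the $\CI$-property descends from that of the smaller groups $C_2^k$ $(k\le 5)$ and $C_p$, once one has the (by now routine) lemmas stating that tensor and generalized wreath products of $\CI$-Schur rings over groups of coprime order are again $\CI$.

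The heart of the matter --- and the step I expect to be the main obstacle --- is the indecomposable Schur rings, in which the $C_p$-part and the $C_2^5$-part are genuinely entangled inside a single basic set; here the decomposition lemmas give no traction and one must argue directly with the automorphism group $K=\aut(\cA)$. The mechanism I would use is Sylow theory for the prime $p$: since $G$ is abelian, its Sylow $p$-subgroup is central and semiregular of order $p$, so given two regular copies $R_1,R_2\cong G$ I would aim to conjugate in $K$ so that their central Sylow $p$-subgroups coincide with one semiregular subgroup $P$, pass to the block system formed by the $P$-orbits, and reduce the conjugacy question to the quotient of order $2^5$, where the $\DCI$-property of $C_2^5$ applies. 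Two points make this delicate. First, even arranging that the two order-$p$ subgroups are $K$-conjugate is not automatic and needs a separate argument about how such semiregular subgroups sit inside $K$. Second, $C_2^5$ lies exactly at the boundary of the $\DCI$-property --- the very next group $C_2^6$ fails --- so one cannot afford any slack: it must be shown that no entangled Schur ring over $G$ reconstructs a forbidden $C_2^6$-type configuration, and that the passage to blocks never discards the information needed to realise the conjugating element inside $K$.
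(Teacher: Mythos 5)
Your reduction to Schur rings and your treatment of the case $p=2$ match the paper's setup (the paper uses Lemma~\ref{cimin}: it suffices that $V(K,G)$ be a $\CI$-$S$-ring for every $K\in\Sup_2^{\min}(G_\r)$; note that only this sufficiency is known, not the equivalence with the $\DCI$-property that you assert). But beyond the setup there are genuine gaps. The most serious is your claim that it is a ``by now routine'' lemma that generalized wreath products of $\CI$-$S$-rings over coprime pieces are again $\CI$. No such lemma holds: the $S$-wreath product $\mathcal{A}_U\wr_S\mathcal{A}_{G/L}$ of two $\CI$-$S$-rings is only known to be $\CI$ under additional hypotheses such as $\aut_S(\mathcal{A}_S)=\aut_U(\mathcal{A}_U)^S\aut_{G/L}(\mathcal{A}_{G/L})^S$, or normality, Cayley minimality, or $2$-minimality of the relevant quotient rings (Lemmas~\ref{cigwr}--\ref{citens}). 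Verifying one of these conditions in the critical configuration $\mathcal{A}=\mathcal{A}_U\wr_S\mathcal{A}_{G/P}$ with $S=U/P$ of order $16$ and $G/P\cong C_2^5$ is the entire content of the paper (Lemma~\ref{main3}): it requires a four-way case split on $|\O_\theta(\mathcal{A}_{G/P})|$ and rests on a computer-assisted classification of the one hundred $2$-$S$-rings over $C_2^5$ and of which of them are normal, cyclotomic, or Cayley minimal. Your sketch contains no substitute for this. Relatedly, you conflate ``$C_2^5$ is a $\DCI$-group'' with ``every $S$-ring over $C_2^5$ is a $\CI$-$S$-ring''; the paper explicitly warns that the former does not imply the latter, and must prove the latter separately (Lemma~\ref{everyci}), again computationally.

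Your proposed mechanism for the hard case --- aligning the Sylow $p$-subgroups of two $G$-regular subgroups, passing to the block system of $P$-orbits, and invoking the $\DCI$-property of the quotient --- also has unaddressed obstructions that you partly flag but do not resolve. The $P$-cosets form an $\aut(\mathcal{A})$-invariant partition only when $P$ is an $\mathcal{A}$-subgroup, which is not automatic; the paper gets this (or a workable surrogate $P_1$) from the structure theory of $S$-rings over groups of non-powerful order (Lemmas~\ref{nonpower2}--\ref{nonpower4}), which in fact shows that a nontrivial star or generalized wreath decomposition always exists --- so the ``indecomposable entangled'' case you identify as the main obstacle does not occur in that form, while the case you treat as routine is the genuinely hard one. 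Finally, lifting conjugacy from $\aut(\mathcal{A})^{G/P}$ back to $\aut(\mathcal{A})$ requires both that $\mathcal{A}_{G/P}$ be normal (so the quotient conjugation lands on $(G/P)_\r$ exactly) and that the kernel-generated subring $V(\aut(\mathcal{A})_{G/P}G_\r,\,G)$ be $\CI$ (Lemma~\ref{minnorm}); you acknowledge the delicacy but supply no argument for either condition.
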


Theorem~\ref{main} extends the results obtained in~\cite{KM,KR2,MS} which imply that the group $C_p^k\times C_q$ is a $\DCI$-group whenever $p$ and $q$ are distinct primes and $k\leq 4$. Note that the ``only if'' part of Theorem~\ref{main}, in fact, was proved by Nowitz in~\cite{Now}. The next corollary immediately follows from~\cite[Theorem~1.1]{KM} and Theorem~\ref{main}.  

\begin{corl1}
Let $p$ be a prime. Then a group $G$ of order $32p$ is a $\DCI$-group if and only if $p\neq 2$ and $G\cong C_2^5\times C_p$.
\end{corl1}

To prove Theorem~\ref{main}, we use the $S$-ring approach. An \emph{$S$-ring} over a group $G$ is a subring of the group ring $\mathbb{Z}G$ which is a free $\mathbb{Z}$-module spanned by a special partition of $G$. If every $S$-ring from a certain family of $S$-rings over $G$ is a $\CI$-$S$-ring then $G$ is a $\DCI$-group (see Section~4). The definition of an $S$-ring goes back to Schur~\cite{Schur} and Wielandt~\cite{Wi}. The usage of $S$-rings in the investigation of $\DCI$-groups was proposed by Klin and P\"oschel~\cite{KP}. Many of recent results on $\DCI$-groups were obtained with using $S$-rings (see~\cite{HM,KM,KR1,KR2,MS}). 

The text of the paper is organized in the following way. In Section~2 we provide definitions and basic facts concerned with $S$-rings. Section~3 contains a necessary information on isomorphisms of $S$-rings. In Section~4 we discuss $\CI$-$S$-rings and their relation with $\DCI$-groups. Also in this section we prove a sufficient condition of $\CI$-property for $S$-rings (Lemma~\ref{minnorm}). Section~5 is devoted to the generalized wreath and star products of $S$-rings. Here we deduce from previously obtained results two sufficient conditions for the generalized wreath product of $S$-rings to be a $\CI$-$S$-ring (Lemma~\ref{kerci} and Lemma~\ref{citens}). Section~6 and~7 are concerned with $p$-$S$-rings and $S$-rings over groups of non-powerful order respectively. In Section~8 we provide properties of $S$-rings over the groups $C_2^n$, $n\leq 5$, and prove that all $S$-rings over these groups are $\CI$. The material of this section is based on computational results obtained with the help of the GAP package COCO2P~\cite{GAP}. Finally, in Section~9 we prove Theorem~\ref{main}.
\\
\\
\\
{\bf Notation.}

Let $G$ be a finite group and $X\subseteq G$. The element $\sum \limits_{x\in X} {x}$ of the group ring $\mathbb{Z}G$ is denoted by $\underline{X}$.

The set $\{x^{-1}:x\in X\}$ is denoted by $X^{-1}$.

The subgroup of $G$ generated by $X$ is denoted by $\langle X\rangle$; we also set $\rad(X)=\{g\in G:\ gX=Xg=X\}$.


Given a set $X\subseteq G$ the set $\{(g,xg): x\in  X, g\in G\}$ of arcs of the Cayley digraph $\cay(G,X)$ is denoted by $R(X)$.

The group of all permutations of $G$ is denoted by $\sym(G)$.

The subgroup of $\sym(G)$ consisting of all right translations of $G$ is denoted by $G_{\r}$.

The set $\{K \leq \sym(G):~ K \geq G_{\r}\}$ is denoted by $\Sup(G_{\r})$.

For a set $\Delta\subseteq \sym(G)$ and a section $S=U/L$ of $G$ we set $\Delta^S=\{f^S:~f\in \Delta,~S^f=S\}$, where $S^f=S$ means that $f$ permutes the $L$-cosets in $U$ and $f^S$ denotes the bijection of $S$ induced by $f$.

If $K\leq \sym(\Omega)$ and $\alpha\in \Omega$ then the stabilizer of $\alpha$ in $K$ and the set of all orbits of $K$ on $\Omega$ are denoted by $K_{\alpha}$ and  $\orb(K,\Omega)$ respectively.

If $H\leq G$ then the normalizer of $H$ in $G$ is denoted by $N_G(H)$.

The cyclic group of order $n$ is denoted by  $C_n$.

The class of all finite abelian groups whose every Sylow subgroup is elementary abelian is denoted by $\mathcal{E}$.

\section{$S$-rings}

In this section we give a background of $S$-rings. In general, we follow~\cite{KR2}, where the most part of the material is contained. For more information on $S$-rings we refer the readers to~\cite{CP, MP1}.

Let $G$ be a finite group and $\mathbb{Z}G$ the integer group ring. Denote the identity element of $G$ by $e$. A subring  $\mathcal{A}\subseteq \mathbb{Z} G$ is called an \emph{$S$-ring (a Schur ring)} over $G$ if there exists a partition $\mathcal{S}(\mathcal{A})$ of~$G$ such that:

$(1)$ $\{e\}\in\mathcal{S}(\mathcal{A})$,

$(2)$  if $X\in\mathcal{S}(\mathcal{A})$ then $X^{-1}\in\mathcal{S}(\mathcal{A})$,

$(3)$ $\mathcal{A}=\Span_{\mathbb{Z}}\{\underline{X}:\ X\in\mathcal{S}(\mathcal{A})\}$.

\noindent The elements of $\mathcal{S}(\mathcal{A})$ are called the \emph{basic sets} of  $\mathcal{A}$ and the number $\rk(\mathcal{A})=|\mathcal{S}(\mathcal{A})|$ is called the \emph{rank} of  $\mathcal{A}$. If $X,Y \in \mathcal{S}(\mathcal{A})$ then $XY \in \mathcal{S}(\mathcal{A})$ whenever $|X|=1$ or $|Y|=1$.

Let $\mathcal{A}$ be an $S$-ring over a group $G$. A set $X \subseteq G$ is called an \emph{$\mathcal{A}$-set} if $\underline{X}\in \mathcal{A}$. A subgroup $H \leq G$ is called an \emph{$\mathcal{A}$-subgroup} if $H$ is an $\mathcal{A}$-set. From the definition it follows that the intersection of $\mathcal{A}$-subgroups is also an $\mathcal{A}$-subgroup. One can check that for each $\mathcal{A}$-set $X$ the groups  $\langle X \rangle$ and $\rad(X)$ are $\mathcal{A}$-subgroups. By the \emph{thin radical} of $\mathcal{A}$ we mean the set  defined as
$$\O_\theta(\mathcal{A})=\{ x \in G:~\{x\} \in \mathcal{S}(\mathcal{A}) \}.$$ 
It is easy to see that $\O_\theta(\mathcal{A})$ is an $\mathcal{A}$-subgroup.

\begin{lemm}~\cite[Lemma~2.1]{EKP}\label{intersection}
Let $\mathcal{A}$ be an $S$-ring over a group $G$, $H$ an $\mathcal{A}$-subgroup of $G$, and $X \in \mathcal{S}(\mathcal{A})$. Then the number $|X\cap Hx|$ does not depend on $x\in X$.
\end{lemm}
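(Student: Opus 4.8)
The plan is to reinterpret the quantity $|X\cap Hx|$ as a coefficient of a suitable product inside $\mathcal{A}$ and then exploit the fact that, by the partition axiom, the coefficient function of any element of $\mathcal{A}$ is constant on each basic set. First I would note that since $H$ is an $\mathcal{A}$-subgroup and $X\in\mathcal{S}(\mathcal{A})$, both $\underline{H}$ and $\underline{X}$ lie in $\mathcal{A}$; as $\mathcal{A}$ is a subring of $\mathbb{Z}G$, the product $\underline{H}\,\underline{X}$ again lies in $\mathcal{A}$.

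The core of the argument is to identify the coefficients of this product. Writing $\underline{H}\,\underline{X}=\sum_{g\in G}c_g\,g$, the coefficient $c_g$ counts the pairs $(h,x')\in H\times X$ with $hx'=g$. For a fixed $x'$ the element $h=gx'^{-1}$ is uniquely determined, and it lies in $H$ exactly when $x'\in H^{-1}g=Hg$, where I use that $H$ is a subgroup, so $H^{-1}=H$. Hence $c_g=|X\cap Hg|$ for every $g\in G$. This is the only step that requires genuine bookkeeping, namely keeping track of left versus right cosets and the use of $H^{-1}=H$, but no real obstacle arises.

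To finish I would invoke property $(3)$ in the definition of an $S$-ring: every element of $\mathcal{A}$ is a $\mathbb{Z}$-linear combination $\sum_{Y\in\mathcal{S}(\mathcal{A})}c_Y\,\underline{Y}$, and since the basic sets partition $G$, the coefficient function of such an element is constant on each $Y$. Applying this to $\underline{H}\,\underline{X}\in\mathcal{A}$ and to the basic set $X$ itself, I conclude that $c_x=|X\cap Hx|$ takes one and the same value for all $x\in X$, which is precisely the assertion. Thus the statement is essentially an immediate consequence of $\mathcal{A}$ being closed under multiplication together with the partition axiom, and I would expect the write-up to be short.
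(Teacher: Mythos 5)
Your argument is correct and is exactly the standard proof of this fact: the paper itself gives no proof but only cites \cite[Lemma~2.1]{EKP}, where the same computation of the coefficients of $\underline{H}\,\underline{X}$ combined with the observation that coefficient functions of elements of $\mathcal{A}$ are constant on basic sets is used. Your coset bookkeeping ($h=gx'^{-1}\in H$ iff $x'\in Hg$) is accurate, so nothing is missing.
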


Let $L \unlhd U\leq G$. A section $U/L$ is called an \emph{$\mathcal{A}$-section} if $U$ and $L$ are $\mathcal{A}$-subgroups. If $S=U/L$ is an $\mathcal{A}$-section then the module
$$\mathcal{A}_S=Span_{\mathbb{Z}}\left\{\underline{X}^{\pi}:~X\in\mathcal{S}(\mathcal{A}),~X\subseteq U\right\},$$
where $\pi:U\rightarrow U/L$ is the canonical epimorphism, is an $S$-ring over $S$.

\section{Isomorphisms and schurity}

Let  $\mathcal{A}$  and $\mathcal{A}^{\prime}$ be $S$-rings over groups $G$  and $G^{\prime}$ respectively. A bijection $f:G\rightarrow G^{\prime}$ is called an \emph{isomorphism} from $\mathcal{A}$ to $\mathcal{A}^{\prime}$ if
$$\{R(X)^f: X\in \mathcal{S}(\mathcal{A})\}=\{R(X^{\prime}): X^{\prime}\in \mathcal{S}(\mathcal{A}^{\prime})\},$$
where $R(X)^f=\{(g^f,~h^f):~(g,~h)\in R(X)\}$. If there exists an isomorphism from $\mathcal{A}$ to $\mathcal{A}^{\prime}$ then we say that $\mathcal{A}$ and $\mathcal{A}^{\prime}$ are \emph{isomorphic} and write $\mathcal{A}\cong \mathcal{A}^{\prime}$. 

The group  of all isomorphisms from $\mathcal{A}$ onto itself contains a normal subgroup
$$\{f\in \sym(G): R(X)^f=R(X)~\text{for every}~X\in \mathcal{S}(\mathcal{A})\}$$
called the \emph{automorphism group} of $\mathcal{A}$ and denoted by $\aut(\mathcal{A})$. The definition implies that $G_{\r}\leq \aut(\mathcal{A})$. The $S$-ring $\mathcal{A}$ is called \emph{normal} if $G_{\r}$ is normal in $\aut(\mathcal{A})$. One can verify that if $S$ is an $\mathcal{A}$-section then $\aut(\mathcal{A})^S\leq \aut(\mathcal{A}_S)$. Denote the group $\aut(\mathcal{A})\cap \aut(G)$ by $\aut_G(\mathcal{A})$. It easy to check that if $S$ is an $\mathcal{A}$-section then $\aut_G(\mathcal{A})^S\leq \aut_S(\mathcal{A}_S)$. One can verify that
$$\aut_G(\mathcal{A})=N_{\aut(\mathcal{A})}(G_{\r})_e.$$

Let $K\in \Sup(G_{\r})$. Schur proved in~\cite{Schur} that the $\mathbb{Z}$-submodule
$$V(K,G)=\Span_{\mathbb{Z}}\{\underline{X}:~X\in \orb(K_e,~G)\},$$
is an $S$-ring over $G$. An $S$-ring $\mathcal{A}$ over  $G$ is called \emph{schurian} if $\mathcal{A}=V(K,G)$ for some $K\in \Sup(G_{\r})$. One can verify that given $K_1,K_2\in \Sup(G_{\r})$,
$$\text{if}~K_1\leq K_2~\text{then}~V(K_1,G)\geq V(K_2,G).~\eqno(1)$$
If $\mathcal{A}=V(K,G)$ for some $K\in \Sup(G_{\r})$ and $S$ is an $\mathcal{A}$-section then $\mathcal{A}_S=V(K^S,G)$. So if $\mathcal{A}$ is schurian then $\mathcal{A}_S$ is also schurian for every $\mathcal{A}$-section $S$. It can be checked that 
$$V(\aut(\mathcal{A}),G)\geq \mathcal{A}~\eqno(2)$$
and the equality is attained if and only if $\mathcal{A}$ is schurian.

An $S$-ring $\mathcal{A}$ over a group $G$ is defined to be \emph{cyclotomic} if there exists $K\leq\aut(G)$ such that $\mathcal{S}(\mathcal{A})=\orb(K,G)$. In this case we write $\mathcal{A}=\cyc(K,G)$. Obviously, $\mathcal{A}=V(G_{\r}K,G)$. So every cyclotomic $S$-ring is schurian. If $\mathcal{A}=\cyc(K,G)$ for some $K\leq \aut(G)$ and $S$ is an $\mathcal{A}$-section then $\mathcal{A}_S=\cyc(K^S,G)$. Therefore if $\mathcal{A}$ is cyclotomic then $\mathcal{A}_S$ is also cyclotomic for every $\mathcal{A}$-section $S$. 

Two permutation groups $K_1$ and $K_2$ on a set $\Omega$ are called \emph{$2$-equivalent} if $\orb(K_1,\Omega^2)=\orb(K_2,\Omega^2)$ (here we assume that $K_1$ and $K_2$ act on $\Omega^2$ componentwise). In this case we write $K_1\approx_2 K_2$. The relation $\approx_2$ is an equivalence relation on the set of all subgroups of $\sym(\Omega)$. Every equivalence class has a unique maximal element. Given $K\leq \sym(\Omega)$, this element is called the \emph{$2$-closure} of $K$ and denoted by $K^{(2)}$. If $\mathcal{A}=V(K,G)$ for some $K\in \Sup(G_{\r})$ then $K^{(2)}=\aut(\mathcal{A})$. An $S$-ring $\mathcal{A}$ over $G$ is called \emph{$2$-minimal} if
$$\{K\in \Sup(G_{\r}):~K\approx_2 \aut(\mathcal{A})\}=\{\aut(\mathcal{A})\}.$$

Two groups $K_1,K_2\leq \aut(G)$ are said to be \emph{Cayley equivalent} if $\orb(K_1,G)=\orb(K_2,G)$. In this case we write $K_1\approx_{\cay} K_2$. If $\mathcal{A}=\cyc(K,G)$ for some $K\leq \aut(G)$ then $\aut_G(\mathcal{A})$ is the largest group which is Cayley equivalent to $K$. A cyclotomic $S$-ring $\mathcal{A}$ over $G$ is called \emph{Cayley minimal} if
$$\{K\leq \aut(G):~K\approx_{\cay} \aut_G(\mathcal{A})\}=\{\aut_G(\mathcal{A})\}.$$
It is easy to see that $\mathbb{Z}G$ is Cayley minimal.

\section{$\CI$-$S$-rings}

Let $\mathcal{A}$ be an $S$-ring over a group $G$. Put 
$$\iso(\mathcal{A})=\{f\in \sym(G):~\text{f is an isomorphism from}~\mathcal{A}~\text{onto}~\text{$S$-ring over}~G\}.$$
One can see that $\aut(\mathcal{A})\aut(G)\subseteq \iso(\mathcal{A})$. However, the converse inclusion does not hold in general. The $S$-ring $\mathcal{A}$ is defined to be a \emph{$\CI$-$S$-ring} if $\aut(\mathcal{A})\aut(G)=\iso(\mathcal{A})$. It is easy to check that $\mathbb{Z}G$ and the $S$-ring of rank~$2$ over $G$ are $\CI$-$S$-rings.  

Put   
$$\Sup_2(G_\r)=\{K \in \Sup(G_\r):~ K^{(2)}=K\}.$$
The group $M\leq \sym(G)$ is said to be \emph{$G$-regular} if $M$ is regular and isomorphic to $G$. Following~\cite{HM}, we say that a group $K \in \Sup(G_\r)$ is \emph{$G$-transjugate} if every $G$-regular subgroup of $K$ is $K$-conjugate to $G_{\r}$. Babai proved in~\cite{Babai} the statement which can be formulated in our terms as follows: a set $S\subseteq G$ is a $\CI$-subset if and only if the group $\aut(\cay(G,S))$ is $G$-transjugate. The next lemma provides a similar criterion for a schurian $S$-ring to be $\CI$.

\begin{lemm}\label{trans}
Let $K \in \Sup_2(G_\r)$ and $\mathcal{A}=V(K,G)$. Then $\mathcal{A}$ is a $\CI$-$S$-ring if and only if  $K$ is $G$-transjugate. 
\end{lemm}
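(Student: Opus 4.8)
The plan is to describe $\mathcal{A}=V(K,G)$ through the $2$-orbits of $K$ and then translate $S$-ring isomorphisms into conjugacies of automorphism groups. Since $K\in\Sup_2(G_\r)$ is $2$-closed and $\mathcal{A}=V(K,G)$, the facts recorded in the excerpt give $\aut(\mathcal{A})=K^{(2)}=K$; moreover the arc sets $R(X)$, $X\in\mathcal{S}(\mathcal{A})$, are exactly the orbits of $K$ on $G\times G$ (the orbital of $(e,x)$ being $R(x^{K_e})$). This is the bridge I would exploit: applying any $f\in\sym(G)$ componentwise carries the orbits of $K$ on $G\times G$ onto the orbits of the conjugate $f^{-1}Kf$. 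Hence $f$ is an isomorphism from $\mathcal{A}$ onto an $S$-ring over $G$ if and only if $G_\r\leq f^{-1}Kf$ (the $2$-closedness of $f^{-1}Kf$ being automatic), in which case the image is $\mathcal{A}'=V(f^{-1}Kf,G)$ and $\aut(\mathcal{A}')=f^{-1}\aut(\mathcal{A})f$. Because $\aut(\mathcal{A})\aut(G)\subseteq\iso(\mathcal{A})$ always holds, the whole statement reduces to the reverse inclusion, which I will match with transjugacy.

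For the ``if'' direction, assume $K$ is $G$-transjugate and take $f\in\iso(\mathcal{A})$. By the bridge $\aut(\mathcal{A}')=f^{-1}Kf$ for its image $\mathcal{A}'$, and since $G_\r\leq\aut(\mathcal{A}')$ for every $S$-ring, the conjugate $fG_\r f^{-1}$ is a $G$-regular subgroup of $K$. Transjugacy yields $h\in K$ with $h^{-1}(fG_\r f^{-1})h=G_\r$, so $h^{-1}f\in N_{\sym(G)}(G_\r)$. Using that this normalizer is the holomorph $G_\r\rtimes\aut(G)$ (with $\aut(G)$ the stabilizer of $e$, as reflected by $\aut_G(\mathcal{A})=N_{\aut(\mathcal{A})}(G_\r)_e$), I would write $h^{-1}f=t\sigma$ with $t\in G_\r$ and $\sigma\in\aut(G)$. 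Then $f=(ht)\sigma$ with $ht\in K=\aut(\mathcal{A})$, so $f\in\aut(\mathcal{A})\aut(G)$, proving $\mathcal{A}$ is a $\CI$-$S$-ring.

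For the ``only if'' direction, assume $\mathcal{A}$ is $\CI$ and let $M\leq K$ be $G$-regular. Since any two regular subgroups of $\sym(G)$ isomorphic to $G$ are conjugate in $\sym(G)$, choose $f$ with $f^{-1}Mf=G_\r$. Then $G_\r=f^{-1}Mf\leq f^{-1}Kf$, so by the bridge $f\in\iso(\mathcal{A})$. The $\CI$-hypothesis gives $f=h\sigma$ with $h\in\aut(\mathcal{A})=K$ and $\sigma\in\aut(G)$; as $\sigma$ normalizes $G_\r$, we obtain $M=fG_\r f^{-1}=hG_\r h^{-1}$, that is, $M$ is $K$-conjugate to $G_\r$. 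Hence $K$ is $G$-transjugate.

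The main obstacle is the bridge in the first paragraph, namely the exact correspondence between isomorphisms of schurian $S$-rings and conjugacies of their $2$-closed automorphism groups. Getting it right hinges on $K^{(2)}=K$, so that the relations $R(X)$ are precisely the $2$-orbits of $\aut(\mathcal{A})$; once this is in place, both implications are short. Two standard auxiliary facts must also be invoked cleanly: that all $G$-regular subgroups of $\sym(G)$ are $\sym(G)$-conjugate (used in the ``only if'' part), and the holomorph description of $N_{\sym(G)}(G_\r)$ together with $\aut(G)$ being its point stabilizer and normalizing $G_\r$ (used in both parts).
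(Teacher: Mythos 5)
Your argument is correct, but it is worth noting that the paper does not actually prove this lemma: its ``proof'' is a one-line citation of Theorem~2.6 of Hirasaka--Muzychuk \cite{HM}, so you have supplied the content that the paper outsources. Your route is the standard Babai-style one and all the pivots hold up: since $K=K^{(2)}$ the paper's own remark gives $\aut(\mathcal{A})=K$, and the sets $R(X)$, $X\in\mathcal{S}(\mathcal{A})$, are exactly the orbitals of $K$ (write $K=K_eG_{\r}$ to see that the orbital of $(e,x)$ is $R(x^{K_e})$); conjugation commutes with taking $2$-closures, so $f\in\iso(\mathcal{A})$ if and only if $G_{\r}\leq f^{-1}Kf$, in which case the image is $V(f^{-1}Kf,G)$ with automorphism group $f^{-1}Kf$. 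From there the two implications are exactly as you say, using $N_{\sym(G)}(G_{\r})=G_{\r}\rtimes\aut(G)$ for the ``if'' part and the $\sym(G)$-conjugacy of isomorphic regular subgroups for the ``only if'' part. What your version buys is a self-contained proof inside the paper's own notational framework ($\iso$, $\Sup_2$, $2$-closure); what the citation buys is brevity and the slightly more general statement in \cite{HM}, which is formulated for Cayley objects rather than only for transitivity modules of $2$-closed overgroups of $G_{\r}$.
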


\begin{proof}
The statement of the lemma follows from~\cite[Theorem~2.6]{HM}.
\end{proof}

Let $K_1, K_2\in \Sup(G_\r)$ such that $K_1 \le K_2$. Then $K_1$ is called a \emph{$G$-complete  
subgroup} of $K_2$ if every $G$-regular subgroup of $K_2$ is $K_2$-conjugate to some $G$-regular subgroup of $K_1$ (see \cite[Definition~2]{HM}). In this case we write $K_1 \preceq_G K_2$. The relation $\preceq_G$ is a partial order on $\Sup(G_\r)$. The set of the minimal elements of $\Sup_2(G_\r)$ with respect to $\preceq_G$ is denoted by $\Sup_2^{\min}(G_\r)$.

\begin{lemm}\cite[Lemma~3.3]{KR2}\label{cimin}
Let $G$ be a finite group. If $V(K, G)$ is a $\CI$-$S$-ring for every $K \in \Sup_2^{\rm min}(G_\r)$ then $G$ is a $\DCI$-group.
\end{lemm}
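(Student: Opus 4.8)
The plan is to deduce the $\DCI$-property from Babai's criterion (recalled above) together with Lemma~\ref{trans}, the relation $\preceq_G$, and the hypothesis on minimal groups. By Babai's criterion it suffices to prove that for every $S\subseteq G$ the group $\aut(\cay(G,S))$ is $G$-transjugate. First I would record the observation that makes the $S$-ring machinery applicable: since $G_\r\le\aut(\cay(G,S))$ and the arc set $R(S)$ is preserved setwise by $K:=\aut(\cay(G,S))$, it is a union of orbits of $K$ on $G\times G$; hence every element of $K^{(2)}$ preserves $R(S)$, so $K^{(2)}\le\aut(\cay(G,S))=K$, and with the general inclusion $K\le K^{(2)}$ we get $K=K^{(2)}\in\Sup_2(G_\r)$. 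It is therefore enough to show that every $K\in\Sup_2(G_\r)$ is $G$-transjugate, for then applying this to $K=\aut(\cay(G,S))$ and invoking Babai's criterion yields that each $S$ is a $\CI$-subset, i.e.\ that $G$ is a $\DCI$-group.

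The core is a propagation step, asserting that $G$-transjugacy is inherited upward along $\preceq_G$. Concretely, I would prove that if $K_0\preceq_G K$ and $K_0$ is $G$-transjugate, then so is $K$: given a $G$-regular subgroup $M\le K$, the definition of $\preceq_G$ provides a $G$-regular subgroup $M_0\le K_0$ that is $K$-conjugate to $M$; since $K_0$ is $G$-transjugate, $M_0$ is $K_0$-conjugate, hence $K$-conjugate, to $G_\r$, and therefore so is $M$. Next, fix $K\in\Sup_2(G_\r)$. As $\Sup_2(G_\r)$ is finite and $\preceq_G$ is a partial order on it, the nonempty set $\{K'\in\Sup_2(G_\r):K'\preceq_G K\}$ has a minimal element $K_0$; transitivity of $\preceq_G$ shows that $K_0$ is in fact minimal in all of $\Sup_2(G_\r)$, that is, $K_0\in\Sup_2^{\rm min}(G_\r)$. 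By hypothesis $V(K_0,G)$ is a $\CI$-$S$-ring, so Lemma~\ref{trans} gives that $K_0$ is $G$-transjugate, and the propagation step then yields that $K$ is $G$-transjugate, as required.

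The steps requiring the most care are the two that bridge the single-digraph and $S$-ring settings. The first is the $2$-closure observation above, which legitimizes replacing Babai's criterion for $\cay(G,S)$ by Lemma~\ref{trans} for the schurian $S$-ring $V(K,G)$; without knowing that $\aut(\cay(G,S))$ is already $2$-closed one would only reach its $2$-closure, and the transfer of $G$-transjugacy between a group and its $2$-closure is not automatic. The second, and the main obstacle, is the propagation lemma: one must verify that $\preceq_G$ records enough conjugacy information for $G$-transjugacy to climb from the minimal elements to an arbitrary $K$, and that a minimal element below $K$ is globally minimal, so that the hypothesis on $\Sup_2^{\rm min}(G_\r)$ actually applies. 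Everything else is bookkeeping with the definitions of $\Sup_2(G_\r)$, $\preceq_G$, and $G$-transjugacy.
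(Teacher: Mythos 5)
Your proof is correct, and it reconstructs essentially the argument behind the cited result \cite[Lemma~3.3]{KR2} (the paper itself only gives the reference): Babai's criterion, the $2$-closedness of $\aut(\cay(G,S))$, descent to a minimal element of $\Sup_2(G_\r)$ under $\preceq_G$, Lemma~\ref{trans} applied there, and the upward propagation of $G$-transjugacy along $\preceq_G$. The two points you flag as delicate (that $\aut(\cay(G,S))=\aut(\cay(G,S))^{(2)}$, and that a $\preceq_G$-minimal element below $K$ is globally minimal) are exactly the right ones and are handled correctly.
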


\begin{rem1}
The condition that $V(K, G)$ is a $\CI$-$S$-ring for every $K \in \Sup_2^{\rm min}(G_\r)$ is equivalent to, say, that every schurian $S$-ring over $G$ is a $\CI$-$S$-ring. However, it is not known whether the statement converse to Lemma~\ref{cimin} is true.  
\end{rem1}

We finish the subsection with the lemma that gives a sufficient condition for an $S$-ring to be a $\CI$-$S$-ring. In order to formulate this condition, we need to introduce some further notations. Let $\mathcal{A}$ be a schurian $S$-ring over an abelian group $G$ and $L$ a normal $\mathcal{A}$-subgroup of $G$. Then the partition of $G$ into the $L$-cosets is $\aut(\mathcal{A})$-invariant. The kernel of the action of $\aut(\mathcal{A})$ on the latter cosets is denoted by $\aut(\mathcal{A})_{G/L}$. Since $\aut(\mathcal{A})_{G/L}$ is a normal subgroup of $\aut(\mathcal{A})$, we can form the group $K=\aut(\mathcal{A})_{G/L}G_{\r}$. Clearly, $K\leq\aut(\mathcal{A})$. From~\cite[Proposition~2.1]{HM} it follows that $K=K^{(2)}$.

\begin{lemm}\label{minnorm}
Let $\mathcal{A}$ be a schurian $S$-ring over an abelian group $G$, $L$ an $\mathcal{A}$-subgroup of $G$, and $K=\aut(\mathcal{A})_{G/L}G_{\r}$. Suppose that both $\mathcal{A}_{G/L}$ and $V(K, G)$ are $\CI$-$S$-rings and $\mathcal{A}_{G/L}$ is normal. Then $\mathcal{A}$ is a $\CI$-S-ring.
\end{lemm}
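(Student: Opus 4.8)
The plan is to use the criterion of Lemma~\ref{trans}. Since $\mathcal{A}$ is schurian we have $\mathcal{A}=V(\aut(\mathcal{A}),G)$ with $\aut(\mathcal{A})=\aut(\mathcal{A})^{(2)}\in\Sup_2(G_{\r})$, so it suffices to prove that $\aut(\mathcal{A})$ is $G$-transjugate, i.e. that every $G$-regular subgroup $M\leq\aut(\mathcal{A})$ is $\aut(\mathcal{A})$-conjugate to $G_{\r}$. First I would record the description of $K$: since $L$ is an $\mathcal{A}$-subgroup, every element of $\aut(\mathcal{A})$ preserves the partition of $G$ into $L$-cosets, and $\aut(\mathcal{A})_{G/L}$ is exactly the kernel of the induced action on $G/L$; hence $K=\aut(\mathcal{A})_{G/L}G_{\r}=\{h\in\aut(\mathcal{A}):\ h^{G/L}\in(G/L)_{\r}\}$ is the full preimage of $(G/L)_{\r}$ under $h\mapsto h^{G/L}$.

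Next I would analyse the action of $M$ on the block system of $L$-cosets. As $M$ is abelian and transitive on the blocks, all setwise block-stabilizers in $M$ coincide and equal $K_0=M\cap\aut(\mathcal{A})_{G/L}$, a subgroup of order $|L|$; consequently $\bar{M}:=M^{G/L}$ is a regular subgroup of $\aut(\mathcal{A})^{G/L}\leq\aut(\mathcal{A}_{G/L})$. The goal of this stage is to show $\bar{M}=(G/L)_{\r}$. Granting that $\bar{M}\cong G/L$, the $S$-ring $\mathcal{A}_{G/L}$ is schurian and a $\CI$-$S$-ring, so by Lemma~\ref{trans} its automorphism group is $(G/L)$-transjugate and $\bar{M}$ is conjugate to $(G/L)_{\r}$ inside $\aut(\mathcal{A}_{G/L})$; since $\mathcal{A}_{G/L}$ is normal, $(G/L)_{\r}$ is normal in $\aut(\mathcal{A}_{G/L})$ and hence coincides with every one of its conjugates, forcing $\bar{M}=(G/L)_{\r}$. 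By the description of $K$ this gives $M\leq K$.

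Finally, $M$ is a $G$-regular subgroup of $K$. As $V(K,G)$ is a $\CI$-$S$-ring and $K=K^{(2)}\in\Sup_2(G_{\r})$, Lemma~\ref{trans} shows that $K$ is $G$-transjugate, so $M$ is $K$-conjugate, a fortiori $\aut(\mathcal{A})$-conjugate, to $G_{\r}$; this yields the transjugacy of $\aut(\mathcal{A})$ and hence the $\CI$-property of $\mathcal{A}$. The hard part will be the middle stage: one must know that the quotient $\bar{M}$ is isomorphic to $G/L$ before the transjugacy of $\mathcal{A}_{G/L}$ can be applied to it (this is automatic when $G\in\mathcal{E}$, since then a quotient of $M\cong G$ is determined up to isomorphism by its order), and one must exploit the normality of $\mathcal{A}_{G/L}$ to upgrade the mere conjugacy of $\bar{M}$ to $(G/L)_{\r}$ into the equality $\bar{M}=(G/L)_{\r}$. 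It is precisely this equality that lets the conclusion be descended from the quotient back to $G$ without disturbing the second, kernel-level conjugation supplied by the $\CI$-property of $V(K,G)$.
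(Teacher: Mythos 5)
Your proposal is correct and follows essentially the same route as the paper: use normality together with the CI-property of $\mathcal{A}_{G/L}$ to force the image of any $G$-regular subgroup in $\sym(G/L)$ to equal $(G/L)_{\r}$ (so that the subgroup lands in $K$), then finish with Lemma~\ref{trans} applied to $V(K,G)$. The only cosmetic difference is that the paper routes the first step through the relation $K\preceq_G\aut(\mathcal{A})$, while you get $M\leq K$ directly; your remark that $M^{G/L}\cong G/L$ needs justification is a subtlety the paper's proof also relies on (and which holds in all of its applications).
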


\begin{proof}
Firstly we prove that the group $\aut(\mathcal{A})^{G/L}$ is $G/L$-transjugate. Suppose that $F$ is a $G/L$-regular subgroup of $\aut(\mathcal{A})^{G/L}$. The $S$-ring  $\mathcal{A}_{G/L}$ is a $\CI$-$S$-ring by the assumption of the lemma. So Lemma~\ref{trans} implies that the group $\aut(\mathcal{A}_{G/L})$ is $G/L$-transjugate. Since $F\leq \aut(\mathcal{A})^{G/L}\leq \aut(\mathcal{A}_{G/L})$, we conclude that $F$ and $(G/L)_{\r}$ are $\aut(\mathcal{A}_{G/L})$-conjugate. However, $\mathcal{A}_{G/L}$ is normal and hence $F=(G/L)_{\r}$. Therefore $\aut(\mathcal{A})^{G/L}$ is $G/L$-transjugate. 

Now let us show that $K\preceq_G \aut(\mathcal{A})$. Let $H$ be a $G$-regular subgroup of $\aut(\mathcal{A})$. Then $H^{G/L}$ is abelian transitive subgroup of $\aut(\mathcal{A})^{G/L}$ and hence $H^{G/L}$ is regular on $G/L$. Therefore $H^{G/L}\cong (G/L)_{\r}=(G_{\r})^{G/L}$. There exists $\gamma\in \aut(\mathcal{A})$ such that $(H^{G/L})^{\gamma^{G/L}}=(G/L)_{\r}=(G_{right})^{G/L}$ because $\aut(\mathcal{A})^{G/L}$ is $G/L$-transjugate. This yields that $H^{\gamma}\leq K$. Thus, $K\preceq_G \aut(\mathcal{A})$. 

Finally, prove that $\aut(\mathcal{A})$ is $G$-transjugate. Again, let $H$ be a $G$-regular subgroup of $\aut(\mathcal{A})$. Since $K\preceq_G \aut(\mathcal{A})$, there exists $\gamma\in\aut(\mathcal{A})$ such that $H^{\gamma}\leq K$. The $S$-ring $V(K, G)$ is a $\CI$-$S$-ring by the assumption of the lemma. So $K$ is $G$-transjugate by Lemma~\ref{trans}. Therefore $H^{\gamma}$ and $G_{\r}$ are $K$-conjugate  and hence $H$ and $G_{\r}$ are $\aut(\mathcal{A})$-conjugate. Thus, $\aut(\mathcal{A})$ is $G$-transjugate and $\mathcal{A}$ is a $\CI$-$S$-ring by Lemma~\ref{trans}. The lemma is proved. 
\end{proof}

It should be mentioned that the proof of Lemma~\ref{minnorm} is quite similar to the proof of~\cite[Lemma~3.6]{KR2}.

\section{Generalized wreath and star products}

Let $\mathcal{A}$ be an $S$-ring over a group $G$ and $S=U/L$ an $\mathcal{A}$-section of $G$. An $S$-ring~$\mathcal{A}$ is called the \emph{$S$-wreath product} or the \emph{generalized wreath product} of $\mathcal{A}_U$ and $\mathcal{A}_{G/L}$ if $L\trianglelefteq G$ and $L\leq\rad(X)$ for each basic set $X$ outside~$U$. In this case we write $\mathcal{A}=\mathcal{A}_U\wr_{S}\mathcal{A}_{G/L}$ and omit $S$ when $U=L$. The construction of the generalized wreath product of $S$-rings was introduced  in~\cite{EP}. 

The $S$-wreath product is called \emph{nontrivial} or \emph{proper}  if $L\neq \{e\}$ and $U\neq G$. An $S$-ring $\mathcal{A}$ is said to be \emph{decomposable} if $\mathcal{A}$ is the nontrivial $S$-wreath product for some $\mathcal{A}$-section $S$ of $G$; otherwise $\mathcal{A}$ is said to be indecomposable. We say that an $\mathcal{A}$-subgroup $U<G$ has a \emph{gwr-complement} with respect to $\mathcal{A}$ if there exists a nontrivial normal $\mathcal{A}$-subgroup $L$ of $G$ such that $L\leq U$ and $\mathcal{A}$ is the $S$-wreath product, where $S=U/L$.

\begin{lemm}\cite[Theorem~1.1]{KR1}\label{cigwr}
Let $G\in \mathcal{E}$, $\mathcal{A}$ an $S$-ring over $G$, and $S=U/L$ an $\mathcal{A}$-section of $G$. Suppose that $\mathcal{A}$ is the nontrivial $S$-wreath product and the $S$-rings $\mathcal{A}_U$ and $\mathcal{A}_{G/L}$ are $\CI$-$S$-rings. Then $\mathcal{A}$ is a $CI$-$S$-ring whenever 
$$\aut_{S}(\mathcal{A}_{S})=\aut_U(\mathcal{A}_U)^{S}\aut_{G/L}(\mathcal{A}_{G/L})^{S}.$$ 
In particular,  $\mathcal{A}$ is a $\CI$-$S$-ring if $\aut_{S}(\mathcal{A}_{S})=\aut_U(\mathcal{A}_U)^{S}$ or $\aut_{S}(\mathcal{A}_{S})=\aut_{G/L}(\mathcal{A}_{G/L})^{S}$.
\end{lemm}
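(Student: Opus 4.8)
The plan is to establish the $\CI$-property via the transjugacy criterion of Lemma~\ref{trans} rather than to argue with isomorphisms directly. First I would record that $\mathcal{A}$ is schurian: the factors $\mathcal{A}_U$ and $\mathcal{A}_{G/L}$ are schurian $\CI$-$S$-rings, and the generalized wreath product of schurian $S$-rings is again schurian, so $K=\aut(\mathcal{A})$ lies in $\Sup_2(G_{\r})$ and $\mathcal{A}=V(K,G)$. By Lemma~\ref{trans} it then suffices to show that $K$ is $G$-transjugate, that is, that every $G$-regular subgroup $H\leq\aut(\mathcal{A})$ is $\aut(\mathcal{A})$-conjugate to $G_{\r}$. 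The hypothesis $G\in\mathcal{E}$ is used throughout to keep projections and restrictions of the abelian regular group $H$ regular.

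Next I would pass to the quotient. Since $L\trianglelefteq G$ and $L\leq\rad(X)$ for every basic set $X$ outside $U$, the partition of $G$ into $L$-cosets is $\aut(\mathcal{A})$-invariant and $\aut(\mathcal{A})^{G/L}\leq\aut(\mathcal{A}_{G/L})$. The image $H^{G/L}$ is abelian and transitive, hence a regular subgroup isomorphic to $(G/L)_{\r}$. As $\mathcal{A}_{G/L}$ is a $\CI$-$S$-ring, Lemma~\ref{trans} gives that $\aut(\mathcal{A}_{G/L})$ is $(G/L)$-transjugate, so $H^{G/L}$ is conjugate to $(G/L)_{\r}$; invoking the structural description of the automorphism group of a generalized wreath product, this conjugation lifts to an element of $\aut(\mathcal{A})$, after which I may assume $H^{G/L}=(G/L)_{\r}$.

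I would then deal with the part below $U$. The preimage in $H$ of $(U/L)_{\r}$ is a subgroup $H_U\leq H$ of order $|U|$ that leaves $U$ invariant, and whose restriction $H_U^{U}$ is a $U$-regular subgroup of $\aut(\mathcal{A}_U)$. Since $\mathcal{A}_U$ is a $\CI$-$S$-ring, $\aut(\mathcal{A}_U)$ is $U$-transjugate, so $H_U^{U}$ is conjugate to $U_{\r}$, and this correction should again be realizable inside $\aut(\mathcal{A})$ without disturbing the already normalized action on $G/L$. At this point $H$ agrees with $G_{\r}$ both modulo $L$ and after restriction to $U$, so that the only residual discrepancy is recorded on the overlap section $S=U/L$ by a single element of $\aut_S(\mathcal{A}_S)$.

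The hard part will be this final gluing on $S$: the correction of the $G/L$-action and the correction of the $U$-action interact precisely on the shared section, and there is no reason for them to be simultaneously realizable by one element of $\aut(\mathcal{A})$ unless their effects on $S$ can be reconciled. This is exactly the role of the hypothesis $\aut_S(\mathcal{A}_S)=\aut_U(\mathcal{A}_U)^{S}\aut_{G/L}(\mathcal{A}_{G/L})^{S}$: it lets me factor the residual section automorphism as the $S$-restriction of a Cayley automorphism of $\mathcal{A}_U$ times that of $\mathcal{A}_{G/L}$, and these two factors lift along the wreath structure to the automorphism of $\mathcal{A}$ that completes the conjugation of $H$ onto $G_{\r}$. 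The two ``in particular'' statements are then immediate, because both $\aut_U(\mathcal{A}_U)^{S}$ and $\aut_{G/L}(\mathcal{A}_{G/L})^{S}$ are contained in $\aut_S(\mathcal{A}_S)$, so if either already equals $\aut_S(\mathcal{A}_S)$ the required factorization holds automatically. Making the lifting statements precise through the structure of $\aut$ of a generalized wreath product is, to my mind, the genuine technical core of the proof.
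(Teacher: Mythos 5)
First, note that the paper does not prove this statement at all: it is quoted verbatim from \cite[Theorem~1.1]{KR1}, so there is no internal proof to compare against, and your task was in effect to reprove an external theorem. Judged on its own terms, your sketch has a reasonable overall shape (normalize a $G$-regular subgroup $H\leq\aut(\mathcal{A})$ first modulo $L$, then on $U$, then reconcile the two corrections on the shared section $S$), but it rests on one false step and leaves the genuinely hard steps unproved. The false step is the opening claim that the generalized wreath product of schurian $S$-rings is again schurian; this fails in general --- the classical non-schurian $S$-rings over abelian (even cyclic) groups of \cite{EP} arise precisely as generalized wreath products of schurian factors, and in this very paper Lemma~\ref{kerwreath} must invoke \cite[Corollary~10.3]{MP2} together with a $2$-minimality hypothesis just to get schurity of $\mathbb{Z}U\wr_{S}\mathbb{Z}(G/L)$. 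Since the lemma does not assume $\mathcal{A}$ schurian, your reduction to the transjugacy criterion of Lemma~\ref{trans} is not available as stated.

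Second, the two ``lifting'' assertions are exactly where the content of the theorem lies, and they do not follow from any structural description you can safely invoke. One only has the inclusions $\aut(\mathcal{A})^{G/L}\leq\aut(\mathcal{A}_{G/L})$ and $\aut(\mathcal{A})^{U}\leq\aut(\mathcal{A}_U)$, and these are in general proper, so a conjugating element produced inside $\aut(\mathcal{A}_{G/L})$ by the $\CI$-property of the quotient need not be induced by any element of $\aut(\mathcal{A})$. This is precisely the obstruction that forces the paper's Lemma~\ref{minnorm} to assume $\mathcal{A}_{G/L}$ \emph{normal} (so that the conjugation is vacuous and $F=(G/L)_{\r}$ outright) and Lemma~\ref{kerci} to assume $2$-minimality or normality; your phrase ``this conjugation lifts to an element of $\aut(\mathcal{A})$'' asserts away the main difficulty. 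Finally, you explicitly defer the gluing on $S$ --- the step in which the hypothesis $\aut_{S}(\mathcal{A}_{S})=\aut_U(\mathcal{A}_U)^{S}\aut_{G/L}(\mathcal{A}_{G/L})^{S}$ is actually consumed --- calling it the technical core without carrying it out. The derivation of the two ``in particular'' clauses from the main hypothesis is correct, but as a proof of \cite[Theorem~1.1]{KR1} the proposal is an outline with a wrong lemma at its base and its essential steps missing.
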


\begin{lemm}\cite[Proposition~4.1]{KR1}\label{trivial}
In the conditions of Lemma~\ref{cigwr}, suppose that $\mathcal{A}_S=\mathbb{Z}S$. Then $\mathcal{A}$ is a $\CI$-$S$-ring. In particular, if $U=L$ then $\mathcal{A}$ is a $\CI$-$S$-ring.
\end{lemm}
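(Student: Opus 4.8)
The plan is to derive this as a special case of Lemma~\ref{cigwr}. All the hypotheses of Lemma~\ref{cigwr} are already in force here ($G\in\mathcal{E}$, the nontrivial $S$-wreath decomposition, and the $\CI$-property of $\mathcal{A}_U$ and $\mathcal{A}_{G/L}$), so it suffices to verify the group-theoretic condition $\aut_S(\mathcal{A}_S)=\aut_U(\mathcal{A}_U)^S\aut_{G/L}(\mathcal{A}_{G/L})^S$; in fact I would aim for the stronger equality $\aut_S(\mathcal{A}_S)=\aut_U(\mathcal{A}_U)^S$ and invoke the ``in particular'' clause of that lemma. The whole point is that the extra hypothesis $\mathcal{A}_S=\mathbb{Z}S$ forces the relevant permutation groups over $S$ to collapse to the identity.

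The key step is the computation $\aut_S(\mathcal{A}_S)=\{\id\}$. As recorded in Section~3, a group automorphism $\phi\in\aut(S)$ lies in $\aut(\mathcal{A}_S)$ precisely when it preserves every basic set of $\mathcal{A}_S$ setwise. When $\mathcal{A}_S=\mathbb{Z}S$ the basic sets are exactly the singletons $\{s\}$, $s\in S$, so such a $\phi$ must fix every element of $S$ and hence equals $\id$. Therefore $\aut_S(\mathcal{A}_S)=\{\id\}$.

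Next I would combine this with the restriction inclusion from Section~3: since $S=U/L$ is an $\mathcal{A}_U$-section with $(\mathcal{A}_U)_S=\mathcal{A}_S$, we have $\aut_U(\mathcal{A}_U)^S\leq\aut_S(\mathcal{A}_S)=\{\id\}$, and likewise $\aut_{G/L}(\mathcal{A}_{G/L})^S\leq\{\id\}$. Hence both restricted groups are trivial, their product equals $\{\id\}=\aut_S(\mathcal{A}_S)$, and Lemma~\ref{cigwr} yields that $\mathcal{A}$ is a $\CI$-$S$-ring. For the ``in particular'' assertion, when $U=L$ the section $S=U/L$ is trivial, so $\mathcal{A}_S=\mathbb{Z}S$ holds automatically and the first part applies.

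I do not expect a genuine obstacle here: the argument is a short deduction from Lemma~\ref{cigwr}, and the only points requiring care are the identification $(\mathcal{A}_U)_S=\mathcal{A}_S$ of iterated restrictions (so that the restriction inclusion is applicable) and the description of $\aut_S$ of the full group ring. Both are routine facts about restrictions of $S$-rings to sections.
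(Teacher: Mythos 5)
Your argument is correct. The paper does not prove Lemma~\ref{trivial} at all but simply cites \cite[Proposition~4.1]{KR1}, and your derivation is exactly the expected one: since $\mathcal{A}_S=\mathbb{Z}S$ has only singleton basic sets, $\aut_S(\mathcal{A}_S)=\{\id\}$, so the inclusions $\aut_U(\mathcal{A}_U)^S\leq\aut_S((\mathcal{A}_U)_S)=\aut_S(\mathcal{A}_S)$ and $\aut_{G/L}(\mathcal{A}_{G/L})^S\leq\aut_S(\mathcal{A}_S)$ force the equality required in Lemma~\ref{cigwr}, and the case $U=L$ is the degenerate instance with $S$ trivial.
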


\begin{lemm}\cite[Lemma~4.2]{KR2}\label{cicayleymin}
In the conditions of Lemma~\ref{cigwr}, suppose that at least one of the $S$-rings $\mathcal{A}_U$ and $\mathcal{A}_{G/L}$ is cyclotomic and $\mathcal{A}_S$ is Cayley minimal. Then $\mathcal{A}$ is a $\CI$-$S$-ring.
\end{lemm}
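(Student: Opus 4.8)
The plan is to reduce everything to the ``in particular'' clause of Lemma~\ref{cigwr}: it suffices to establish one of the equalities $\aut_S(\mathcal{A}_S)=\aut_U(\mathcal{A}_U)^S$ or $\aut_S(\mathcal{A}_S)=\aut_{G/L}(\mathcal{A}_{G/L})^S$. Since $S=U/L$ is simultaneously a section of $U$ and of $G/L$, and restricting either $\mathcal{A}_U$ or $\mathcal{A}_{G/L}$ to $S$ yields the same $S$-ring $\mathcal{A}_S$, the two cases are symmetric; so I may assume without loss of generality that $\mathcal{A}_U$ is the cyclotomic factor and aim to prove $\aut_S(\mathcal{A}_S)=\aut_U(\mathcal{A}_U)^S$.

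First I would record the generic inclusion $\aut_U(\mathcal{A}_U)^S\leq \aut_S(\mathcal{A}_S)$, which holds for any $\mathcal{A}_U$-section and in particular places $\aut_U(\mathcal{A}_U)^S$ inside $\aut(S)$. For the reverse inclusion I would exploit cyclotomicity: writing $\mathcal{A}_U=\cyc(K,U)$ for some $K\leq \aut(U)$, the restriction $\mathcal{A}_S=\cyc(K^S,S)$ is itself cyclotomic. Consequently $\aut_S(\mathcal{A}_S)$ is the largest subgroup of $\aut(S)$ that is Cayley equivalent to $K^S$, so that $\orb(K^S,S)=\orb(\aut_S(\mathcal{A}_S),S)=\mathcal{S}(\mathcal{A}_S)$.

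The key step is a squeezing argument on orbit partitions. From $K\leq \aut_U(\mathcal{A}_U)$ I obtain the chain $K^S\leq \aut_U(\mathcal{A}_U)^S\leq \aut_S(\mathcal{A}_S)$ inside $\aut(S)$, and passing to orbits shows that $\orb(K^S,S)$ refines $\orb(\aut_U(\mathcal{A}_U)^S,S)$, which in turn refines $\orb(\aut_S(\mathcal{A}_S),S)$. Since the two outer partitions coincide (both equal $\mathcal{S}(\mathcal{A}_S)$ by the previous paragraph), the middle one is forced to equal them as well, whence $\aut_U(\mathcal{A}_U)^S\approx_{\cay}\aut_S(\mathcal{A}_S)$. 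Now the hypothesis that $\mathcal{A}_S$ is Cayley minimal asserts that the only subgroup of $\aut(S)$ Cayley equivalent to $\aut_S(\mathcal{A}_S)$ is $\aut_S(\mathcal{A}_S)$ itself; therefore $\aut_U(\mathcal{A}_U)^S=\aut_S(\mathcal{A}_S)$, and Lemma~\ref{cigwr} completes the proof.

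I expect the only delicate point to be bookkeeping rather than substance. One must confirm that $\aut_U(\mathcal{A}_U)^S$ genuinely lands in $\aut(S)$—so that Cayley minimality, which is a statement about subgroups of $\aut(S)$, is applicable—and that the section $S$ and the $S$-ring $\mathcal{A}_S$ obtained by restriction from $U$ agree with those obtained by restriction from $G/L$, so that the symmetric case is covered by literally the same argument. Both facts are immediate from $L$ being an $\mathcal{A}$-subgroup contained in $U$ together with the inclusion $\aut_U(\mathcal{A}_U)\leq \aut(U)$.
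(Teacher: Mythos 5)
Your argument is correct: the inclusions $K^S\leq \aut_U(\mathcal{A}_U)^S\leq \aut_S(\mathcal{A}_S)$ together with $\orb(K^S,S)=\mathcal{S}(\mathcal{A}_S)=\orb(\aut_S(\mathcal{A}_S),S)$ force $\aut_U(\mathcal{A}_U)^S\approx_{\cay}\aut_S(\mathcal{A}_S)$, Cayley minimality upgrades this to equality, and the ``in particular'' clause of Lemma~\ref{cigwr} finishes the job. The paper itself gives no proof here --- it only cites \cite[Lemma~4.2]{KR2} --- and your reconstruction is exactly the squeezing argument used there, so there is nothing to add beyond noting that the case where $\mathcal{A}_{G/L}$ is the cyclotomic factor is indeed handled verbatim since $S$ is an $\mathcal{A}_{G/L}$-subgroup.
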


\begin{lemm}\label{kerwreath}
Let $\mathcal{A}$ be an $S$-ring over an abelian group $G$. Suppose that $\mathcal{A}$ is the nontrivial $S=U/L$-wreath product for some $\mathcal{A}$-section $S=U/L$ and $L_1$ is an $\mathcal{A}$-subgroup containing $L$. Then $\mathcal{B}=V(K, G)$, where $K=\aut(\mathcal{A})_{G/L_1}G_{\r}$, is also the $S$-wreath product.
\end{lemm}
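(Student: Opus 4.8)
The plan is to verify directly the three defining conditions of the $S=U/L$-wreath product for $\mathcal{B}$: that $L\trianglelefteq G$, that $U$ and $L$ are $\mathcal{B}$-subgroups, and that $L\leq\rad(Y)$ for every basic set $Y$ of $\mathcal{B}$ lying outside $U$. The first is immediate since $G$ is abelian. For the second, note that $K=\aut(\mathcal{A})_{G/L_1}G_{\r}\leq\aut(\mathcal{A})$, so by the monotonicity~(1) together with~(2) one gets $\mathcal{B}=V(K,G)\geq V(\aut(\mathcal{A}),G)\geq\mathcal{A}$; hence $\mathcal{S}(\mathcal{B})$ refines $\mathcal{S}(\mathcal{A})$ and every $\mathcal{A}$-set, in particular $U$ and $L$, is a $\mathcal{B}$-set, so $S=U/L$ is a $\mathcal{B}$-section. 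Thus the whole content of the lemma is the third condition. Recalling that the basic sets of $\mathcal{B}$ are exactly the orbits of $K_e$ on $G$, I must show that for each orbit $Y$ with $Y\cap U=\varnothing$ one has $LY=Y$, i.e. $yl\in Y$ for all $y\in Y$ and $l\in L$.

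The key step is to produce, for each $l\in L$, a single element of $K_e$ realizing right translation by $l$ on the points outside $U$. Since $L\leq U$, the map $\delta_l$ defined by $g^{\delta_l}=g$ for $g\in U$ and $g^{\delta_l}=gl$ for $g\notin U$ is a well-defined permutation of $G$ (right multiplication by $l\in U$ fixes $G\setminus U$ setwise). I claim $\delta_l\in\aut(\mathcal{A})$. Granting this, $\delta_l$ fixes every $L_1$-coset setwise, because $l\in L\leq L_1$ gives $g^{\delta_l}\in gL_1$ for all $g$; hence $\delta_l\in\aut(\mathcal{A})_{G/L_1}\leq K$, and $\delta_l$ fixes $e\in U$, so $\delta_l\in K_e$. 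Consequently, for $y\notin U$ we obtain $yl=y^{\delta_l}\in\orb(K_e,y)=Y$, whence $Ly\subseteq Y$ and therefore $LY=Y$, as required.

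The main obstacle is thus the claim $\delta_l\in\aut(\mathcal{A})$, i.e. $R(X)^{\delta_l}=R(X)$ for every $X\in\mathcal{S}(\mathcal{A})$; since $\delta_l$ is a bijection of $G$, it suffices to check $R(X)^{\delta_l}\subseteq R(X)$. Given an arc $(g,xg)$ with $x\in X$, one verifies that $(xg)^{\delta_l}=x'\,g^{\delta_l}$ for a suitable $x'\in X$ by splitting into cases according to whether $g$ and $xg$ lie in $U$. If $X\subseteq U$, then $x\in U$ forces $g$ and $xg$ to be simultaneously inside or outside $U$, so $\delta_l$ translates both endpoints by $l$ or neither, and $x'=x$ works. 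If $X$ lies outside $U$, the wreath hypothesis on $\mathcal{A}$ gives $LX=X$, which is exactly what is needed: in the two mixed cases (one endpoint in $U$, the other outside) a direct computation in the abelian group $G$ shows that the required $x'$ equals $xl$ or $xl^{-1}$, and both lie in $X$ precisely because $LX=X$; in the remaining case both endpoints are translated and $x'=x$. This establishes $\delta_l\in\aut(\mathcal{A})$ and completes the argument. Note that the role of $L_1$ is only to place $\delta_l$ inside the kernel $\aut(\mathcal{A})_{G/L_1}$; the construction of $\delta_l$ itself depends only on the $U/L$-wreath structure and works verbatim for every $\mathcal{A}$-subgroup $L_1\geq L$.
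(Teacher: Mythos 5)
Your argument is correct, but it proves the lemma by a genuinely different route than the paper. The paper's proof is a sandwich argument: it sets $\mathcal{C}=\mathbb{Z}U\wr_{S}\mathbb{Z}(G/L)$, invokes the schurity of this wreath product (Corollary~10.3 of the cited paper of Muzychuk and Ponomarenko on Schur $2$-groups) to write $\mathcal{C}=V(\aut(\mathcal{C}),G)$, observes that every element of $\aut(\mathcal{C})_e$ preserves each $L$-coset and hence each $L_1$-coset, so that $\aut(\mathcal{C})\leq K$, and then concludes from the order-reversing correspondence~(1)--(2) that $\mathcal{A}\leq\mathcal{B}\leq\mathcal{C}$; since the basic sets of $\mathcal{C}$ outside $U$ are $L$-cosets, every basic set of the smaller ring $\mathcal{B}$ outside $U$ is a union of such cosets. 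You instead construct the automorphisms explicitly: your permutations $\delta_l$ are precisely elements of $\aut(\mathcal{C})_e$, and your case analysis on arcs (where the hypotheses $L\leq\rad(X)$ for $X$ outside $U$ and the commutativity of $G$ enter in the two mixed cases) is in effect a direct verification of the one piece of the schurity of $\mathcal{C}$ that the lemma actually needs. What each approach buys: the paper's version is shorter given the cited machinery and stays inside the paper's general formalism of comparing $S$-rings via~(1) and~(2); yours is self-contained and elementary, replacing the external schurity result by a two-line construction plus a routine check, and it makes transparent that $L_1$ plays no role beyond containing $L$. Both arguments are complete; one minor point worth stating explicitly in yours is that since $U$ is a $\mathcal{B}$-subgroup, every basic set of $\mathcal{B}$ is either contained in $U$ or disjoint from it, so the dichotomy ``$Y$ outside $U$'' in your final step is legitimate.
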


\begin{proof}
Since $K\leq \aut(\mathcal{A})$, from Eqs.~(1) and~(2) it follows that 
$$\mathcal{B}=V(K,G)\geq V(\aut(\mathcal{A}),G)\geq \mathcal{A}.$$ 
So $U$ and $L$ are also $\mathcal{B}$-subgroups. 

Let $\mathcal{C}=\mathbb{Z}U\wr_{S}\mathbb{Z}(G/L)$. The $S$-rings $\mathcal{C}_U$ and $\mathcal{C}_{G/L}$ are schurian and $\mathcal{C}_S$ is $2$-minimal. So $\mathcal{C}$ is schurian by~\cite[Corollary~10.3]{MP2}. This implies that 
$$\mathcal{C}=V(\aut(\mathcal{C}),G).~\eqno(3)$$ 

Every element from $\aut(\mathcal{C})_e$ fixes every basic set of $\mathcal{C}$ and hence it fixes every $L$-coset. Since $L_1\geq L$, every element from $\aut(\mathcal{C})_e$ fixes every $L_1$-coset. We conclude that $\aut(\mathcal{C})_e\leq \aut(\mathcal{A})_{G/L_1}$ and hence $\aut(\mathcal{C})\leq K$. Now from Eqs.~(1) and~(3) it follows that
$$\mathcal{C}=V(\aut(\mathcal{C}),G)\geq V(K,G)=\mathcal{B}.~\eqno(4)$$

The group $U$ is both $\mathcal{B}$-,$\mathcal{C}$-subgroup. Due to Eq.~(4), every basic set of  $\mathcal{B}$ which lies outside $U$ is a union of some basic sets of $\mathcal{C}$ which lie outside $U$. So $L\leq \rad(X)$ for every $X\in \mathcal{S}(\mathcal{B})$ outside $U$. Thus, $\mathcal{B}$ is the $S$-wreath product. The lemma is proved.
\end{proof}

\begin{lemm}\label{kerci}
In the conditions of Lemma~\ref{cigwr}, suppose that: $(1)$ every $S$-ring over $U$ is a $\CI$-$S$-ring; $(2)$ $\mathcal{A}_{G/L}$ is $2$-minimal or normal. Then $\mathcal{A}$ is a $\CI$-$S$-ring.
\end{lemm}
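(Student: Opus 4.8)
The plan is to deduce the $\CI$-property of $\mathcal{A}$ from the single statement that $\aut(\mathcal{A})$ is $G$-transjugate, which suffices by Lemma~\ref{trans} applied to $\mathcal{A}=V(\aut(\mathcal{A}),G)$; here one first records, or establishes via the schurity of the generalized wreath product~\cite{MP2}, that $\mathcal{A}$ is schurian. The engine common to both alternatives of hypothesis~$(2)$ is the $\mathcal{A}$-subgroup $L$ itself together with the group $K=\aut(\mathcal{A})_{G/L}G_{\r}$, which is $2$-closed by~\cite[Proposition~2.1]{HM}. First I would prove that $\mathcal{B}=V(K,G)$ is a $\CI$-$S$-ring. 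By Lemma~\ref{kerwreath} applied with $L_1=L$, the $S$-ring $\mathcal{B}$ is again the $S$-wreath product. Since $\aut(\mathcal{A})_{G/L}$ acts trivially on the $L$-cosets, one has $K^{G/L}=(G_{\r})^{G/L}=(G/L)_{\r}$, whence $\mathcal{B}_{G/L}=V(K^{G/L},G/L)=\mathbb{Z}(G/L)$ and therefore $\mathcal{B}_S=\mathbb{Z}S$. As $\mathcal{B}_U$ is a $\CI$-$S$-ring by hypothesis~$(1)$, Lemma~\ref{trivial} yields that $\mathcal{B}=V(K,G)$ is a $\CI$-$S$-ring, so $K$ is $G$-transjugate by Lemma~\ref{trans}.

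It then remains to push an arbitrary $G$-regular subgroup $H\leq\aut(\mathcal{A})$ into $K$ up to $\aut(\mathcal{A})$-conjugacy: once $H^{\gamma}\leq K$ for some $\gamma\in\aut(\mathcal{A})$, the $G$-transjugacy of $K$ makes $H^{\gamma}$ $K$-conjugate to $G_{\r}$, and $\aut(\mathcal{A})$ is $G$-transjugate. In the normal case this is precisely the argument inside Lemma~\ref{minnorm}: the quotient $H^{G/L}$ is a $G/L$-regular subgroup of $\aut(\mathcal{A})^{G/L}\leq\aut(\mathcal{A}_{G/L})$, and since $\mathcal{A}_{G/L}$ is a $\CI$-$S$-ring the group $\aut(\mathcal{A}_{G/L})$ is $G/L$-transjugate, so $H^{G/L}$ is conjugate to $(G/L)_{\r}$; normality of $\mathcal{A}_{G/L}$ makes $(G/L)_{\r}$ its own unique conjugate, forcing $H^{G/L}=(G/L)_{\r}$ and hence $H\leq\aut(\mathcal{A})_{G/L}G_{\r}=K$ with no lifting required. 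Thus in the normal case one may simply invoke Lemma~\ref{minnorm} with this $L$ and the $\CI$-$S$-ring $\mathcal{B}=V(K,G)$.

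In the $2$-minimal case the same scheme is intended, but now $(G/L)_{\r}$ need not be normal in $\aut(\mathcal{A}_{G/L})$, so the conjugacy of $H^{G/L}$ to $(G/L)_{\r}$ lives a priori only in $\aut(\mathcal{A}_{G/L})$ and cannot be lifted to $\aut(\mathcal{A})$. The key extra step I would carry out is to prove the equality $\aut(\mathcal{A})^{G/L}=\aut(\mathcal{A}_{G/L})$. For this I would use that, for the $S$-wreath product, $\mathcal{A}_{G/L}=V(\aut(\mathcal{A})^{G/L},G/L)$ — this is where the schurity of $\mathcal{A}$ enters, guaranteeing that $\aut(\mathcal{A})_e$ is transitive on each basic set and hence that $(\aut(\mathcal{A})^{G/L})_{e}$ is transitive on its image. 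Combined with $\mathcal{A}_{G/L}=V(\aut(\mathcal{A}_{G/L}),G/L)$, this gives $\aut(\mathcal{A})^{G/L}\approx_2\aut(\mathcal{A}_{G/L})$, and the $2$-minimality of $\mathcal{A}_{G/L}$ upgrades the $2$-equivalence to the desired equality. Then $\aut(\mathcal{A})^{G/L}$ is itself $G/L$-transjugate, so there is $\delta\in\aut(\mathcal{A})^{G/L}$ with $(H^{G/L})^{\delta}=(G/L)_{\r}$; lifting $\delta$ to $\gamma\in\aut(\mathcal{A})$ gives $H^{\gamma}\leq K$, and the $G$-transjugacy of $K$ completes the proof.

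The step I expect to be the main obstacle is exactly this last reduction in the $2$-minimal case, namely proving $\aut(\mathcal{A})^{G/L}=\aut(\mathcal{A}_{G/L})$. It rests on the structural identity $\mathcal{A}_{G/L}=V(\aut(\mathcal{A})^{G/L},G/L)$ for the generalized wreath product — and hence on the schurity of $\mathcal{A}$, which must be secured first — before $2$-minimality can be applied. The normal case avoids this difficulty altogether, since normality pins down $H^{G/L}=(G/L)_{\r}$ directly and no element of $\aut(\mathcal{A}_{G/L})$ needs to be lifted.
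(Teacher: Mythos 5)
Your proof is correct and follows the paper's argument essentially step for step: the same group $K=\aut(\mathcal{A})_{G/L}G_{\r}$, the same use of Lemma~\ref{kerwreath} and of Lemma~\ref{trivial} (via $\mathcal{B}_{G/L}=\mathbb{Z}(G/L)$, hence $\mathcal{B}_S=\mathbb{Z}S$) to show that $V(K,G)$ is a $\CI$-$S$-ring, and Lemma~\ref{minnorm} in the normal case. The only difference is that in the $2$-minimal case the paper simply cites~\cite[Lemma~3.6]{KR2}, whereas you reprove it; your reconstruction --- deriving $\aut(\mathcal{A})^{G/L}=\aut(\mathcal{A}_{G/L})$ from $\mathcal{A}_{G/L}=V(\aut(\mathcal{A})^{G/L},G/L)$ (which indeed requires the schurity of $\mathcal{A}$, implicitly assumed here just as in Lemma~\ref{minnorm}) together with $2$-minimality, and then lifting the conjugating element to push a $G$-regular subgroup into $K$ --- is sound and is essentially the argument behind that citation.
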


\begin{proof}
Let $\mathcal{B}=V(K,G)$, where $K=\aut(\mathcal{A})_{G/L}G_{\r}$. From Lemma~\ref{kerwreath} it follows that $\mathcal{B}$ is the $S$-wreath product. Since $L_1=L$, the definition of $\mathcal{B}$ implies that $\mathcal{B}_{G/L}=\mathbb{Z}(G/L)$ and hence $\mathcal{B}_S=\mathbb{Z}S$. Clearly, $\mathcal{B}_{G/L}$ is a $\CI$-$S$-ring. The $S$-ring $\mathcal{B}_U$ is a $\CI$-$S$-ring by the assumption of the lemma. Therefore $\mathcal{B}$ is a $\CI$-$S$-ring by Lemma~\ref{trivial}. The $S$-ring $\mathcal{A}_{G/L}$ is a $\CI$-$S$-ring by the assumption of the lemma. Thus, $\mathcal{A}$ is a $\CI$-$S$-ring by~\cite[Lemma~3.6]{KR2} whenever $\mathcal{A}_{G/L}$ is $2$-minimal and by Lemma~\ref{minnorm} whenever $\mathcal{A}_{G/L}$ is normal. The lemma is proved. 
\end{proof}

Let $V$ and $W$ be $\mathcal{A}$-subgroups. The $S$-ring $\mathcal{A}$ is called the \emph{star product} of $\mathcal{A}_V$ and $\mathcal{A}_W$  if the following conditions hold:

$(1)$ $V\cap W\trianglelefteq W$;

$(2)$ each $T\in \mathcal{S}(\mathcal{A})$ with $T\subseteq (W\setminus V) $ is a union of some $V\cap W$-cosets;

$(3)$  for each $T\in \mathcal{S}(\mathcal{A})$ with $T\subseteq G\setminus (V\cup W)$ there exist $R\in \mathcal{S}(\mathcal{A}_V)$ and $S\in \mathcal{S}(\mathcal{A}_W)$ such that $T=RS$.

\noindent In this case we write $\mathcal{A}=\mathcal{A}_V \star \mathcal{A}_W$. The construction of the star product of $S$-rings was introduced  in~\cite{HM}. The star product is called \emph{nontrivial} if $V\neq \{e\}$ and $V\neq G$. If $V\cap W=\{e\}$ then the star product is the usual \emph{tensor product} of $\mathcal{A}_V$ and $\mathcal{A}_W$ (see~\cite[p.5]{EKP}). In this case we write $\mathcal{A}=\mathcal{A}_V\otimes \mathcal{A}_W$. One can check that if $\mathcal{A}=\mathcal{A}_V\otimes \mathcal{A}_W$ then $\aut(\mathcal{A})=\aut(\mathcal{A}_V)\times \aut(\mathcal{A}_W)$. If $V\cap W\neq \{e\}$ then $\mathcal{A}$ is the nontrivial $V/(V\cap W)$-wreath product.

\begin{lemm}\label{cistar}
Let $G\in\mathcal{E}$ and  $\mathcal{A}$ a schurian $S$-ring over $G$. Suppose that $\mathcal{A}=\mathcal{A}_V \star \mathcal{A}_W$ for some $\mathcal{A}$-subgroups $V$ and $W$ of $G$ and the $S$-rings $\mathcal{A}_V$ and $\mathcal{A}_{W/(V\cap W)}$ are $\CI$-$S$-rings. Then $\mathcal{A}$ is a $\CI$-$S$-ring.
\end{lemm}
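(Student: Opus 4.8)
My plan is to verify the transjugacy criterion of Lemma~\ref{trans}. As $\mathcal{A}$ is schurian, $\mathcal{A}=V(\aut(\mathcal{A}),G)$ and $\aut(\mathcal{A})$ is $2$-closed, so it suffices to prove that $\aut(\mathcal{A})$ is $G$-transjugate. Set $L=V\cap W$. The first step is to extract the structural consequence of the star-product axioms that I will use repeatedly: applying the canonical epimorphism $\pi\colon G\to G/L$ to the basic sets of $\mathcal{A}$ shows that their images are ``rectangular'', so that $\mathcal{A}_{G/L}=\mathcal{A}_{V/L}\otimes\mathcal{A}_{W/L}$ over $G/L=(V/L)\times(W/L)$. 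I will also record that $G/V\cong W/L$ and that $\mathcal{A}_{G/V}\cong\mathcal{A}_{W/(V\cap W)}$, which is CI by hypothesis. I then separate the cases $L=\{e\}$ and $L\neq\{e\}$.

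If $L=\{e\}$ then $\mathcal{A}=\mathcal{A}_V\otimes\mathcal{A}_W$ with $G=V\times W$ and $\aut(\mathcal{A})=\aut(\mathcal{A}_V)\times\aut(\mathcal{A}_W)$, and both $\mathcal{A}_V$ and $\mathcal{A}_W=\mathcal{A}_{W/(V\cap W)}$ are CI and schurian (as sections of $\mathcal{A}$). Given a $G$-regular $H\leq\aut(\mathcal{A})$, I would put $H_1=H\cap(\aut(\mathcal{A}_V)\times 1)$ and $H_2=H\cap(1\times\aut(\mathcal{A}_W))$. Since $H$ is abelian and regular, its projections onto the two factors are transitive, hence regular, and a short order count yields $H=H_1\times H_2$ with $H_1$ regular on $V$ and $H_2$ regular on $W$. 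Here $G\in\mathcal{E}$ enters decisively: all Sylow subgroups being elementary abelian, the abelian groups $H_1,H_2$ are determined up to isomorphism by their orders, so $H_1\cong V$ and $H_2\cong W$, i.e.\ they are $V$-regular and $W$-regular. The transjugacy of $\aut(\mathcal{A}_V)$ and $\aut(\mathcal{A}_W)$ (Lemma~\ref{trans}) then provides $\alpha\in\aut(\mathcal{A}_V)$ and $\beta\in\aut(\mathcal{A}_W)$ with $H_1^{\alpha}=V_{\r}$ and $H_2^{\beta}=W_{\r}$, whence $H^{(\alpha,\beta)}=V_{\r}\times W_{\r}=G_{\r}$. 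Thus $\aut(\mathcal{A})$ is $G$-transjugate and $\mathcal{A}$ is CI.

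If $L\neq\{e\}$ then, as noted in the text, $\mathcal{A}$ is the nontrivial $S$-wreath product with $S=V/L$ and $U=V$, and the natural engine is Lemma~\ref{cigwr}. Two of its three inputs are immediate. The fiber ring $\mathcal{A}_U=\mathcal{A}_V$ is CI by hypothesis. The automorphism condition is automatic from the tensor factorization of the base: $\mathcal{A}_{G/L}=\mathcal{A}_{V/L}\otimes\mathcal{A}_{W/L}$ gives $\aut(\mathcal{A}_{G/L})=\aut(\mathcal{A}_{V/L})\times\aut(\mathcal{A}_{W/L})$, hence $\aut_{G/L}(\mathcal{A}_{G/L})=\aut_{V/L}(\mathcal{A}_{V/L})\times\aut_{W/L}(\mathcal{A}_{W/L})$, and restricting to $S=V/L$ gives $\aut_{G/L}(\mathcal{A}_{G/L})^{S}=\aut_{S}(\mathcal{A}_{S})$, which is exactly the ``in particular'' clause of Lemma~\ref{cigwr}.

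The remaining input, and the main obstacle, is that $\mathcal{A}_{G/L}$ must itself be CI. Through the factorization $\mathcal{A}_{G/L}=\mathcal{A}_{V/L}\otimes\mathcal{A}_{W/L}$ and the tensor argument of the first case, this reduces to the CI-property of the two factors. One factor, $\mathcal{A}_{W/L}\cong\mathcal{A}_{G/V}=\mathcal{A}_{W/(V\cap W)}$, is CI by hypothesis; but the other, $\mathcal{A}_{V/L}$, is only a \emph{quotient} of the CI-$S$-ring $\mathcal{A}_V$, and the CI-property does not descend to quotients for free, since a regular subgroup of $\aut(\mathcal{A}_{V/L})$ need not lift to $\aut(\mathcal{A}_V)$. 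This is where the real work lies. I expect to handle it by a direct transjugacy argument for the star product rather than a black-box application of Lemma~\ref{cigwr} to $\mathcal{A}_{G/L}$: starting from a $G$-regular $H\leq\aut(\mathcal{A})$, I would use $\mathcal{A}_{G/V}$ (which is CI) to conjugate $H^{G/V}$ onto $(G/V)_{\r}$ on the base, and then straighten the action along the $V$-fibers using the CI-property of $\mathcal{A}_V$, the tensor factorization of the base being exactly what makes the base conjugator lift compatibly to $\aut(\mathcal{A})$. Carrying out this lift, i.e.\ showing that the two hypotheses $\mathcal{A}_V$ CI and $\mathcal{A}_{W/(V\cap W)}$ CI suffice \emph{without} assuming $\mathcal{A}_{V/L}$ is CI, is the crux of the argument.
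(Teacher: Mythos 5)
Your reduction is set up sensibly, and the tensor case $V\cap W=\{e\}$ is essentially complete: the order count $H=\pi_1(H)\times\pi_2(H)$ for a $G$-regular $H\leq\aut(\mathcal{A}_V)\times\aut(\mathcal{A}_W)$ is valid, and the observation that $H_1,H_2$ lie in $\mathcal{E}$ (being subgroups of $H\cong G$) and are therefore determined by their orders correctly uses the hypothesis $G\in\mathcal{E}$. But the proposal does not prove the lemma in the case $L=V\cap W\neq\{e\}$, and you say so yourself. Feeding Lemma~\ref{cigwr} requires $\mathcal{A}_{G/L}$ to be a $\CI$-$S$-ring, which via the factorization $\mathcal{A}_{G/L}=\mathcal{A}_{V/L}\otimes\mathcal{A}_{W/L}$ reduces to the $\CI$-property of $\mathcal{A}_{V/L}$ --- a quotient of the $\CI$-$S$-ring $\mathcal{A}_V$, and the $\CI$-property does not descend to quotients. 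Your fallback is a sketched ``direct transjugacy argument'': conjugate $H^{G/V}$ to $(G/V)_{\r}$ using the hypothesis on $\mathcal{A}_{W/(V\cap W)}\cong\mathcal{A}_{G/V}$, lift the conjugator, then straighten along the $V$-fibers using the $\CI$-property of $\mathcal{A}_V$. That lift-and-straighten step is exactly the nontrivial content here (it is the substance of Theorem~4.1 of~\cite{KM}, together with Proposition~3.2 there, which is what the paper cites in lieu of a proof): one must show that the base conjugator can be chosen inside $\aut(\mathcal{A})$, and that after conjugation the fiberwise discrepancy between $H$ and $G_{\r}$ lands in a group to which the transjugacy of $\aut(\mathcal{A}_V)$ actually applies. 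None of this is carried out, so the argument stops at its self-identified crux.

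For comparison, the paper gives no independent argument at all: it derives the lemma directly from \cite[Proposition~3.2, Theorem~4.1]{KM}. So a completed version of your approach would be a genuinely self-contained alternative, but as written it replaces the cited black box with an unproven claim of the same depth. To repair it within the toolkit of this paper, either invoke the results of~\cite{KM} as the author does, or supply the full fiber-straightening argument; merely asserting that ``the tensor factorization of the base is exactly what makes the base conjugator lift compatibly'' is not sufficient.
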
 

\begin{proof}
The statement of the lemma follows from~\cite[Proposition~3.2, Theorem~4.1]{KM}.
\end{proof}

\begin{lemm}\cite[Lemma 2.8]{FK}\label{tenspr}
Let $\mathcal{A}$ be an $S$-ring over an abelian group $G=G_1\times G_2$. Assume that $G_1$ and $G_2$ are $\mathcal{A}$-groups. Then $\mathcal{A}=\mathcal{A}_{G_1}\otimes \mathcal{A}_{G_2}$ whenever $\mathcal{A}_{G_1}$ or $\mathcal{A}_{G_2}$ is the group ring.
\end{lemm}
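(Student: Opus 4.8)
The plan is to verify directly that $\mathcal{A}$ satisfies the defining conditions of the tensor product $\mathcal{A}_{G_1}\otimes\mathcal{A}_{G_2}$, which, since $G_1\cap G_2=\{e\}$, amounts to checking the star product conditions for $V=G_1$ and $W=G_2$. By the obvious symmetry between the two factors, I may assume that $\mathcal{A}_{G_1}=\mathbb{Z}G_1$, so that every element of $G_1$ is a singleton basic set, i.e. $G_1\subseteq\O_\theta(\mathcal{A})$.

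The heart of the argument is the following factorization claim: every basic set $X\in\mathcal{S}(\mathcal{A})$ has the form $X=gS$ for some $g\in G_1$ and some $S\in\mathcal{S}(\mathcal{A}_{G_2})$. To prove it, I fix $x\in X$ and use the direct decomposition to write $x=ab$ with $a\in G_1$ and $b\in G_2$. Since $a\in\O_\theta(\mathcal{A})$, the singleton $\{a^{-1}\}$ is a basic set, and because the product of a basic set with a singleton basic set is again a basic set, the translate $a^{-1}X$ lies in $\mathcal{S}(\mathcal{A})$. This translate contains $a^{-1}x=b\in G_2$. Now $G_2$ is an $\mathcal{A}$-subgroup, hence $\underline{G_2}\in\mathcal{A}$ forces $G_2$ to be a union of basic sets; as the basic sets partition $G$, any basic set meeting $G_2$ is contained in $G_2$. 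Therefore $S:=a^{-1}X\subseteq G_2$, so $S\in\mathcal{S}(\mathcal{A}_{G_2})$ and $X=aS$, which proves the claim.

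With the claim in hand, the star product conditions for $V=G_1$ and $W=G_2$ follow at once: conditions $(1)$ and $(2)$ are vacuous because $V\cap W=\{e\}$, while condition $(3)$ is precisely the factorization $T=\{g\}S$ with $\{g\}\in\mathcal{S}(\mathcal{A}_{G_1})$ and $S\in\mathcal{S}(\mathcal{A}_{G_2})$ established above. Hence $\mathcal{A}=\mathcal{A}_{G_1}\star\mathcal{A}_{G_2}$, and since $G_1\cap G_2=\{e\}$ this star product is by definition the tensor product $\mathcal{A}_{G_1}\otimes\mathcal{A}_{G_2}$, as required.

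The only real content is the factorization claim; I expect its two ingredients, namely that translation by a thin-radical element permutes the basic sets and that a basic set meeting the $\mathcal{A}$-subgroup $G_2$ must lie inside $G_2$, to be the crux of the proof, with everything else reducing to a routine matching of the definitions of the star and tensor products.
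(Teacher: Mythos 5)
Your argument is correct. Note that the paper itself offers no proof of this lemma -- it is quoted verbatim from [FK, Lemma~2.8] -- so there is nothing internal to compare against; your proposal supplies a self-contained proof where the paper supplies only a citation. The two ingredients you isolate are exactly the right ones and both are available in the paper's Section~2: if $\mathcal{A}_{G_1}=\mathbb{Z}G_1$ then $G_1\subseteq\O_\theta(\mathcal{A})$, so the stated closure property ``$XY\in\mathcal{S}(\mathcal{A})$ when $|X|=1$'' makes $a^{-1}X$ a basic set, and a basic set meeting the $\mathcal{A}$-subgroup $G_2$ lies inside it because $\underline{G_2}\in\mathcal{A}$ forces $G_2$ to be a union of basic sets. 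The factorization $X=aS$ then gives both inclusions needed for $\mathcal{S}(\mathcal{A})=\{RS: R\in\mathcal{S}(\mathcal{A}_{G_1}),\ S\in\mathcal{S}(\mathcal{A}_{G_2})\}$ (every basic set is such a product, and every such product $\{a\}S=aS$ is a basic set), which is the tensor product; routing this through the star-product axioms with $V\cap W=\{e\}$, as you do, is equivalent and consistent with the paper's remark that the star product with trivial intersection is the tensor product.
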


\begin{lemm}\label{citens}
In the conditions of Lemma~\ref{cigwr}, suppose that $|G:U|$ is a prime and there exists $X\in \mathcal{S}(\mathcal{A}_{G/L})$ outside $S$ with $|X|=1$. Then $\mathcal{A}$ is a $\CI$-$S$-ring.
\end{lemm}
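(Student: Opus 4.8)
The plan is to verify the automorphism condition of Lemma~\ref{cigwr}; namely, I would show that
$$\aut_S(\mathcal{A}_S)=\aut_{G/L}(\mathcal{A}_{G/L})^S,$$
which is the second of the two ``in particular'' equalities in Lemma~\ref{cigwr}. All the remaining hypotheses of Lemma~\ref{cigwr} (that $G\in\mathcal{E}$, that the $S$-wreath product is nontrivial, and that $\mathcal{A}_U$ and $\mathcal{A}_{G/L}$ are $\CI$-$S$-rings) are already in force, so once this equality is established the conclusion that $\mathcal{A}$ is a $\CI$-$S$-ring is immediate. The route to the equality is to prove that $\mathcal{A}_{G/L}$ splits as a tensor product $\mathcal{A}_S\otimes\mathbb{Z}\langle x_p\rangle$ for a suitable thin subgroup $\langle x_p\rangle$ of order $p=|G:U|$, and then to exploit the product structure of the automorphism group of a tensor product.

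First I would produce the complementary thin subgroup. Let $X=\{x\}$ be the given singleton basic set of $\mathcal{A}_{G/L}$ with $x\notin S$. Then $x$ lies in the thin radical $\O_\theta(\mathcal{A}_{G/L})$, which is a subgroup, so $\langle x\rangle\subseteq\O_\theta(\mathcal{A}_{G/L})$ and every power of $x$ is thin. Since $G/L$ is a quotient of a group in $\mathcal{E}$, it again lies in $\mathcal{E}$, and hence every element of $G/L$ has square-free order. Because $S$ has prime index $p$ in $G/L$ and $x\notin S$, the image of $x$ generates $(G/L)/S\cong C_p$, so $p$ divides $|x|$. Writing $x_p$ for the $p$-component of $x$, which is a power of $x$ (hence thin) and has order exactly $p$ as $|x|$ is square-free and divisible by $p$, I obtain $x_p\notin S$ and $\langle x_p\rangle\cap S=\{e\}$, so that $G/L=S\times\langle x_p\rangle$. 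As $\langle x_p\rangle$ consists of thin elements, $(\mathcal{A}_{G/L})_{\langle x_p\rangle}=\mathbb{Z}\langle x_p\rangle$, and Lemma~\ref{tenspr} applied to this direct decomposition yields $\mathcal{A}_{G/L}=\mathcal{A}_S\otimes\mathbb{Z}\langle x_p\rangle$.

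It remains to deduce the automorphism equality. The inclusion $\aut_{G/L}(\mathcal{A}_{G/L})^S\leq\aut_S(\mathcal{A}_S)$ is the general fact recorded in Section~3 (applied to the group $G/L$ and its section $S$). For the reverse inclusion I would take any $\sigma\in\aut_S(\mathcal{A}_S)$ and extend it to $\hat\sigma=\sigma\times\mathrm{id}\in\aut(G/L)$ using the splitting $G/L=S\times\langle x_p\rangle$. Since the basic sets of the tensor product $\mathcal{A}_S\otimes\mathbb{Z}\langle x_p\rangle$ are exactly the sets $Rw$ with $R\in\mathcal{S}(\mathcal{A}_S)$ and $w\in\langle x_p\rangle$, and $\hat\sigma(Rw)=\sigma(R)w$ is again such a set, $\hat\sigma$ preserves $\mathcal{S}(\mathcal{A}_{G/L})$; thus $\hat\sigma\in\aut_{G/L}(\mathcal{A}_{G/L})$ and $\hat\sigma^S=\sigma$, giving $\aut_S(\mathcal{A}_S)\leq\aut_{G/L}(\mathcal{A}_{G/L})^S$. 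The two inclusions furnish the desired equality, and Lemma~\ref{cigwr} completes the proof.

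I expect the only genuinely delicate point to be the extraction of the order-$p$ thin complement: one must use that membership in the thin radical is inherited by all powers of $x$, and that the ambient group lies in $\mathcal{E}$ (so the $p$-component of $x$ has order exactly $p$ and meets $S$ trivially). Once the clean splitting $G/L=S\times\langle x_p\rangle$ with $\langle x_p\rangle$ a group-ring factor is in hand, both the tensor decomposition via Lemma~\ref{tenspr} and the subsequent automorphism computation are routine.
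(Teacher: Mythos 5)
Your proof is correct and follows essentially the same route as the paper: decompose $\mathcal{A}_{G/L}$ as $\mathcal{A}_S\otimes\mathbb{Z}\langle x\rangle$ via Lemma~\ref{tenspr} and extend each $\sigma\in\aut_S(\mathcal{A}_S)$ by the identity on the thin cyclic complement to verify the hypothesis of Lemma~\ref{cigwr}. The only difference is that the paper asserts directly that $|\langle x\rangle|$ is prime, so that $\langle x\rangle$ itself serves as the complement, whereas you pass to the $p$-component $x_p$ of $x$ --- a slightly more careful rendering of the same step, since $G\in\mathcal{E}$ only guarantees that $|x|$ is square-free.
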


\begin{proof}
Let $X=\{x\}$ for some $x\in G/L$. Due to $G\in \mathcal{E}$, we conclude that $|\langle x \rangle|$ is prime. So $|\langle x \rangle \cap S|=1$ because $x$ lies outside $S$. Since $|G:U|$ is a prime, $G/L=\langle x \rangle \times S$. Note that $\mathcal{A}_{\langle x \rangle}=\mathbb{Z}\langle x \rangle$. Therefore 
$$\mathcal{A}_{G/L}=\mathbb{Z}\langle x \rangle \otimes \mathcal{A}_S$$ 
by Lemma~\ref{tenspr}. 

Let $\varphi \in \aut_{S}(\mathcal{A}_S)$. Define $\psi \in \aut(G/L)$ in the following way: 
$$\psi^{S}=\varphi,~x^\psi=x.$$ 
Then $\psi \in \aut_{G/L}(\mathcal{A}_{G/L})$ because $\mathcal{A}_{G/L}=\mathbb{Z}\langle x \rangle \otimes \mathcal{A}_S$. We obtain that $\aut_{G/L}(\mathcal{A}_{G/L})^S \geq \aut_S(\mathcal{A}_S)$, and hence $\aut_{G/L}(\mathcal{A}_{G/L})^S=\aut_S(\mathcal{A}_S)$. Thus, $\mathcal{A}$ is a $\CI$-$S$-ring by Lemma~\ref{cigwr}. The lemma is proved.
\end{proof}

\section{$p$-$S$-rings}

Let $p$ be a prime. An $S$-ring $\mathcal{A}$ over a $p$-group $G$ is called a \emph{$p$-$S$-ring} if every basic set of $\mathcal{A}$ has a $p$-power size. Clearly, if $|G|=p$ then $\mathcal{A}=\mathbb{Z}G$. In the next three lemmas $G$ is a $p$-group and $\mathcal{A}$ is a $p$-$S$-ring over $G$. 

\begin{lemm}\label{pextension}
If $\mathcal{B}\geq \mathcal{A}$ then $\mathcal{B}$ is a $p$-$S$-ring.
\end{lemm}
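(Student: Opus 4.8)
The plan is to show that every basic set of $\mathcal{B}$ has $p$-power cardinality, by induction on $|G|$, with Lemma~\ref{intersection} as the main computational tool. First I would record the dictionary between the two $S$-rings. Since $\mathcal{A}\subseteq\mathcal{B}$, each $\underline{X}$ with $X\in\mathcal{S}(\mathcal{A})$ lies in $\mathcal{B}$ and therefore, being a $0/1$-vector, is a sum of basic sets of $\mathcal{B}$; hence every $X\in\mathcal{S}(\mathcal{A})$ is a disjoint union of basic sets of $\mathcal{B}$, the partition $\mathcal{S}(\mathcal{B})$ refines $\mathcal{S}(\mathcal{A})$, and every $Y\in\mathcal{S}(\mathcal{B})$ is contained in a unique $X\in\mathcal{S}(\mathcal{A})$. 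As $\mathcal{A}$ is a $p$-$S$-ring we have $|X|=p^k$, so the task reduces to proving that $|Y|$ is a power of $p$.

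A second preliminary step is to check that the $p$-$S$-ring property passes to the sections I intend to induct on, and that these are compatible with the inclusion. For an $\mathcal{A}$-subgroup $U$, the basic sets of $\mathcal{A}_U$ are precisely the basic sets of $\mathcal{A}$ contained in $U$, so $\mathcal{A}_U$ is again a $p$-$S$-ring; for a normal $\mathcal{A}$-subgroup $L$, the image $X^{\pi}$ of a basic set under $\pi\colon G\to G/L$ is a basic set of $\mathcal{A}_{G/L}$ and Lemma~\ref{intersection} gives $|X^{\pi}|=|X|/|X\cap Lx|$, a quotient of two $p$-powers, so $\mathcal{A}_{G/L}$ is a $p$-$S$-ring as well. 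Both constructions respect the inclusion, i.e. $\mathcal{B}_U\geq\mathcal{A}_U$ and $\mathcal{B}_{G/L}\geq\mathcal{A}_{G/L}$, so the inductive hypothesis applies to them whenever the underlying group is proper.

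With this in hand, fix $Y\subseteq X$ and put $U=\langle X\rangle$ and $L=\rad(X)$, both $\mathcal{A}$-subgroups, with $L\trianglelefteq U$. If $U\neq G$, then $Y\in\mathcal{S}(\mathcal{B}_U)$ over the smaller group $U$, and induction applied to $\mathcal{B}_U\geq\mathcal{A}_U$ shows that $|Y|$ is a power of $p$. If $U=G$ but $L\neq\{e\}$, then $L\trianglelefteq G$ and I would pass to $G/L$: induction applied to $\mathcal{B}_{G/L}\geq\mathcal{A}_{G/L}$ shows $|Y^{\pi}|$ is a power of $p$, while Lemma~\ref{intersection} for $\mathcal{B}$ and the $\mathcal{B}$-subgroup $L$ gives $|Y|=|Y^{\pi}|\cdot|Y\cap Ly|$, reducing matters to the multiplicity $|Y\cap Ly|$.

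The hard part is exactly the leftover configuration together with this multiplicity. When $U=\langle X\rangle=G$ and $\rad(X)=\{e\}$, neither reduction is available, so one must rule out a proper piece $Y\subsetneq X$ by a genuinely structural property of $p$-valenced $S$-rings over $p$-groups — essentially that such a generation-plus-trivial-radical configuration cannot persist and forces a collapse to the degenerate case $\mathcal{A}=\mathbb{Z}C_p$. Symmetrically, to control the kernel multiplicity $|Y\cap Ly|$ in the quotient step I would take $L$ minimal and run a parallel induction on $|L|$ inside a single $L$-coset, where the same structural input is applied. I expect the interaction between the generation/radical dichotomy and the divisibility of the multiplicity to be where the real work concentrates, with the refinement dictionary and Lemma~\ref{intersection} disposing of everything else routinely.
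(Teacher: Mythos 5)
The paper disposes of this lemma by citing \cite[Theorem~1.1]{Pon}, so a self-contained argument would in principle be a genuinely different (and more informative) route; and your reduction framework is sound as far as it goes: the refinement of $\mathcal{S}(\mathcal{A})$ by $\mathcal{S}(\mathcal{B})$, the compatibilities $\mathcal{B}_U\geq\mathcal{A}_U$ and $\mathcal{B}_{G/L}\geq\mathcal{A}_{G/L}$, and the identity $|Y|=|Y^{\pi}|\cdot|Y\cap Ly|$ via Lemma~\ref{intersection} are all correct (the last is exactly the bookkeeping the paper itself uses to prove Lemma~\ref{psection}). The difficulty is that after this reduction nothing of the theorem has actually been proved. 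The two points you defer --- (i) ruling out a proper basic set $Y\subsetneq X$ of $\mathcal{B}$ when $\langle X\rangle=G$ and $\rad(X)=\{e\}$, and (ii) showing that the multiplicity $|Y\cap Ly|$ is a $p$-power --- are precisely the substance of the cited theorem, and for neither do you offer an argument beyond the assertion that ``a genuinely structural property'' must force a collapse to $\mathcal{A}=\mathbb{Z}C_p$. That assertion is itself a nontrivial unproven claim: $p$-$S$-rings over large $p$-groups do possess basic sets with trivial radical (see Lemma~\ref{p5decomp2} of this very paper), so whatever forces the collapse must exploit the conjunction with $\langle X\rangle=G$ in an essential way, and no mechanism for this is given. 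Likewise, ``a parallel induction on $|L|$ inside a single $L$-coset'' is not an argument: the pieces $Y\cap Ly$ are not basic sets of any $S$-ring in sight, so it is not even clear on what object that induction would run.

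Two further points. First, the lemma is stated for an arbitrary $p$-group $G$, while your quotient step needs $L=\rad(X)$ to be normal in $G$, which is automatic only in the abelian case. Second, even granting the collapse to $\mathcal{A}=\mathbb{Z}C_p$ in the terminal case, that handles (i) but leaves (ii) untouched, since case (ii) arises for a different basic set $X$ with nontrivial radical. In short, the skeleton is fine but the theorem's content is missing; to make the proof self-contained you would have to supply the splitting obstruction that \cite[Theorem~1.1]{Pon} provides, or else simply cite it as the paper does.
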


\begin{proof}
The statement of the lemma follows from~\cite[Theorem~1.1]{Pon}.
\end{proof}

\begin{lemm}\label{psection}
Let $S=U/L$ be an $\mathcal{A}$-section of $G$. Then $\mathcal{A}_S$ is a $p$-$S$-ring.
\end{lemm}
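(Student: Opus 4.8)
The plan is to pull every basic set of $\mathcal{A}_S$ back to a basic set of $\mathcal{A}$ lying inside $U$, where the $p$-power property is available by hypothesis, and to track how the size changes under the projection $\pi\colon U\to S=U/L$ by means of Lemma~\ref{intersection}.

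I would first record that $\mathcal{A}_U$ is a $p$-$S$-ring: since $U$ is an $\mathcal{A}$-subgroup, its basic sets are exactly the basic sets of $\mathcal{A}$ contained in $U$, and these have $p$-power size because $\mathcal{A}$ is a $p$-$S$-ring. The core of the argument is then a counting step. Fix a basic set $X\in\mathcal{S}(\mathcal{A}_U)$. As $L\trianglelefteq U$ is an $\mathcal{A}_U$-subgroup, Lemma~\ref{intersection} supplies a constant $c$ with $|X\cap Lx|=c$ for every $x\in X$. The cosets $Lx$ meeting $X$ are precisely the $\pi$-preimages of the points of $\pi(X)=\{xL:x\in X\}$, and $X$ meets each of them in exactly $c$ elements; hence $|X|=c\,|\pi(X)|$. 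Since $|X|$ is a power of $p$, both factors are powers of $p$, and in particular $|\pi(X)|$ is a $p$-power.

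It remains to see that the basic sets of $\mathcal{A}_S$ are exactly the images $\pi(X)$ with $X\in\mathcal{S}(\mathcal{A}_U)$. For this I would invoke the standard fact that the full $\pi$-preimage of any $\mathcal{A}_S$-set is an $L$-invariant $\mathcal{A}_U$-set. Given $\bar X\in\mathcal{S}(\mathcal{A}_S)$, the set $\pi^{-1}(\bar X)$ is then a disjoint union of basic sets $X_1,\dots,X_k$ of $\mathcal{A}_U$. Each $\underline{X_i}^{\pi}$ lies in $\mathcal{A}_S$ and is supported on $\pi(X_i)\subseteq\bar X$, so $\pi(X_i)$ is a union of basic sets of $\mathcal{A}_S$ contained in the single basic set $\bar X$; minimality of $\bar X$ forces $\pi(X_i)=\bar X$. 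Thus $\bar X=\pi(X_1)$ for a basic set $X_1$ of $\mathcal{A}_U$, and the previous paragraph gives that $|\bar X|$ is a power of $p$. As $\bar X$ was arbitrary, $\mathcal{A}_S$ is a $p$-$S$-ring.

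I expect the only genuinely non-formal point to be the claim that each basic set of $\mathcal{A}_S$ is a single image $\pi(X)$ rather than a union of several of them, equivalently that $\pi$-preimages of $\mathcal{A}_S$-sets are $\mathcal{A}_U$-sets. This is the standard behaviour of quotient $S$-rings; I would either cite it from the references on $S$-rings or verify it directly using $\underline{L}\in\mathcal{A}_U$ together with the multiplicity bookkeeping $\underline{X}^{\pi}=c\,\underline{\pi(X)}$ established in the counting step.
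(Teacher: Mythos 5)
Your argument is correct and is essentially the paper's own proof: the key step in both is that Lemma~\ref{intersection} makes $\lambda=|X\cap Lx|$ constant on $x\in X$, so $|\pi(X)|=|X|/\lambda$ is again a $p$-power. Your final paragraph merely re-derives the standard fact (already asserted in Section~2 when $\mathcal{A}_S$ is introduced) that the basic sets of $\mathcal{A}_S$ are exactly the images $\pi(X)$ for $X\in\mathcal{S}(\mathcal{A})$ with $X\subseteq U$, which the paper takes for granted.
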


\begin{proof}
From Lemma~\ref{intersection} it follows that for every $X\in \mathcal{S}(\mathcal{A})$ the number $\lambda=|X \cap Lx|$ does not depend on $x\in X$. So $\lambda$ divides $|X|$ and hence $\lambda$ is a $p$-power. Let $\pi:G\rightarrow G/L$ be the canonical epimorphism. Note that $|\pi(X)|=|X|/\lambda$ and hence $|\pi(X)|$ is a $p$-power. Therefore every basic set of $\mathcal{A}_S$ has a $p$-power size. Thus, $\mathcal{A}_S$ is a $p$-$S$-ring. The lemma is proved. 
\end{proof}

\begin{lemm}\cite[Proposition~2.13]{FK}\label{psring}
The following statements hold:

$(1)$ $|\O_{\theta}(\mathcal{A})|>1$;

$(2)$ there exists a chain of $\mathcal{A}$-subgroups $\{e\}=G_0<G_1<\ldots G_s=G$ such that $|G_{i+1}:G_i|=p$ for every $i\in \{0,\ldots,s-1\}$.
\end{lemm}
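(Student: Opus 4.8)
My overall strategy is to prove statement~$(1)$ first and then bootstrap it into statement~$(2)$ by an inductive descent through a suitable $\mathcal{A}$-section. For part~$(1)$, I want to show that a $p$-$S$-ring necessarily contains a nontrivial singleton basic set. The plan is to exploit the arithmetic constraint that $G$ is a $p$-group of order $p^n$ together with the fact that $\{e\}$ is always a basic set of size~$1$. Since the basic sets partition $G$ and each has $p$-power size, I would count modulo~$p$: the sizes of all basic sets sum to $|G|=p^n\equiv 0 \pmod p$, and the basic set $\{e\}$ contributes~$1$. If there were no other basic set of size~$1$, then every remaining basic set would have size divisible by~$p$, forcing the total $\sum_{X\in\mathcal{S}(\mathcal{A})}|X|\equiv 1\pmod p$, contradicting $p^n\equiv 0$. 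Hence there must exist at least one further singleton basic set, so $|\O_\theta(\mathcal{A})|>1$. The only care needed is the trivial case $n=0$, but $|G|>1$ is implicit (otherwise the chain statement is vacuous), and for $|G|=p$ we already have $\mathcal{A}=\mathbb{Z}G$.

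For part~$(2)$, I would argue by induction on $|G|$, using part~$(1)$ as the base mechanism. By part~$(1)$, the thin radical $\O_\theta(\mathcal{A})$ is a nontrivial $\mathcal{A}$-subgroup, and since it is a subgroup of the $p$-group $G$ it contains an element of order~$p$, which generates an $\mathcal{A}$-subgroup $G_1\leq \O_\theta(\mathcal{A})$ of order~$p$ (every singleton-generated subgroup is an $\mathcal{A}$-subgroup, as noted in Section~2, and $\langle x\rangle$ for $x$ of order $p$ has size $p$). Now I pass to the quotient section $S=G/G_1$, which is an $\mathcal{A}$-section since $G_1$ is a normal $\mathcal{A}$-subgroup ($G$ being abelian-or at least we only need $G_1\trianglelefteq G$, which holds as $G_1$ is central of order $p$ in a $p$-group, or simply because we may choose $G_1$ normal). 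By Lemma~\ref{psection}, $\mathcal{A}_S$ is again a $p$-$S$-ring over the smaller $p$-group $G/G_1$.

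Applying the induction hypothesis to $\mathcal{A}_S$ yields a chain of $\mathcal{A}_S$-subgroups
\[
\{\bar e\}=\overline{G}_0<\overline{G}_1<\cdots<\overline{G}_{s-1}=G/G_1
\]
with successive indices equal to~$p$. The final step is to pull this chain back along the canonical epimorphism $\pi:G\to G/G_1$: setting $G_{i+1}=\pi^{-1}(\overline{G}_i)$ for $i=0,\dots,s-1$ and $G_0=\{e\}$, each $G_{i+1}$ is an $\mathcal{A}$-subgroup (preimages of $\mathcal{A}_S$-subgroups under $\pi$ are $\mathcal{A}$-subgroups), the bottom index $|G_1:G_0|=p$ is the base we built, and each higher index $|G_{i+1}:G_i|=|\overline{G}_i:\overline{G}_{i-1}|=p$ is preserved by the correspondence theorem. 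Stringing these together gives the desired chain from $\{e\}$ to $G$ with all indices~$p$.

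The main obstacle I anticipate is not the counting in part~$(1)$, which is clean, but ensuring in part~$(2)$ that the chosen $\O_\theta(\mathcal{A})$-element of order $p$ really produces a \emph{normal} $\mathcal{A}$-subgroup $G_1$ so that the section $G/G_1$ is legitimately an $\mathcal{A}$-section to which Lemma~\ref{psection} applies; for general $p$-groups one must verify normality, though here it comes for free because the target group is abelian (see Lemma~\ref{psection}'s setting) and because $G_1\leq\O_\theta(\mathcal{A})$ sits inside the thin radical. The rest is the routine but necessary bookkeeping of transporting an $\mathcal{A}_S$-subgroup chain back through $\pi$ and checking index preservation.
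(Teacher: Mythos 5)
The paper does not prove this lemma at all: it is imported verbatim as \cite[Proposition~2.13]{FK}, so your self-contained argument is genuinely different in that it actually supplies a proof. Both halves of your plan are sound. The counting argument for $(1)$ is the standard one: the basic sets partition $G$, each has $p$-power size, $\{e\}$ contributes $1$, and if no other singleton existed the total would be $\equiv 1 \pmod p$, contradicting $p\mid |G|$ (for $|G|>1$, which is the only case of content). For $(2)$, the descent through $S=G/G_1$ with $G_1=\langle x\rangle$ for $x\in\O_\theta(\mathcal{A})$ of order $p$ works: $\{x\}\in\mathcal{S}(\mathcal{A})$ forces $\langle x\rangle=\langle\{x\}\rangle$ to be an $\mathcal{A}$-subgroup, Lemma~\ref{psection} keeps $\mathcal{A}_S$ a $p$-$S$-ring, and preimages of $\mathcal{A}_S$-subgroups under $\pi$ are $\mathcal{A}$-subgroups with indices preserved, so the pulled-back chain concatenates correctly with $\{e\}<G_1$.

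One inaccuracy to fix in your hedging about normality: it is \emph{not} true that an order-$p$ subgroup of a $p$-group is automatically central or normal (reflections in the dihedral group of order $8$ already refute this), so that fallback does not salvage the non-abelian case; nor is $\O_\theta(\mathcal{A})$ guaranteed to meet the center in a way your construction controls. Since Section~6 of the paper states the hypothesis only as ``$G$ is a $p$-group,'' your induction as written covers only abelian $G$. This is harmless here --- every application in the paper is to ($2$-$S$-rings over) elementary abelian groups, and the source \cite{FK} works in that setting --- but you should either add ``abelian'' to your hypotheses explicitly or route the general case through the theory of $p$-schemes as in \cite{Pon} rather than through a quotient by $\langle x\rangle$.
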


\begin{lemm}\label{minpring}
Let $G$ be an abelian group, $K \in \Sup_2^{\min}(G_\r)$, and $\mathcal{A}=V(K, G)$. Suppose that $H$ is an $\mathcal{A}$-subgroup of $G$ such that $G/H$ is a $p$-group for some prime $p$. Then $\mathcal{A}_{G/H}$ is a $p$-S-ring.
\end{lemm}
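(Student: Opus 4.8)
The plan is to exploit the minimality of $K$ in $(\Sup_2^{\min}(G_\r),\preceq_G)$ together with a Sylow argument on the action induced on $G/H$. Since $K\in\Sup_2(G_\r)$ we have $\aut(\mathcal{A})=K$, and because $G$ is abelian and $H$ is an $\mathcal{A}$-subgroup, $H\trianglelefteq G$ and $K$ permutes the $H$-cosets; thus the restriction homomorphism $\rho\colon K\to\sym(G/H)$, $k\mapsto k^{G/H}$, is defined, $K^{G/H}=\rho(K)$, and $\mathcal{A}_{G/H}=V(K^{G/H},G/H)$. The basic sets of $\mathcal{A}_{G/H}$ are exactly the orbits of $(K^{G/H})_{\bar{e}}$ on $G/H$, where $\bar{e}=H$. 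Hence it suffices to arrange that $K^{G/H}$ can be taken to be a $p$-group, for then the point stabilizer is a $p$-group and all its orbits have $p$-power size. The idea is to build a $2$-closed subgroup $K'\leq K$ with $K'\preceq_G K$ whose image in $\sym(G/H)$ is a $p$-group, and then to invoke minimality to force $K'=K$.

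Concretely, I would first note that $(G/H)_\r=\rho(G_\r)$ is a regular $p$-subgroup of $K^{G/H}$ (it is a $p$-group because $G/H$ is), and choose a Sylow $p$-subgroup $P$ of $K^{G/H}$ with $(G/H)_\r\leq P$. Set $\tilde{K}=\rho^{-1}(P)$; then $G_\r\leq\tilde{K}\leq K$ and $\rho(\tilde{K})=P$. The key step is to verify $\tilde{K}\preceq_G K$. Let $R$ be any $G$-regular subgroup of $K$. Since $R\cong G$ is abelian and transitive on $G$, its image $\rho(R)$ is an abelian transitive, hence regular, subgroup of $\sym(G/H)$; in particular $\rho(R)$ is a $p$-group of order $|G/H|$. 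By Sylow's theorem $\rho(R)$ is $K^{G/H}$-conjugate into $P$, and lifting the conjugating element through $\rho$ yields $\gamma\in K$ with $R^{\gamma}\leq\rho^{-1}(P)=\tilde{K}$, which is again $G$-regular. This is precisely the statement $\tilde{K}\preceq_G K$.

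I would then pass to the $2$-closure $K'=\tilde{K}^{(2)}$. As $\tilde{K}\leq K$ and $K=K^{(2)}$, we get $K'\leq K$ and $K'\in\Sup_2(G_\r)$. Since $\tilde{K}\leq K'$, every $G$-regular subgroup of $\tilde{K}$ is one of $K'$, so the conjugation above also gives $K'\preceq_G K$. Now minimality of $K$ in $\Sup_2^{\min}(G_\r)$ forces $K'=K$. Because $\tilde{K}\approx_2\tilde{K}^{(2)}=K$, we have $\mathcal{A}=V(K,G)=V(\tilde{K},G)$, whence $\mathcal{A}_{G/H}=V(\tilde{K}^{G/H},G/H)=V(P,G/H)$. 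As $P$ is a $p$-group, the stabilizer $P_{\bar{e}}$ is a $p$-group and its orbits, i.e.\ the basic sets of $\mathcal{A}_{G/H}$, all have $p$-power size; thus $\mathcal{A}_{G/H}$ is a $p$-$S$-ring.

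The main obstacle I anticipate is the bookkeeping around $2$-closure: the subgroup $\tilde{K}$ produced by the Sylow argument need not itself be $2$-closed, so I must replace it by $\tilde{K}^{(2)}$ before applying minimality, and then argue that (i) passing to $\tilde{K}^{(2)}$ preserves the relation $\preceq_G K$ (which works because enlarging a group only enlarges its family of $G$-regular subgroups), and (ii) the conclusion $\tilde{K}^{(2)}=K$ still lets me compute $\mathcal{A}_{G/H}$ from $\tilde{K}$ rather than from $K'$ (legitimate because $2$-equivalent groups define the same $S$-ring). Everything else — transitive plus abelian implies regular, Sylow conjugacy, and lifting conjugators through $\rho$ — should be routine.
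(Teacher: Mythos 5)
Your argument is correct. Note, however, that the paper does not actually prove this lemma: its proof is the single line ``The statement of the lemma follows from \cite[Lemma~5.2]{KM}'', so the real content is outsourced to Kov\'acs--Muzychuk. What you have written is a self-contained proof of that outsourced fact, and it is the expected one: pass to the induced group $K^{G/H}$, observe that the image of any $G$-regular subgroup of $K$ is an abelian transitive, hence regular, $p$-subgroup of $\sym(G/H)$, pull back a Sylow $p$-subgroup $P\geq (G/H)_\r$ to get $\tilde K=\rho^{-1}(P)\preceq_G K$ via Sylow conjugacy, replace $\tilde K$ by its $2$-closure (monotonicity of $(\cdot)^{(2)}$ keeps it inside $K=K^{(2)}$, and enlarging only helps $\preceq_G$), and invoke minimality of $K$ in $\Sup_2(G_\r)$ to conclude $\tilde K^{(2)}=K$, so that $\mathcal{A}=V(\tilde K,G)$ and $\mathcal{A}_{G/H}=V(P,G/H)$ has basic sets that are orbits of the $p$-group $P_{\bar e}$. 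All the steps you flag as delicate (lifting the Sylow conjugator through $\rho$, the $2$-closure bookkeeping, and the fact that $2$-equivalent overgroups of $G_\r$ determine the same $S$-ring) are handled correctly. The only thing your write-up buys beyond the paper is independence from the external reference; conversely, the citation spares the paper this page of routine verification.
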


\begin{proof}
The statement of the lemma follows from~\cite[Lemma~5.2]{KM}.
\end{proof}

\section{$S$-rings over over an abelian group of non-powerful order}

A number $n$ is called \emph{powerful} if $p^2$ divides $n$ for every prime divisor $p$ of $n$. From now throughout this subsection $G=H\times P$, where $H$ is an abelian group and $P\cong C_p$, where $p$ is a prime coprime to $|H|$. Clearly, $|G|$ is non-powerful. Let $\mathcal{A}$ be an $S$-ring over $G$, $H_1$ a maximal $\mathcal{A}$-subgroup contained in $H$, and $P_1$ the least $\mathcal{A}$-subgroup containing $P$. Note that $H_1P_1$ is an $\mathcal{A}$-subgroup.


\begin{lemm}\cite[Lemma~6.3]{KR2}\label{nonpower2}
In the above notations, if $H_1 \neq (H_1P_1)_{p'}$, the Hall $p'$-subgroup of $H_1P_1$, then $\mathcal{A}_{H_1P_1}=\mathcal{A}_{H_1} \star \mathcal{A}_{P_1}$. 
\end{lemm}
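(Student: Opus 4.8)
The plan is to work inside the $\mathcal{A}$-subgroup $M=H_1P_1$ and to verify directly the three defining conditions of the star product $\mathcal{A}_{H_1}\star\mathcal{A}_{P_1}$, taking $V=H_1$ and $W=P_1$. Since $p$ is coprime to $|H|$, the subgroup $P$ is the full Sylow $p$-subgroup of $G$, so from $P\le P_1$ we get $P_1=A\times P$ with $A=P_1\cap H$, and $M=N\times P$ with $N=(H_1P_1)_{p'}=H_1A$ and $D:=H_1\cap P_1=H_1\cap A$. In these terms the hypothesis $H_1\neq N$ says exactly that $A\not\le H_1$, equivalently $D\lneq A$. Condition~$(1)$ of the star product, $D\trianglelefteq P_1$, is automatic because $G$ is abelian, so the whole content lies in conditions~$(2)$ and~$(3)$.

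First I would record two reductions coming from the extremal choice of $H_1$ and $P_1$. If a basic set $T$ is contained in $H$ (equivalently, all its elements have trivial $p$-part), then $\langle T\rangle\le H$ is an $\mathcal{A}$-subgroup of $H$, hence $\langle T\rangle\le H_1$ and $T\subseteq H_1$; consequently no basic set can meet $N\setminus H_1$ without leaving $N$, so every basic set occurring in condition~$(2)$ or~$(3)$ has an element of nontrivial $p$-part. Second, if a basic set $T$ has an element of nontrivial $p$-part, then $\langle T\rangle$ contains $P$, hence contains the least $\mathcal{A}$-subgroup $P_1\supseteq P$; in particular any such $T$ lying inside $P_1$ satisfies $\langle T\rangle=P_1$.

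The engine for controlling the $p$-part of basic sets is the coprimality: by Schur's classical multiplier theorem for $S$-rings, for $\gcd(m,|M|)=1$ the power map $x\mapsto x^m$ permutes the basic sets of $\mathcal{A}_M$ (modulo $p$ the heart of this is the Frobenius identity $\underline{X}^{\,p}\equiv\sum_{x\in X}x^p$ in $\mathbb{F}_pM$). Taking $m\equiv 1\pmod{|N|}$ and letting $m$ vary modulo $p$, the resulting multipliers act on $M=N\times P$ as $\mathrm{id}_N\times\aut(P)$, with orbits the singletons $\{(\nu,e)\}$ and the sets $\{\nu\}\times(P\setminus\{e\})$. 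A basic set that contains any element of $N$ is fixed setwise by all these maps (that $N$-element being fixed, while basic sets are disjoint), hence is a union of such orbits; this pins down the shape of every basic set meeting $N$.

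Now the hypothesis enters: choosing $a\in A\setminus H_1$, its basic set $Y$ lies in $P_1$ (since $a\in P_1$) but not in $H$ (since $a\notin H_1$), so by the previous paragraph $Y$ contains $a$ together with a whole set $\{\nu\}\times(P\setminus\{e\})$, is invariant under the above multipliers, and satisfies $\langle Y\rangle=P_1$. This linking set is what forces the product structure, and I would use it to establish condition~$(2)$, namely that every basic set in $P_1\setminus H_1$ is a union of $D$-cosets, i.e. $D\le\rad(T)$, and condition~$(3)$, namely that every basic set in $M\setminus(H_1\cup P_1)$ factors as $R\cdot S$ with $R\in\mathcal{S}(\mathcal{A}_{H_1})$ and $S\in\mathcal{S}(\mathcal{A}_{P_1})$. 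The main obstacle is exactly this last step: the multiplier argument governs only the $p$-direction, whereas conditions~$(2)$ and~$(3)$ amount to a ``no diagonal'' statement separating the $H_1$-direction from the $A$-direction, and it is here that $H_1\neq N$ is indispensable --- without it $\mathcal{A}_M$ may be a genuine (non-star) generalized wreath product, as already happens for $\mathcal{A}_M=\mathbb{Z}M$, where $H_1=N$.
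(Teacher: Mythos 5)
The paper does not prove this lemma at all --- it is quoted verbatim from \cite[Lemma~6.3]{KR2} --- so the only question is whether your argument is complete on its own terms, and it is not. Your setup is sound: the identifications $P_1=A\times P$, $N=H_1A$, $D=H_1\cap A$, the reformulation of the hypothesis as $D\lneq A$, the two reductions from the maximality of $H_1$ and the minimality of $P_1$, and the use of Schur's multiplier theorem with $m\equiv 1\pmod{|N|}$ are all correct and are indeed the right ingredients. But the proof stops exactly where the lemma begins. Conditions~$(2)$ and~$(3)$ of the star product are the entire content of the statement, and you write only that you ``would use'' the linking set $Y$ to establish them, conceding that ``the main obstacle is exactly this last step.'' No argument is given for why $D\le\rad(T)$ for basic sets $T\subseteq P_1\setminus H_1$, nor for why basic sets outside $H_1\cup P_1$ factor as $RS$.

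The gap is not merely one of exposition: the multiplier argument you develop constrains only basic sets that meet $N$ (those are fixed by the maps $\mathrm{id}_N\times\aut(P)$ because they contain a fixed point), whereas every basic set relevant to condition~$(3)$, and most of those relevant to condition~$(2)$, consist entirely of elements with nontrivial $p$-part and so are merely permuted, not fixed, by these multipliers. To finish one needs a genuinely new step --- for instance, expanding the products $\underline{Y}\,\underline{T}$ in $\mathcal{A}_M$ for your linking set $Y$ and a basic set $T$ outside $N$, or comparing the projections of $T$ to $M/H_1$ and $M/(A\times P)$ via Lemma~\ref{intersection} --- and it is precisely this step that encodes why $H_1\neq(H_1P_1)_{p'}$ rules out a diagonal (generalized wreath) configuration. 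As it stands, the proposal is a plausible plan, not a proof.
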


\begin{lemm}\cite[Proposition 15]{MS}\label{nonpower3}
In the above notations, if $\mathcal{A}_{H_1P_1/H_1}\cong \mathbb{Z}C_p$ then $\mathcal{A}_{H_1P_1}=\mathcal{A}_{H_1} \star \mathcal{A}_{P_1}$. 
\end{lemm}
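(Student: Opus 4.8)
The plan is to reduce the desired star decomposition to a single radical condition and then read off the two defining properties of the star product from it.

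First I would translate the hypothesis. Since $\mathcal{A}_{H_1P_1/H_1}\cong\mathbb{Z}C_p$, we have $|H_1P_1:H_1|=p$, and because this quotient $S$-ring has rank $p$, every basic set of $\mathcal{A}_{H_1P_1}$ lies in a single $H_1$-coset (the images of the basic sets under $H_1P_1\to H_1P_1/H_1$ are the basic sets of $\mathcal{A}_{H_1P_1/H_1}$, hence singletons). As $p$ is coprime to $|H|$, the group $P$ is the Sylow $p$-subgroup of $G$, so $P_1=P\times D$ with $D:=P_1\cap H=(P_1)_{p'}$. Writing $|H_1P_1|=|H_1(P_1\cap H)|\cdot p$ and comparing with $|H_1P_1|=p|H_1|$ forces $P_1\cap H\le H_1$; hence $D=H_1\cap P_1$ is an $\mathcal{A}$-subgroup (an intersection of $\mathcal{A}$-subgroups), $P_1=D\times P$, and $H_1P_1=H_1\times P$, with $P_1\cap H_1a^i=Da^i$ for each $i$.

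The core claim I would isolate is: $D\le\rad(T)$ for every basic set $T$ of $\mathcal{A}_{H_1P_1}$ lying outside $H_1$; equivalently, $\mathcal{A}_{H_1P_1}=\mathcal{A}_{H_1}\wr_{H_1/D}\mathcal{A}_{H_1P_1/D}$. Granting this, the star product follows quickly. Condition $(1)$ is automatic since $G$ is abelian. For condition $(2)$, a basic set $T\subseteq P_1\setminus H_1$ lies in a single coset $Da^i$ with $a^i\notin D$ (by the single-coset property and $P_1\cap H_1a^i=Da^i$), and $D\le\rad(T)$ forces $T=Da^i$, a $D$-coset. For condition $(3)$, let $T$ be a basic set outside $H_1\cup P_1$, say $T\subseteq H_1a^i$; then $Da^{-i}\in\mathcal{S}(\mathcal{A}_{P_1})$ by condition $(2)$, the support $X:=Ta^{-i}$ of $\underline{T}\,\underline{Da^{-i}}$ is a $D$-invariant $\mathcal{A}_{H_1}$-set (using that the support of an element of $\mathcal{A}$ is an $\mathcal{A}$-set, by the Schur--Wielandt principle), and for any $X_0\in\mathcal{S}(\mathcal{A}_{H_1})$ with $X_0\subseteq X$ the support $X_0Da^i$ of $\underline{X_0}\,\underline{Da^i}$ satisfies $X_0Da^i\subseteq Xa^i=T$, so it equals $T$ because $T$ is a basic set. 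Thus $T=X_0\cdot(Da^i)$ with $X_0\in\mathcal{S}(\mathcal{A}_{H_1})$ and $Da^i\in\mathcal{S}(\mathcal{A}_{P_1})$, which is exactly condition $(3)$.

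It remains to prove the core claim, and this is the step I expect to be the main obstacle. Minimality of $P_1$ gives the weaker inclusion $D\le\langle T\rangle$: for a basic set $T$ outside $H_1$, a suitable power of an element of $T$ shows $\langle T\rangle\cap P\neq\{e\}$, so $\langle T\rangle$ is an $\mathcal{A}$-subgroup containing $P$, whence $\langle T\rangle\supseteq P_1\supseteq D$. The difficulty is upgrading $D\le\langle T\rangle$ to $D\le\rad(T)$, i.e. showing that $T$ meets each $D$-coset it touches in a \emph{full} coset. Here the maximality of $H_1$ must enter in an essential way, since minimality of $P_1$ alone is insufficient: over $P_1=D\times P$ one can write down candidate basic sets $Za$ with $\langle Z\rangle=D$ but $\rad(Za)\lneq D$. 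I would attack this by applying Lemma~\ref{intersection} with the $\mathcal{A}$-subgroup $P_1$, which shows $|T\cap P_1x|$, equivalently $|T\cap Dx|$, is a constant $\lambda$ on $T$, so the claim is exactly $\lambda=|D|$; then, passing to the quotient by $\rad(T)$ and to a minimal $\mathcal{A}$-subgroup inside $D$, reduce to the case where $D$ is a minimal $\mathcal{A}$-subgroup with $\rad(T)\cap D=\{e\}$, and derive a contradiction with the maximality of $H_1$ in $H$ (using the Schur--Wielandt principle and, when $p$ is odd, the multiplier automorphism fixing $H$ pointwise and raising $a$ to a primitive root modulo $p$ to propagate the splitting pattern uniformly across all nontrivial cosets). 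Establishing this contradiction is the technical heart of the lemma.
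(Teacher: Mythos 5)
You have set the problem up correctly: the hypothesis $\mathcal{A}_{H_1P_1/H_1}\cong\mathbb{Z}C_p$ does force every basic set of $\mathcal{A}_{H_1P_1}$ into a single $H_1$-coset, your computation showing $D:=P_1\cap H=H_1\cap P_1$ and $P_1=D\times P$ is sound, and your reduction of the star decomposition to the single statement ``$D\le\rad(T)$ for every basic set $T$ of $\mathcal{A}_{H_1P_1}$ outside $H_1$'' is both correct and genuinely equivalent to the conclusion (the star product conversely implies it via conditions $(2)$ and $(3)$). The derivations of conditions $(2)$ and $(3)$ from that claim, using the Schur--Wielandt principle on the supports of $\underline{T}\,\underline{Da^{-i}}$ and $\underline{X_0}\,\underline{Da^i}$, also check out.

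The genuine gap is that the core claim itself is never proved, and as you yourself observe it is the entire mathematical content of the lemma: everything before it is bookkeeping, and everything after it is routine. What you offer in its place --- constancy of $\lambda=|T\cap Dx|$ from Lemma~\ref{intersection}, passing to quotients by $\rad(T)$ and to a minimal $\mathcal{A}$-subgroup of $D$, and invoking the multiplier automorphisms $h\mapsto h$, $a\mapsto a^k$ together with the maximality of $H_1$ ``to derive a contradiction'' --- is a list of plausible ingredients, not an argument; in particular it is not explained how the maximality of $H_1$ in $H$ (a condition about $\mathcal{A}$-subgroups of $H$ possibly far from $H_1P_1$) interacts with a hypothetical basic set $T\subseteq Da^i$ with $\lambda<|D|$, and the multipliers only permute the cosets $Da^i$ among themselves, so by themselves they cannot rule out a splitting of $Da^i$ that is replicated uniformly across all $i\ne 0$. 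Note also that the paper does not prove this lemma either: it imports it wholesale as \cite[Proposition~15]{MS}, so a self-contained proof would have to supply exactly the step you have deferred. As it stands the proposal is an accurate reformulation of the lemma plus a correct ``easy direction,'' but not a proof.
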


\begin{lemm}\cite[Lemma 6.2]{EKP}\label{nonpower4}
In the above notations, suppose that $H_1<H$. Then one of the following statements holds:

$(1)$ $\mathcal{A}=\mathcal{A}_{H_1}\wr \mathcal{A}_{G/H_1}$ with $\rk(\mathcal{A}_{G/H_1})=2$;

$(2)$ $\mathcal{A}=\mathcal{A}_{H_1P_1}\wr_S \mathcal{A}_{G/P_1}$, where $S=H_1P_1/P_1$ and $P_1<G$.
\end{lemm}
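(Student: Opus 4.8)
The plan is to classify the basic sets of $\mathcal{A}$ according to their position relative to $H_1$ and to the cosets of $P$ and $P_1$, and to read off the two alternatives from this. I would begin with two reductions. First, the union $Y$ of all basic sets contained in $H$ is an $\mathcal{A}$-set with $\langle Y\rangle\leq H$, so $\langle Y\rangle$ is an $\mathcal{A}$-subgroup of $H$ and hence $\langle Y\rangle\leq H_1$ by the maximality of $H_1$; as $H_1$ is itself a union of basic sets lying in $H$, we get $Y=H_1$. Thus the basic sets inside $H$ are exactly those partitioning $H_1$, and every basic set $X\not\subseteq H_1$ contains some $(h,t)$ with $t\neq e$. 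Second, since $|H|$ and $p$ are coprime, a suitable power of $(h,t)$ is a generator of $P$, whence $P\leq\langle X\rangle$ and therefore $P_1\leq\langle X\rangle$ for every basic set $X\not\subseteq H_1$. I would also record the clean consequence of $H_1<H$ that $H$ is not an $\mathcal{A}$-subgroup and, since $|G:H|=p$ is prime, that $G$ is the only $\mathcal{A}$-subgroup containing $H$.

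Next I would isolate conclusion $(1)$ in the form: $G\setminus H_1$ is a single basic set of $\mathcal{A}$. Indeed, if this holds then $H_1\leq\rad(G\setminus H_1)$ automatically and $\mathcal{A}=\mathcal{A}_{H_1}\wr\mathcal{A}_{G/H_1}$ with $\rk(\mathcal{A}_{G/H_1})=2$, which is exactly $(1)$. So I would assume from now on that there are at least two basic sets outside $H_1$ and prove conclusion $(2)$, namely that $P_1<G$ and that $P_1\leq\rad(X)$ for every basic set $X$ lying outside $U:=H_1P_1$; by definition this says precisely that $\mathcal{A}=\mathcal{A}_{H_1P_1}\wr_{S}\mathcal{A}_{G/P_1}$ with $S=H_1P_1/P_1$.

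The engine for the radical condition would be Lemma~\ref{intersection} applied to the $\mathcal{A}$-subgroup $P_1$: for a basic set $X$ outside $U$ the number $|X\cap P_1x|$ is independent of $x\in X$. Writing $x=(h,t)$ and using $P_1=(P_1\cap H)\times P$, the coset $P_1x$ is a ``box'' $(P_1\cap H)h\times P$, and the task is to show that $X$ meets each such box in full $P$-columns. Once this is established, $P\leq\rad(X)$; since $\rad(X)$ is an $\mathcal{A}$-subgroup containing $P$ it then contains $P_1$, as required. Here the coprimality of $|H|$ and $p$ is what should force the columns to be full: a basic set meeting a box in a proper, nonempty set of partial columns ought to create either a nontrivial $\mathcal{A}$-subgroup inside $H$ (impossible, as the maximal such subgroup is $H_1$) or the degenerate situation in which $G\setminus H_1$ collapses to one basic set, contrary to our standing assumption.

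I expect the realization of this last step to be the main obstacle, and it is exactly where the coprimality is indispensable. The difficulty is concentrated in the case $P\lneq P_1$, in which the Sylow subgroup $P$ is itself not an $\mathcal{A}$-subgroup and $P_1\cap H$ need not lie in $H_1$; then one cannot pass to the quotient $G/P$ and must instead control the entanglement of the $p$-part with the $p'$-part through $P_1$. Resolving this — both the full-column property and the inequality $P_1<G$ — seems to require the finer structure of $\mathcal{A}_{H_1P_1}$, of the kind given by the star-product decompositions available in this coprime setting; feeding that structure into the distribution constraints from Lemma~\ref{intersection} is what should rule out the partial-column possibility and yield the stated dichotomy.
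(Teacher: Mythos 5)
Your reductions are sound: the identification of the union of basic sets inside $H$ with $H_1$, the observation that every basic set $X\not\subseteq H_1$ satisfies $P_1\leq\langle X\rangle$ (via $(h,t)^{|H|}$ generating $P$), the equivalence of conclusion $(1)$ with ``$G\setminus H_1$ is a single basic set'', and the reduction of conclusion $(2)$ to showing that $P_1<G$ and that every basic set outside $H_1P_1$ is a union of $P$-cosets (whence $P\leq\rad(X)$ and so $P_1\leq\rad(X)$ by minimality of $P_1$). All of that is correct and is the right frame for the statement.

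However, the proof has a genuine gap, and you acknowledge it yourself: the entire substance of the lemma is the implication ``if $G\setminus H_1$ splits into at least two basic sets, then $P_1<G$ and every basic set outside $H_1P_1$ has full $P$-columns,'' and this is never actually established. Lemma~\ref{intersection} alone cannot do it: the constancy of $|X\cap P_1x|$ over $x\in X$ is perfectly consistent with $X$ meeting every $P_1$-coset in exactly one element (the ``all columns partial'' configuration), so no contradiction arises from that count by itself. Your proposed dichotomy for a partial column --- that it ``ought to create a nontrivial $\mathcal{A}$-subgroup inside $H$'' --- is not a contradiction as stated ($H_1$ may well be a nontrivial $\mathcal{A}$-subgroup inside $H$; what you would need is an $\mathcal{A}$-subgroup of $H$ strictly larger than $H_1$, and no mechanism producing one is given). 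Likewise $P_1<G$ is asserted as part of the goal but no argument is offered for why $P_1=G$ would force $G\setminus H_1$ to be a single basic set. Note also that the paper itself offers no proof to lean on here: Lemma~\ref{nonpower4} is imported verbatim from \cite[Lemma~6.2]{EKP}, so a self-contained argument would have to reproduce the machinery of that paper (the analysis of the quotient $\mathcal{A}_{G/H_1}$, in which the maximal $\mathcal{A}$-subgroup of $H/H_1$ is trivial, together with the multiplier/counting arguments specific to the coprime setting), none of which appears in your sketch.
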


\section{$S$-rings over $C_2^n$, $n\leq 5$}

All $S$-rings over the groups $C_2^n$, where $n\leq 5$, were enumerated with the help of the GAP package COCO2P~\cite{GAP}. The lists of all $S$-rings over these  groups are available  on the web-page~\cite{Reich} (see also~\cite{Ziv}). The next lemma is an immediate consequence of the above computational results (see also~\cite[Theorem~1.2]{EKP}). 

\begin{lemm}\label{schurian}
Every $S$-ring over $C_2^n$, where $n\leq 5$, is schurian.
\end{lemm}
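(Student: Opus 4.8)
The plan is to prove the lemma by a complete, mechanical enumeration, since for a fixed finite group $G$ the set of $S$-rings over $G$ is finite: each $S$-ring is uniquely determined by its basic-set partition $\mathcal{S}(\mathcal{A})$ of $G$, and only finitely many partitions of $G$ satisfy the axioms $(1)$--$(3)$ together with closure under structure constants. For each of the five groups $C_2^n$, $n\leq 5$, I would first generate the full list of $S$-rings with the package COCO2P~\cite{GAP}, and then test each one for schurity individually.

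The schurity test itself is reduced to a finite, verifiable condition by Eq.~$(2)$. For an arbitrary $S$-ring $\mathcal{A}$ over $G$ one has $V(\aut(\mathcal{A}),G)\geq \mathcal{A}$, with equality if and only if $\mathcal{A}$ is schurian; equivalently, $\mathcal{A}$ is schurian precisely when the orbits of the point stabilizer $\aut(\mathcal{A})_e$ on $G$ are exactly the basic sets of $\mathcal{A}$. Thus for each enumerated $\mathcal{A}$ the task is to compute the automorphism group $\aut(\mathcal{A})$ as defined in Section~3 (which COCO2P obtains as the automorphism group of the associated coherent configuration), form the orbits of $\aut(\mathcal{A})_e$ on $G$, and compare them with $\mathcal{S}(\mathcal{A})$. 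Confirming this equality in every case proves the lemma, and this is what the tabulated data on the web-page~\cite{Reich} record.

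To cut down the amount of direct computation one could first invoke the product decompositions of Section~5: if $\mathcal{A}$ is a nontrivial generalized wreath or star product, then its schurity follows from that of the smaller factors and the relevant section (cf.~\cite[Corollary~10.3]{MP2}), so it would suffice to certify schurity for the indecomposable $S$-rings, of which there are far fewer. The main obstacle is nevertheless the scale of the case $n=5$, where $|G|=32$ forces both a long list of $S$-rings and a costly $2$-closure computation for each of them; this is exactly the step that is only practical with an efficient tool such as COCO2P. For $n\leq 4$ the verification is light, and it is moreover consistent with the structural result~\cite[Theorem~1.2]{EKP}.
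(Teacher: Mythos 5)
Your proposal is correct and is essentially the paper's own argument: the lemma is justified there as an immediate consequence of the COCO2P enumeration of all $S$-rings over $C_2^n$, $n\leq 5$ (together with the data of~\cite{Reich} and the reference to~\cite[Theorem~1.2]{EKP}), and your explicit schurity test via Eq.~(2) --- comparing $\orb(\aut(\mathcal{A})_e,G)$ with $\mathcal{S}(\mathcal{A})$ --- is exactly what such a verification amounts to. The only caution is that your optional shortcut reducing to indecomposable $S$-rings via~\cite[Corollary~10.3]{MP2} requires the additional $2$-minimality hypothesis on the section, so it cannot be applied blindly; but since it is not needed for the main brute-force argument, this does not affect correctness.
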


The main goal of this section is to describe $2$-$S$-rings over $C_2^n$, where $n\leq 5$, using computational results and to check that all $S$-rings over the above groups are $\CI$-$S$-rings. From now until the end of the section $G$ is an elementary abelian $2$-group of rank~$n$ and $\mathcal{A}$ is a $2$-$S$-ring over $G$.

\begin{lemm}\label{p3}
Let $n\leq 3$. Then $\mathcal{A}$ is cyclotomic. Moreover, $\mathcal{A}$ is Cayley minimal except for the case when $n=3$ and $\mathcal{A} \cong \mathbb{Z}C_2 \wr \mathbb{Z}C_2\wr \mathbb{Z}C_2$.
\end{lemm}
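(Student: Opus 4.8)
The plan is to reduce the statement to a finite check over the explicitly known $2$-$S$-rings. The structural constraints on a $2$-$S$-ring are strong: every basic set has size a power of $2$, and by Lemma~\ref{psring}$(1)$ the thin radical $\O_\theta(\mathcal{A})$ is a nontrivial $\mathcal{A}$-subgroup, so $\mathcal{A}$ has at least one basic set of size $1$ besides $\{e\}$. For $n\le 2$ this already pins down $\mathcal{A}$: over $C_2$ only $\mathbb{Z}C_2$ occurs, and over $C_2^2$ the nonidentity basic sets have sizes $1$ or $2$ summing to $3$, giving either $\mathbb{Z}C_2^2$ or a wreath product $\mathbb{Z}C_2\wr\mathbb{Z}C_2$. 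For $n=3$ I would list the admissible multisets of basic-set sizes (nonidentity parts in $\{1,2,4\}$ summing to $7$, with the number of nontrivial singletons equal to $|\O_\theta(\mathcal{A})|-1\in\{1,3,7\}$): this leaves exactly the patterns $1^7$, $1^3 2^2$, $1^3 4$, $1\,2^3$ and $1\,2\,4$, corresponding respectively to $\mathbb{Z}C_2^3$, the generalized wreath product with $\O_\theta(\mathcal{A})\cong C_2^2$, $\mathbb{Z}C_2^2\wr\mathbb{Z}C_2$, $\mathbb{Z}C_2\wr\mathbb{Z}C_2^2$ and $\mathbb{Z}C_2\wr\mathbb{Z}C_2\wr\mathbb{Z}C_2$. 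The completeness of this enumeration is confirmed by the computational lists already cited.

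For cyclotomicity I would exhibit in each case an explicit group $K\le\aut(G)=\GL(n,2)$ with $\orb(K,G)=\mathcal{S}(\mathcal{A})$. In each case $K$ is generated by transvections adapted to the relevant subgroup chain: $K=1$ for $\mathbb{Z}G$; a single transvection or a pair of commuting transvections for the patterns $1^3 2^2$, $1^3 4$, $1\,2^3$; and the Borel subgroup $B\le\GL(3,2)$ of upper unitriangular matrices (of order $8$) for the iterated wreath $\mathbb{Z}C_2\wr\mathbb{Z}C_2\wr\mathbb{Z}C_2$, whose orbits on $C_2^3$ are precisely the flag-adapted sets of sizes $1,1,2,4$. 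Thus every $2$-$S$-ring here is cyclotomic.

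For Cayley minimality I would, in each case, identify $M=\aut_G(\mathcal{A})$ as the stabilizer in $\GL(n,2)$ of the basic-set partition (the largest group Cayley equivalent to $K$) and test whether some proper subgroup of $M$ has the same orbits. For $\mathbb{Z}G$ Cayley minimality is already noted in the text. For every remaining pattern with $n\le 3$ except the last, $M$ is an elementary abelian group of transvections for which every proper subgroup splits at least one of the nonsingleton orbits, so no proper subgroup is Cayley equivalent to $M$ and Cayley minimality holds. The exceptional case is $\mathbb{Z}C_2\wr\mathbb{Z}C_2\wr\mathbb{Z}C_2$: here $M=B$ has order $8$, but $B$ contains a cyclic subgroup $\langle g\rangle\cong C_4$ (for instance $g:e_1\mapsto e_1,\ e_2\mapsto e_1+e_2,\ e_3\mapsto e_2+e_3$) whose orbits on $C_2^3$ are again the four sets of sizes $1,1,2,4$. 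Since $\langle g\rangle\approx_{\cay}B=\aut_G(\mathcal{A})$ while $\langle g\rangle\neq\aut_G(\mathcal{A})$, this $S$-ring fails to be Cayley minimal, which gives the single exception asserted.

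The main obstacle is the Cayley-minimality analysis, and specifically isolating the exceptional ring: one must verify both that the full partition stabiliser $M$ equals the order-$8$ Borel and that a strictly smaller group ($C_4$) already realises the same orbit partition, whereas in the superficially similar wreath products $\mathbb{Z}C_2^2\wr\mathbb{Z}C_2$ and $\mathbb{Z}C_2\wr\mathbb{Z}C_2^2$ the analogous smaller groups always split one of the orbits. Confirming that the size-pattern enumeration is exhaustive — in particular ruling out patterns such as $1^5 2$ via the thin-radical order constraint — is the other point requiring care.
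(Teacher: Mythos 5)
Your proof is correct, but it takes a genuinely different route from the paper: the paper disposes of both claims in one line by citing Lemmas~5.2 and~5.3 of~\cite{KR2}, whereas you give a self-contained argument. Your enumeration of the five isomorphism types of $2$-$S$-rings over $C_2^3$ via the admissible size patterns (using Lemma~\ref{psring}(1) to force $|\O_\theta(\mathcal{A})|\in\{2,4,8\}$ and the fact that a basic set of size $2$ outside the thin radical is a coset of an order-$2$ subgroup of it) is sound, the explicit transvection groups and the unitriangular group realize each ring as a cyclotomic one, and your $C_4$ inside the order-$8$ unitriangular group with the same orbits $1,1,2,4$ is exactly what makes $\mathbb{Z}C_2\wr\mathbb{Z}C_2\wr\mathbb{Z}C_2$ the unique non-Cayley-minimal case. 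What your approach buys is transparency and independence from~\cite{KR2} (and from the computer lists, except for the optional cross-check you mention); what the paper's citation buys is brevity and consistency with the rest of Section~8, which leans on the same computational enumeration for $n=4,5$ where a hand analysis of this kind would no longer be feasible. One small wording point: $\aut_G(\mathcal{A})$ is the subgroup of $\GL(n,2)$ fixing \emph{each} basic set setwise (the blockwise stabilizer), not the stabilizer of the partition as a whole --- in the pattern $1^3 2^2$ the two doubletons can be swapped by an automorphism of $G$ that is an isomorphism but not an automorphism of $\mathcal{A}$; your subsequent computations use the blockwise stabilizer, so nothing breaks, but the phrase should be adjusted.
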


\begin{proof}
The first part of the lemma follows from~\cite[Lemma~5.2]{KR2}; the second part follows from~\cite[Lemma~5.3]{KR2}.
\end{proof}

Analyzing the lists of all $S$-rings over $C_2^4$ and $C_2^5$ available on the web-page~\cite{Reich}, we   conclude that up to isomorphism there are exactly nineteen $2$-$S$-rings over $G$ if $n=4$ and there are exactly one hundred $2$-$S$-rings over $G$ if $n=5$. It can can be established by inspecting the above $2$-$S$-rings one after the other that there are exactly fifteen decomposable and four indecomposable $2$-$S$-rings over $G$ if $n=4$ and there are exactly ninety six decomposable and four indecomposable $2$-$S$-rings over $G$ if $n=5$. 

\begin{lemm}\label{p45}
Let $n\in\{4,5\}$ and $\mathcal{A}$ indecomposable. Then $\mathcal{A}$ is normal. If in addition $n=5$ then $\mathcal{A}\cong \mathbb{Z}C_2\otimes \mathcal{A}^{\prime}$, where $\mathcal{A}^{\prime}$ is indecomposable $2$-$S$-ring over $C_2^4$. 
\end{lemm}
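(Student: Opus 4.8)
The plan is to handle the two ranks separately and to derive the rank-$5$ assertions from the rank-$4$ ones via the tensor decomposition. Both parts ultimately rest on the enumeration of all $2$-$S$-rings over $C_2^4$ and $C_2^5$ obtained from COCO2P~\cite{GAP} and listed in~\cite{Reich}, which provides exactly four indecomposable $2$-$S$-rings in each rank; it therefore suffices to verify the statements on this explicit finite list. Since $\aut(G)$ normalizes $G_{\r}$, the property of being normal is invariant under Cayley isomorphism, so it is enough to check one representative of each isomorphism class.

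First I would treat $n=4$. For each of the four indecomposable $2$-$S$-rings $\mathcal{A}$ over $G=C_2^4$ I would extract $\aut(\mathcal{A})$ from the data (equivalently, the $2$-closure $K^{(2)}=\aut(\mathcal{A})$ of a group $K\in\Sup(G_{\r})$ with $\mathcal{A}=V(K,G)$, which exists by Lemma~\ref{schurian}) and confirm that $\aut(\mathcal{A})$ lies inside the holomorph $\aut(G)\ltimes G_{\r}$; this containment is exactly the condition $G_{\r}\trianglelefteq\aut(\mathcal{A})$, i.e. normality.

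For $n=5$ I would first pin down the tensor structure. Inspecting the four indecomposable $2$-$S$-rings over $G=C_2^5$, I expect each to contain a thin involution $x$, so that $V:=\langle x\rangle\cong C_2$ is an $\mathcal{A}$-subgroup with $\mathcal{A}_V=\mathbb{Z}V$, together with a complementary $\mathcal{A}$-subgroup $W\cong C_2^4$ with $G=V\times W$. Lemma~\ref{tenspr} then yields $\mathcal{A}=\mathbb{Z}V\otimes\mathcal{A}_W=\mathbb{Z}C_2\otimes\mathcal{A}^{\prime}$ with $\mathcal{A}^{\prime}=\mathcal{A}_W$. Here $\mathcal{A}^{\prime}$ is indecomposable: a nontrivial generalized wreath decomposition of $\mathcal{A}^{\prime}$ would lift through the tensor factor $\mathbb{Z}V$ to a nontrivial decomposition of $\mathcal{A}$, contradicting the hypothesis. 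Normality of $\mathcal{A}$ is now automatic: by the identity $\aut(\mathcal{A}_V\otimes\mathcal{A}_W)=\aut(\mathcal{A}_V)\times\aut(\mathcal{A}_W)$ recorded in Section~5 we get $\aut(\mathcal{A})=\aut(\mathbb{Z}V)\times\aut(\mathcal{A}^{\prime})$; since $\aut(\mathbb{Z}V)=V_{\r}$ and $W_{\r}\trianglelefteq\aut(\mathcal{A}^{\prime})$ by the rank-$4$ case, the direct factor $G_{\r}=V_{\r}\times W_{\r}$ is normal in the product.

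The hard part is the rank-$4$ normality verification, for which there is no further structural reduction comparable to the tensor trick available in rank $5$: one must actually determine the four automorphism groups from the enumeration and check that each lies in the holomorph. A secondary technical point is reading off the tensor decomposition in rank $5$ correctly, i.e. locating in each ring a thin involution whose cyclic subgroup splits off as a direct $\mathcal{A}$-subgroup factor with a rank-$4$ complement, which is precisely the hypothesis needed to apply Lemma~\ref{tenspr}.
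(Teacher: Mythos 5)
Your proposal is correct and follows essentially the same route as the paper: for $n=4$ the normality of each of the four indecomposable $2$-$S$-rings is a finite computational check (the paper verifies $|\aut(\mathcal{A})|=|N_{\aut(\mathcal{A})}(G_{\r})|$ with COCO2P, which is equivalent to your holomorph containment), and for $n=5$ the paper likewise reads off the decomposition $\mathcal{A}=\mathcal{A}_H\otimes\mathbb{Z}L$ from the list and deduces normality from $\aut(\mathcal{A})=\aut(\mathcal{A}_H)\times\aut(\mathcal{A}_L)$. Your only departures are cosmetic: you invoke Lemma~\ref{tenspr} to justify the tensor splitting and derive indecomposability of the rank-$4$ factor by lifting a putative generalized wreath decomposition, where the paper simply records both facts as part of the direct inspection.
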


\begin{proof}
Let $n=4$. One can compute $|\aut(\mathcal{A})|$ and $|N_{\aut(\mathcal{A})}(G_{\r})|$ with using the GAP package COCO2P~\cite{GAP}. It turns out that for each of four indecomposable $2$-$S$-rings over $G$ the equality 
$$|\aut(\mathcal{A})|=|N_{\aut(\mathcal{A})}(G_{\r})|$$ 
is attained. So every indecomposable $2$-$S$-ring over $G$ is normal whenever $n=4$.

Let $n=5$. The straightforward check for each of four indecomposable $2$-$S$-rings over $G$ yields that $\mathcal{A}=\mathcal{A}_H\otimes \mathbb{Z}L$, where $H\cong C_2^4$, $L\cong C_2$, and $\mathcal{A}_H$ is indecomposable $2$-$S$-ring. Clearly, $\mathbb{Z}L$ is normal. By the above paragraph, $\mathcal{A}_H$ is normal. Since $\aut(\mathcal{A})=\aut(\mathcal{A}_H)\times \aut(\mathcal{A}_L)$, we obtain that $\mathcal{A}$ is normal. The lemma is proved.
\end{proof}

Note that if $p>2$ then Lemma~\ref{p45} does not hold. In fact, if $p>2$ then there exists an indecomposable $p$-$S$-ring over $C_p^5$ which is not normal (see~\cite[Lemma~6.4]{FK}).

\begin{lemm}\label{pnorm}
Let $n\leq 5$. Then $\mathcal{A}$ is normal whenever one of the following statements holds:

$(1)$ $\mathcal{A}$ is indecomposable;

$(2)$ $|G:\O_{\theta}(\mathcal{A})|=2$;

$(3)$ $n=4$ and $\mathcal{A}\cong (\mathbb{Z}C_2 \wr \mathbb{Z}C_2)\otimes (\mathbb{Z}C_2 \wr \mathbb{Z}C_2)$.
\end{lemm}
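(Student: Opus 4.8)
I organize the proof around the three cases, using throughout the following reformulation of normality. Since $G_{\r}$ is transitive we have $\aut(\mathcal{A})=\aut(\mathcal{A})_e\,G_{\r}$, and since $\aut_G(\mathcal{A})=N_{\aut(\mathcal{A})}(G_{\r})_e$, the $S$-ring $\mathcal{A}$ is normal if and only if $\aut(\mathcal{A})_e\leq\aut_G(\mathcal{A})$, i.e. every element of $\aut(\mathcal{A})$ fixing $e$ is a group automorphism of $G$. So in each case the task reduces to controlling the point stabilizer $\aut(\mathcal{A})_e$.

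Statement $(2)$ is the heart of the matter and I would prove it first, directly. Put $H=\O_{\theta}(\mathcal{A})$, so $[G:H]=2$ and $\{h\}\in\mathcal{S}(\mathcal{A})$ for every $h\in H$. As $G$ is abelian, $R(\{h\})$ is the graph of the (right) translation by $h$, so every $f\in\aut(\mathcal{A})$ commutes with all of these translations. For $f\in\aut(\mathcal{A})_e$ this gives $f(h)=h$ for each $h\in H$, so $f$ fixes $H$ pointwise and preserves the nontrivial coset $Ht$ (where $t\in G\setminus H$). Commuting with translations by $H$ then forces $f(ht)=h\,f(t)$ for all $h\in H$, so $f$ acts on $Ht$ as $x\mapsto h_0x$, where $h_0\in H$ is the single element determined by $f(t)=h_0t$. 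I would then verify that the resulting bijection --- the identity on $H$ and multiplication by $h_0$ on $Ht$ --- is a homomorphism; the only non-immediate case $x,y\in Ht$ uses $xy\in H$ together with $h_0^2=e$, which holds because $G$ has exponent $2$. Hence $\aut(\mathcal{A})_e\leq\aut(G)$ and $\mathcal{A}$ is normal. I would remark that this argument uses neither $n\leq 5$ nor the $2$-$S$-ring hypothesis: it proves that \emph{any} $S$-ring over an elementary abelian $2$-group whose thin radical has index~$2$ is normal.

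For statement $(3)$ I would write $\mathcal{A}\cong\mathcal{B}\otimes\mathcal{B}$ with $\mathcal{B}=\mathbb{Z}C_2\wr\mathbb{Z}C_2$ over $C_2^2$. Here $\O_{\theta}(\mathcal{B})$ is the bottom copy of $C_2$, of index~$2$, so $\mathcal{B}$ is normal by statement~$(2)$. Since $\aut(\mathcal{A})=\aut(\mathcal{B})\times\aut(\mathcal{B})$ for a tensor product and $G_{\r}=(C_2^2)_{\r}\times(C_2^2)_{\r}$ splits accordingly, the normality of each factor yields $G_{\r}\trianglelefteq\aut(\mathcal{A})$, so $\mathcal{A}$ is normal. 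What I am really recording is that a tensor product of normal $S$-rings is normal.

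Finally, statement $(1)$: for $n\in\{4,5\}$ it is exactly Lemma~\ref{p45}, so nothing new is required. For $n\leq 3$, Lemma~\ref{p3} gives that $\mathcal{A}$ is cyclotomic, say $\mathcal{A}=\cyc(K,G)$ with $K\leq\aut(G)$; since all basic sets have $2$-power size and any element of $K$ of odd prime order would have an orbit of non-$2$-power size on $G\setminus\{e\}$, the group $K$ must be a $2$-group. A nonidentity $2$-subgroup of $\GL(n,2)$ (with $n\leq 3$) fixes a nonzero vector and stabilizes a proper subspace, which one checks provides a gwr-complement; hence indecomposability forces $K=1$, i.e. $\mathcal{A}=\mathbb{Z}C_2^n$, which is normal because $\aut(\mathbb{Z}C_2^n)=(C_2^n)_{\r}$. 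I expect the main obstacle to lie not in any single computation but in making this last inspection for $n\leq 3$ clean and self-contained (the alternative being simply to invoke the \texttt{COCO2P} enumeration used throughout this section); all the genuine content sits in case~$(2)$, whose key point is that thinness of an index-$2$ subgroup rigidifies the point stabilizer enough to force honest group automorphisms.
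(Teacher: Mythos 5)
Your argument is correct in substance, but it takes a genuinely different route from the paper for the two non\-trivial statements. The paper disposes of Statements~$(2)$ and~$(3)$ by brute force: it observes that exactly $n-1$ of the enumerated $2$-$S$-rings satisfy $|G:\O_\theta(\mathcal{A})|=2$, and for each of these (and for the tensor product in Statement~$(3)$) it verifies $|\aut(\mathcal{A})|=|N_{\aut(\mathcal{A})}(G_{\r})|$ with COCO2P. Your proof of Statement~$(2)$ replaces this with a clean structural argument --- each singleton basic set $\{h\}$ forces every $f\in\aut(\mathcal{A})_e$ to commute with translation by $h$, hence to fix $H=\O_\theta(\mathcal{A})$ pointwise and act on the other coset by multiplication by a fixed $h_0$ with $h_0^2=e$ --- and it is correct; as you note, it needs neither $n\leq 5$ nor the $p$-$S$-ring hypothesis, so it is strictly more general than what the paper establishes. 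Your reduction of Statement~$(3)$ to Statement~$(2)$ via ``a tensor product of normal $S$-rings is normal'' is likewise correct and again avoids the computation. For Statement~$(1)$ with $n\in\{4,5\}$ you and the paper both simply invoke Lemma~\ref{p45}.

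The one place your write-up is not closed is Statement~$(1)$ for $n\leq 3$, where the paper just cites \cite[Lemma~5.2]{KR2} to get $\mathcal{A}=\mathbb{Z}G$. Two steps of your sketch need tightening. First, ``an element $\sigma\in K$ of odd prime order has an orbit of non-$2$-power size'' does not directly contradict $\mathcal{A}=\cyc(K,G)$ being a $2$-$S$-ring, since an orbit of $\langle\sigma\rangle$ need not be a basic set of $\mathcal{A}$; the fix is to pass to $\cyc(\langle\sigma\rangle,G)\geq\mathcal{A}$ and apply Lemma~\ref{pextension}. Second, the claim that a nontrivial $2$-subgroup of $\GL(n,2)$, $n\leq 3$, forces a proper $S$-wreath decomposition (i.e.\ that some nontrivial $L\leq\O_\theta(\mathcal{A})$ lies in $\rad(X)$ for all basic sets $X$ outside a proper $\mathcal{A}$-subgroup) is asserted rather than proved, and this is exactly the content of the cited lemma; as you yourself acknowledge, the honest options are to carry out that inspection or to fall back on the citation, which is what the paper does.
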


\begin{proof}
If $n\leq 3$ and $\mathcal{A}$ is indecomposable then $\mathcal{A}=\mathbb{Z}G$ by~\cite[Lemma~5.2]{KR2}. Clearly, in this case $\mathcal{A}$ is normal. If $n\in\{4,5\}$ and $\mathcal{A}$ is indecomposable then $\mathcal{A}$ is normal by Lemma~\ref{p45}. There are exactly~$n-1$ $2$-$S$-rings over $G$ for which Statement~2 of the lemma holds. For every $\mathcal{A}$ isomorphic to one of these $2$-$S$-rings and for $\mathcal{A}\cong (\mathbb{Z}C_2 \wr \mathbb{Z}C_2)\otimes (\mathbb{Z}C_2 \wr \mathbb{Z}C_2)$ one can compute $|\aut(\mathcal{A})|$ and $|N_{\aut(\mathcal{A})}(G_{\r})|$ with using the GAP package COCO2P~\cite{GAP}. It turns out that in each case the equality $|\aut(\mathcal{A})|=|N_{\aut(\mathcal{A})}(G_{\r})|$ holds and hence $\mathcal{A}$ is normal. The lemma is proved. 
\end{proof}

\begin{lemm}\label{p4cyclotom}
Let $n=4$. Then $\mathcal{A}$ is cyclotomic.
\end{lemm}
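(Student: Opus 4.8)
The plan is to split the argument according to whether $\mathcal{A}$ is decomposable, treating the indecomposable case through normality and the decomposable case by showing directly that the generalized wreath product preserves cyclotomy. Throughout I use that every $2$-$S$-ring over $G$ is schurian (Lemma~\ref{schurian}), so that $\mathcal{A}=V(\aut(\mathcal{A}),G)$ and $\mathcal{S}(\mathcal{A})=\orb(\aut(\mathcal{A})_e,G)$.

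First I would dispose of the indecomposable case. By Lemma~\ref{p45} every indecomposable $2$-$S$-ring over $G$ is normal, i.e. $G_{\r}\trianglelefteq\aut(\mathcal{A})$. Since $G$ is abelian, normality forces $\aut(\mathcal{A})\leq N_{\sym(G)}(G_{\r})=G_{\r}\aut(G)$, whence the point stabilizer satisfies $\aut(\mathcal{A})_e=\aut_G(\mathcal{A})\leq\aut(G)$. Setting $K=\aut_G(\mathcal{A})$ we get $K\leq\aut(G)$ and $\mathcal{S}(\mathcal{A})=\orb(\aut(\mathcal{A})_e,G)=\orb(K,G)$, so $\mathcal{A}=\cyc(K,G)$ is cyclotomic; this handles all four indecomposable $2$-$S$-rings uniformly.

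It remains to treat a decomposable $\mathcal{A}$, i.e. a nontrivial $S$-wreath product $\mathcal{A}=\mathcal{A}_U\wr_S\mathcal{A}_{G/L}$ with $S=U/L$, $\{e\}\neq L$ and $U\neq G$. Then $\dim U\leq 3$ and $\dim (G/L)\leq 3$, so $\mathcal{A}_U$ and $\mathcal{A}_{G/L}$ are cyclotomic by Lemma~\ref{p3}; moreover $\dim S=\dim U-\dim L\leq 2$, so $\mathcal{A}_S$ is not the exceptional $\mathbb{Z}C_2\wr\mathbb{Z}C_2\wr\mathbb{Z}C_2$ (which has rank exactly $3$) and is therefore Cayley minimal by Lemma~\ref{p3}. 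Write $K_1=\aut_U(\mathcal{A}_U)$ and $K_2=\aut_{G/L}(\mathcal{A}_{G/L})$, so $\mathcal{A}_U=\cyc(K_1,U)$ and $\mathcal{A}_{G/L}=\cyc(K_2,G/L)$. The crucial observation is that $\orb(K_1^S,S)=\orb(K_2^S,S)=\mathcal{S}(\mathcal{A}_S)$, since by the section rule for cyclotomic $S$-rings one has $(\mathcal{A}_U)_S=\cyc(K_1^S,S)$ and $(\mathcal{A}_{G/L})_S=\cyc(K_2^S,S)$, both equal to $\mathcal{A}_S$; Cayley minimality of $\mathcal{A}_S$ then forces $K_1^S=K_2^S=\aut_S(\mathcal{A}_S)$. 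Fixing a decomposition $G=L\oplus C\oplus M$ with $U=L\oplus C$ and $S\cong C$, I would glue $K_1$ and $K_2$ along this common quotient: let $K\leq\aut(G)$ consist of all automorphisms $\varphi$ upper-triangular with respect to the flag $L\leq U\leq G$ with $\varphi|_U\in K_1$ and $\varphi^{G/L}\in K_2$, together with all shears $M\to L$. The equality $K_1^S=K_2^S$ guarantees that every element of $K_1$ extends to such a $\varphi$ and vice versa, so $\{\varphi|_U:\varphi\in K\}=K_1$ and $\{\varphi^{G/L}:\varphi\in K\}=K_2$. Computing orbits, $K$ reproduces $\orb(K_1,U)=\mathcal{S}(\mathcal{A}_U)$ inside $U$, while outside $U$ the free shears fuse each full $L$-coset and $K_2$ acts as on $G/L$, yielding exactly the preimages of the basic sets of $\mathcal{A}_{G/L}$ lying outside $S$, i.e. precisely the basic sets of $\mathcal{A}$ outside $U$. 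Hence $\mathcal{S}(\mathcal{A})=\orb(K,G)$ and $\mathcal{A}=\cyc(K,G)$.

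The main obstacle is this gluing step: imposing $\varphi|_U\in K_1$ and $\varphi^{G/L}\in K_2$ simultaneously must not shrink either restricted action, and this is exactly what the identity $K_1^S=K_2^S$, secured by the Cayley minimality of $\mathcal{A}_S$, provides. Verifying that the $K$-orbits outside $U$ are full unions of $L$-cosets — so that no basic set of $\mathcal{A}$ is split — is the routine part, resting on the defining property $L\leq\rad(X)$ of the basic sets $X$ outside $U$.
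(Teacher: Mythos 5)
Your proof is correct, and its overall architecture (split by decomposability; handle the indecomposable case via normality from Lemma~\ref{p45} plus schurity from Lemma~\ref{schurian}) coincides with the paper's. The difference is in the decomposable case: the paper disposes of it with a one-line citation of~\cite[Lemma~5.6]{KR2}, whereas you reprove the needed fact from scratch by an explicit gluing of $K_1=\aut_U(\mathcal{A}_U)$ and $K_2=\aut_{G/L}(\mathcal{A}_{G/L})$ along the common section $S$. What you are really proving is (a special case of) \cite[Lemma~4.3]{KR2} -- the statement that a nontrivial $S$-wreath product of cyclotomic $S$-rings with Cayley minimal $\mathcal{A}_S$ is cyclotomic -- which the paper itself invokes later in the proof of Lemma~\ref{p5cyclotom}. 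Your key reductions are sound: since the wreath product is nontrivial over $C_2^4$, both $U$ and $G/L$ have rank at most~$3$ and $S$ has rank at most~$2$, so Lemma~\ref{p3} gives cyclotomicity of the factors and Cayley minimality of $\mathcal{A}_S$ (the exceptional $S$-ring $\mathbb{Z}C_2\wr\mathbb{Z}C_2\wr\mathbb{Z}C_2$ lives over a group of rank~$3$, so it cannot occur -- though note its rank \emph{as an $S$-ring} is $4$, not $3$; what excludes it is the rank of the underlying group). Cayley minimality then forces $K_1^S=K_2^S=\aut_S(\mathcal{A}_S)$, which is exactly the compatibility needed so that every $\psi_1\in K_1$ admits a companion $\psi_2\in K_2$ and an upper-triangular extension $\varphi\in\aut(G)$, and the shears $M\to L$ (which lie in your group $K$ since they restrict trivially to $U$ and to $G/L$) sweep out full $L$-cosets outside $U$. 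The resulting orbit computation does recover $\mathcal{S}(\mathcal{A}_U)$ inside $U$ and the preimages $\pi^{-1}(Y)$, $Y\in\mathcal{S}(\mathcal{A}_{G/L})$ outside $S$, which are precisely the basic sets of the $S$-wreath product. Your version buys a self-contained argument that does not lean on the unpublished preprint~\cite{KR2}; the paper's version buys brevity and uniformity with the rest of its citations.
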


\begin{proof}
If $\mathcal{A}$ is decomposable then $\mathcal{A}$ is cyclotomic by~\cite[Lemma~5.6]{KR2}. If $\mathcal{A}$ is indecomposable then $\mathcal{A}$ is normal by Lemma~\ref{p45}. This implies that 
$$\aut(\mathcal{A})_e=N_{\aut(\mathcal{A})}(G_{\r})_e\leq \aut(G).$$ 
The $S$-ring $\mathcal{A}$ is schurian by Lemma~\ref{schurian}. So from Eq.~(2) it follows that $\mathcal{A}=V(\aut(\mathcal{A}),G)$ and hence $\mathcal{A}=\cyc(\aut(\mathcal{A})_e,G)$. The lemma is proved. 
\end{proof}

\begin{lemm}\label{p5cyclotom}
Let $n=5$. Suppose that $\mathcal{A}$ is decomposable and $|\O_{\theta}(\mathcal{A})|=8$. Then $\mathcal{A}$ is cyclotomic. 
\end{lemm}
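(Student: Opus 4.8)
The plan is to realize $\mathcal{A}$ as a cyclotomic $S$-ring by exhibiting a group $K\leq\aut(G)$ whose orbits are exactly the basic sets of $\mathcal{A}$; equivalently, since $\aut_G(\mathcal{A})$ fixes $e$ and permutes each basic set onto itself, it suffices to show that $\aut_G(\mathcal{A})$ is transitive on every basic set, for then $\mathcal{A}=\cyc(\aut_G(\mathcal{A}),G)$. As $\mathcal{A}$ is decomposable, I would first fix a nontrivial decomposition $\mathcal{A}=\mathcal{A}_U\wr_S\mathcal{A}_{G/L}$ with $S=U/L$, $\{e\}<L\leq U<G$, and $L\leq\rad(X)$ for every basic set $X$ outside $U$. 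Every such $X$ then has size divisible by $|L|\geq 2$, so no singleton lies outside $U$; hence $\O_\theta(\mathcal{A})\leq U$ and $|U|\geq 8$, forcing $|U|\in\{8,16\}$. Since $L\neq\{e\}$ we also have $|G/L|=32/|L|\leq 16$. Thus both $\mathcal{A}_U$ and $\mathcal{A}_{G/L}$ are $2$-$S$-rings over groups of rank at most $4$ (Lemma~\ref{psection}), whence both are cyclotomic by Lemmas~\ref{p3} and~\ref{p4cyclotom}; consequently the common section $\mathcal{A}_S$ is cyclotomic as well.

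The second step is to prove that $\mathcal{A}_S$ is Cayley minimal. As $S=U/L$ has order at most $8$, Lemma~\ref{p3} yields this unless $\mathcal{A}_S\cong\mathbb{Z}C_2\wr\mathbb{Z}C_2\wr\mathbb{Z}C_2$, the single exceptional $S$-ring of rank $3$, and I would rule this case out with the thin radical. Any $\mathcal{A}$-subgroup of order $2$ has its nonidentity element thin, so $L\leq\O_\theta(\mathcal{A})$; moreover, for $x\in\O_\theta(\mathcal{A})\leq U$ the coset $xL$ is a singleton basic set of $\mathcal{A}_S$, so $\pi(\O_\theta(\mathcal{A}))\subseteq\O_\theta(\mathcal{A}_S)$ for the projection $\pi\colon U\to S$. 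The exceptional $S$-ring requires $|S|=8$, i.e.\ $|U|=16$ and $|L|=2$, and then $|\O_\theta(\mathcal{A}_S)|\geq|\O_\theta(\mathcal{A})|/|L|=8/2=4$, contradicting the fact that $\mathbb{Z}C_2\wr\mathbb{Z}C_2\wr\mathbb{Z}C_2$ has thin radical of order $2$. Hence the exception cannot arise and $\mathcal{A}_S$ is Cayley minimal.

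For the final step I would use Cayley minimality to glue the two cyclotomic pieces. Because $\mathcal{A}_U$ and $\mathcal{A}_{G/L}$ are cyclotomic, both $\aut_U(\mathcal{A}_U)^S$ and $\aut_{G/L}(\mathcal{A}_{G/L})^S$ are transitive on every basic set of $\mathcal{A}_S$, hence Cayley equivalent to $\aut_S(\mathcal{A}_S)$; Cayley minimality then forces $\aut_U(\mathcal{A}_U)^S=\aut_S(\mathcal{A}_S)=\aut_{G/L}(\mathcal{A}_{G/L})^S$. I would then assemble $\aut_G(\mathcal{A})$ from three families of elements of $\aut(G)$: lifts of generators of $\aut_U(\mathcal{A}_U)$ extended by the identity off $U$, lifts of generators of $\aut_{G/L}(\mathcal{A}_{G/L})$, and the transvections $v\mapsto v+\lambda(v)\ell$ with $\ell\in L$ and $\lambda\colon G\to\mathbb{F}_2$ linear and vanishing on $U$. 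These transvections fix $U$ pointwise and, since $L\leq\rad(X)$ for $X$ outside $U$, preserve every basic set; together with the lifts of $\aut_{G/L}(\mathcal{A}_{G/L})$ they act transitively on each basic set outside $U$, while the lifts of $\aut_U(\mathcal{A}_U)$ act transitively on each basic set inside $U$. The matching of the induced actions on $S$ recorded in the displayed equalities is exactly what lets these lifts be chosen so as to preserve $\mathcal{A}$ simultaneously, making $\aut_G(\mathcal{A})$ transitive on every basic set.

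I expect this last step to be the main obstacle: checking that the chosen generators genuinely lie in $\aut(G)\cap\aut(\mathcal{A})$ and that no basic set gets split requires a careful analysis of the fibres of $\pi$ and of the compatibility between $\aut_U(\mathcal{A}_U)^S$ and $\aut_{G/L}(\mathcal{A}_{G/L})^S$. This is precisely the cyclotomic counterpart of the recombination underlying the $\CI$-criterion of Lemma~\ref{cigwr} (and of Lemma~\ref{cicayleymin}); in a fully written proof it could alternatively be invoked from the corresponding cyclotomicity statement for a generalized wreath product of cyclotomic $S$-rings with a Cayley minimal section.
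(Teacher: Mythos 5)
Your argument is correct and follows essentially the same route as the paper: reduce to a nontrivial $S=U/L$-wreath decomposition, get cyclotomicity of $\mathcal{A}_U$ and $\mathcal{A}_{G/L}$ from Lemmas~\ref{p3} and~\ref{p4cyclotom}, use $|\O_\theta(\mathcal{A})|=8$ to exclude the exceptional ring $\mathbb{Z}C_2\wr\mathbb{Z}C_2\wr\mathbb{Z}C_2$ and conclude that $\mathcal{A}_S$ is Cayley minimal, and then glue. The ``recombination'' step you sketch at the end is exactly what the paper invokes as a black box, namely \cite[Lemma~4.3]{KR2} applied after deducing $\aut_U(\mathcal{A}_U)^S=\aut_{G/L}(\mathcal{A}_{G/L})^S=\aut_S(\mathcal{A}_S)$, so citing that lemma (as you note one could) completes your proof.
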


\begin{proof}
Let $\mathcal{A}$ be the nontrivial $S$-wreath product for some $\mathcal{A}$-section $S=U/L$. Note that $|U|\leq 16$, $|G/L|\leq 16$, and $|S|\leq 8$. The $S$-rings $\mathcal{A}_{U}$, $\mathcal{A}_{G/L}$, and $\mathcal{A}_{S}$ are $2$-$S$-rings by Lemma~\ref{psection}. So each of these $S$-rings is cyclotomic by Lemma~\ref{p3} whenever the order of the corresponding group is at most~$8$ and by Lemma~\ref{p4cyclotom} otherwise. Since $|\O_\theta(\mathcal{A})|=8$, we conclude that $|S|\leq 4$ or $|S|=8$ and $|\O_\theta(\mathcal{A}_{S})|\geq 4$. In both cases $\mathcal{A}_{S}$ is Cayley minimal by Lemma~\ref{p3}. This implies that 
$$\aut_{U}(\mathcal{A}_{U})^{S}=\aut_{G/L}(\mathcal{A}_{G/L})^{S}=\aut_{S}(\mathcal{A}_{S}).$$
Now from~\cite[Lemma~4.3]{KR2} it follows that $\mathcal{A}$ is cyclotomic. The lemma is proved.
\end{proof}

\begin{lemm}\label{p5decomp1}
Let $n=5$. Suppose that $\mathcal{A}$ is decomposable and $|\O_\theta(\mathcal{A})|=4$. Then one of the following statements holds:

$(1)$ there exists an $\mathcal{A}$-subgroup $L\leq \O_\theta(\mathcal{A})$ of order~$2$ such that $\mathcal{A}=\mathbb{Z}\O_\theta(\mathcal{A})\wr_{S} \mathcal{A}_{G/L}$, where $S=\O_\theta(\mathcal{A})/L$;

$(2)$ $|\aut_G(\mathcal{A})|\geq |\aut_U(\mathcal{A}_U)|$ for every $\mathcal{A}$-subgroup $U$ with $|U|=16$ and $U\geq \O_\theta(\mathcal{A})$;

$(3)$ $\mathcal{A}$ is normal;

$(4)$ there exist an $\mathcal{A}$-subgroup $L\leq \O_\theta(\mathcal{A})$ and $X\in \mathcal{S}(\mathcal{A})$ such that $|L|=|X|=2$, $L\neq \rad(X)$, and $\mathcal{A}_{G/L}$ is normal.
\end{lemm}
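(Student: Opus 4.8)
The plan is to prove the lemma by a finite case analysis over the decomposable $2$-$S$-rings $\mathcal{A}$ over $G=C_2^5$ with $|\O_\theta(\mathcal{A})|=4$, leaning on the COCO2P enumeration already invoked for Lemmas~\ref{p45} and~\ref{pnorm}. Before any computation I would record two structural facts that cut down the bookkeeping. First, since $\mathcal{A}$ is a $2$-$S$-ring, every $\mathcal{A}$-subgroup of order $2$ is generated by a thin point and therefore lies inside $W:=\O_\theta(\mathcal{A})$; as $W\cong C_2^2$ there are exactly three such subgroups $L_1,L_2,L_3$, and these are the only candidates for the group $L$ in statements~(1) and~(4). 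Second, $\mathcal{A}_{G/W}$ is a $2$-$S$-ring over $C_2^3$ by Lemma~\ref{psection}, so Lemma~\ref{psring}(2) yields a chain of $\mathcal{A}_{G/W}$-subgroups with steps of order $2$; pulling it back through $G\to G/W$ produces $\mathcal{A}$-subgroups $W<U<G$ with $|U|=16$. Hence an order-$16$ $\mathcal{A}$-subgroup containing $W$ always exists and the quantifier in statement~(2) is never vacuous.

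I would then test the four handles in the order (1), (3), (2), (4). For~(1): check whether some $L_i$ satisfies $L_i\leq\rad(X)$ for every basic set $X$ outside $W$. Because $\mathcal{A}_W=\mathbb{Z}W$ holds automatically (all four points of $W$ are thin), such an $L_i$ yields precisely the nontrivial decomposition $\mathcal{A}=\mathbb{Z}W\wr_S\mathcal{A}_{G/L_i}$ with $S=W/L_i$, which is statement~(1). If no $L_i$ works I pass to~(3) and decide normality exactly as in Lemmas~\ref{p45} and~\ref{pnorm}, by comparing $|\aut(\mathcal{A})|$ with $|N_{\aut(\mathcal{A})}(G_{\r})|$ in COCO2P. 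For the still-surviving $S$-rings I compute $|\aut_G(\mathcal{A})|$ and, for each order-$16$ $\mathcal{A}$-subgroup $U\geq W$, the order $|\aut_U(\mathcal{A}_U)|$; if every such $U$ satisfies $|\aut_G(\mathcal{A})|\geq|\aut_U(\mathcal{A}_U)|$ we are in statement~(2).

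The residual cases are those where (1)--(3) all fail: $W$ has no order-$2$ gwr-complement, $\mathcal{A}$ is non-normal, and some order-$16$ $\mathcal{A}$-subgroup $U\geq W$ has $|\aut_U(\mathcal{A}_U)|>|\aut_G(\mathcal{A})|$. For these I must exhibit the data of statement~(4): an order-$2$ subgroup $L=L_i\leq W$ and a size-$2$ basic set $X$ with $\rad(X)\neq L$ and with $\mathcal{A}_{G/L}$ normal. Since $\mathcal{A}_{G/L_i}$ is a $2$-$S$-ring over $C_2^4$, its normality is decided by Lemma~\ref{pnorm} (or directly in COCO2P), so the real task is to choose an index $i$ for which a suitable size-$2$ basic set $X$ exists and, simultaneously, $\mathcal{A}_{G/L_i}$ is normal.

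The main obstacle is exhaustiveness. There is no uniform structural reason forcing a residual $\mathcal{A}$ to admit the configuration of statement~(4): a priori such an $\mathcal{A}$ might have no size-$2$ basic set, or every choice of $L_i$ with a size-$2$ basic set outside its radical might render $\mathcal{A}_{G/L_i}$ non-normal. Excluding these pathologies, together with the automorphism-order comparisons underlying statements~(2) and~(3), is precisely what the explicit COCO2P enumeration of the decomposable $2$-$S$-rings over $C_2^5$ with $|\O_\theta(\mathcal{A})|=4$ provides: one checks the four-fold disjunction separately on each of the finitely many isomorphism types rather than through a single closed-form argument.
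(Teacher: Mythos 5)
Your proposal is correct and follows essentially the same route as the paper: both reduce the lemma to a finite verification over the COCO2P enumeration of the forty-five decomposable $2$-$S$-rings over $C_2^5$ with $|\O_\theta(\mathcal{A})|=4$, checking the four alternatives case by case (the paper finds twenty-six satisfying~(1), ten satisfying~(4), and disposes of the remaining nine via~(2) or~(3)). The only substantive difference is that for statement~(2) the paper does not compute $|\aut_U(\mathcal{A}_U)|$ directly as you propose, but bounds it by $|X_1||X_2|\leq 32$, where $X_1$, $X_2$ are smallest basic sets in $U_1\setminus \O_\theta(\mathcal{A})$ and $U\setminus U_1$ for an intermediate $\mathcal{A}$-subgroup $U_1$ of order~$8$, using that $\O_\theta(\mathcal{A})$ together with one point of each generates $U$ --- a refinement of the computation rather than a different method.
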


\begin{proof}
There are exactly forty five decomposable $2$-$S$-rings over $G$ whose thin radical has order~$4$. This can be checked by inspecting all $2$-$S$-rings over $G$ one after the other. Let $\mathcal{A}$ be one of them and $R=\O_\theta(\mathcal{A})$. The straightforward check of basic sets of each of the above forty five $2$-$S$-rings shows that  Statement~1 of the lemma holds for twenty six of them. The analysis of basic sets of the remaining nineteen $2$-$S$-rings implies that  ten of them have an $\mathcal{A}$-subgroup $L\leq R$ and $X\in \mathcal{S}(\mathcal{A})$ satisfying the following: (1) $|L|=|X|=2$; (2) $L\neq \rad(X)$; (3) one of the conditions from Lemma~\ref{pnorm} holds for $\mathcal{A}_{G/L}$. We conclude that $\mathcal{A}_{G/L}$ is normal and hence Statement~4 of the lemma holds for these ten $2$-$S$-rings.

It remains to consider nine $2$-$S$-rings for which neither Statement~1 nor Statement~4 of the lemma  holds. Let $U$ be an $\mathcal{A}$-subgroup with $|U|=16$ and $U\geq R$. Statement~2 of Lemma~\ref{psring} yields that there exists an $\mathcal{A}_U$-subgroup $U_1$ such that
$$|U_1|=8~\text{and}~R<U_1<U.$$

Let $X_1$ be a basic set of $\mathcal{A}$ inside $U_1\setminus R$ of the least possible size and $X_2$ a basic set of $\mathcal{A}$ inside $U\setminus U_1$ of the least possible size. Clearly, $|X_1|\leq 4$ and $|X_2|\leq 8$. Choose $x_1\in X_1$ and $x_2\in X_2$. From the choice of $x_1$ and $x_2$ it follows that $\langle R, x_1, x_2\rangle=U$. So if $\varphi\in \aut_U(\mathcal{A}_U)$, $x_1^{\varphi}=x_1$, and $x_2^{\varphi}=x_2$ then $\varphi$ is trivial. This implies that
$$|\aut_U(\mathcal{A}_U)|\leq |X_1||X_2|\leq 32.~\eqno(5)$$
One can compute $|\aut_G(\mathcal{A})|=|N_{\aut(\mathcal{A})}(G_{\r})_e|$ with using the GAP package COCO2P~\cite{GAP}. The inequality $|\aut_G(\mathcal{A})|\geq 32$ holds for four of the remaining $2$-$S$-rings. Due to Eq.~(5), Statement~2 of the lemma holds for them. For three of the remaining $2$-$S$-rings, we have $|\aut_G(\mathcal{A})|=16$. However, in this situation there are no basic sets of size~$4$ and hence $|X_1|=2$ or there are no basic sets of size $8$ and hence $|X_2|\leq 4$. In both case Eq.~(5) implies that $|\aut_U(\mathcal{A}_U)|\leq 16$ and Statement~2 of the lemma holds. 

Now it remains to consider two $2$-$S$-rings for which $|\aut_G(\mathcal{A})|=8$. One of these $2$-$S$-rings does not have a basic set of size~$8$ and every of its $\mathcal{A}$-subgroups of order~$16$ contains a basic set of size~$2$. So $|X_1||X_2|\leq 8$. In view of Eq.~(5), Statement~2 of the lemma holds for this $2$-$S$-ring. For the other of these $2$-$S$-rings computer calculations made with the help the GAP package COCO2P~\cite{GAP} imply that $|\aut(\mathcal{A})_e|=|\aut_G(\mathcal{A})|=8$ and hence it is normal, i.e. Statement~3 of the lemma holds for it. The lemma is proved.
\end{proof}

\begin{lemm}\label{p5decomp2}
Let $n=5$. Suppose that $\mathcal{A}$ is decomposable, $|\O_\theta(\mathcal{A})|=2$, and there exists $X\in \mathcal{S}(\mathcal{A})$ with $|X|>1$ and $|\rad(X)|=1$. Then $|X|=4$ and one of the following statements holds:

$(1)$ $\mathcal{A}\cong \mathcal{B} \wr \mathbb{Z}C_2$, where $\mathcal{B}$ is a $2$-$S$-ring over $C_2^4$;

$(2)$ $|\aut_G(\mathcal{A})|\geq |\aut_U(\mathcal{A}_U)|$ for every $\mathcal{A}$-subgroup $U$ with $|U|=16$;

$(3)$ there exists an $\mathcal{A}$-subgroup $L$ such that $|L|\in\{2,4\}$ and $\mathcal{A}_{G/L}$ is normal.
\end{lemm}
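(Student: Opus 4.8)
The plan is to mimic the computer-assisted argument of Lemma~\ref{p5decomp1}: reduce to the finite list of $2$-$S$-rings furnished by the enumeration on~\cite{Reich}, and then distribute the relevant ones among the three alternatives.

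First I would pin down $|X|=4$ by a structural argument. Since $\mathcal{A}$ is decomposable, fix a nontrivial decomposition $\mathcal{A}=\mathcal{A}_U\wr_{S}\mathcal{A}_{G/L}$ with $\{e\}\neq L\leq U\neq G$. By definition of the $S$-wreath product, $L\leq\rad(Y)$ for every basic set $Y$ outside $U$; as $L\neq\{e\}$ while $\rad(X)=\{e\}$, the set $X$ cannot lie outside $U$, so $X\subseteq U$. Because $U$ is a proper $\mathcal{A}$-subgroup of $G=C_2^5$ we have $|U|\leq 16$, whence $|X|\leq 8$. The value $|X|=2$ is impossible: if $X=\{a,b\}$ then, $G$ being elementary abelian, the nonidentity element $ab$ fixes $X$ setwise, so $\rad(X)\neq\{e\}$. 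Finally $|X|=8$ would force $|U|=16$ and $\langle X\rangle=U\cong C_2^4$, so that $\mathcal{A}_U$ (a $2$-$S$-ring by Lemma~\ref{psection}) would carry a basic set of size~$8$ with trivial radical; inspecting the nineteen $2$-$S$-rings over $C_2^4$ shows that no such basic set exists. Hence $|X|=4$.

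Next I would read off from~\cite{Reich} the decomposable $2$-$S$-rings over $G$ with $|\O_\theta(\mathcal{A})|=2$ that carry a basic set of size~$4$ with trivial radical, and inspect their basic sets one by one. Those that are ordinary wreath products with top $\mathbb{Z}C_2$---equivalently, that admit a decomposition with $U=L$ of order $16$, so that $G/L\cong C_2$ and $\mathcal{A}_{G/L}=\mathbb{Z}C_2$---fall under alternative~(1), with $\mathcal{B}=\mathcal{A}_U$ the induced $2$-$S$-ring on the $C_2^4$ base (inside which $X$ lives). For the remaining $S$-rings I would argue exactly as in the proof of Lemma~\ref{p5decomp1}: Lemma~\ref{psring}(2) supplies a chain $\O_\theta(\mathcal{A})<U_1<U$ of $\mathcal{A}$-subgroups with $|U_1|=8$ and $|U|=16$; taking a smallest basic set $X_1\subseteq U_1\setminus\O_\theta(\mathcal{A})$ and a smallest basic set $X_2\subseteq U\setminus U_1$, and representatives $x_1\in X_1$, $x_2\in X_2$, one gets $\langle\O_\theta(\mathcal{A}),x_1,x_2\rangle=U$ and hence a bound $|\aut_U(\mathcal{A}_U)|\leq |X_1||X_2|$ in the spirit of Eq.~(5) of that proof. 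Comparing this bound with the value $|\aut_G(\mathcal{A})|=|N_{\aut(\mathcal{A})}(G_\r)_e|$ computed in COCO2P~\cite{GAP} yields alternative~(2) whenever the computed order is large enough; for the $S$-rings where it is not, I would exhibit an $\mathcal{A}$-subgroup $L$ with $|L|\in\{2,4\}$ for which $\mathcal{A}_{G/L}$ satisfies one of the hypotheses of Lemma~\ref{pnorm} (indecomposability, or thin radical of index~$2$), so that $\mathcal{A}_{G/L}$ is normal and alternative~(3) holds.

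The hard part is not conceptual but the bookkeeping of the case analysis. The delicate cases are the borderline ones, where $|\aut_G(\mathcal{A})|$ is small: there the crude bound $|\aut_U(\mathcal{A}_U)|\leq|X_1||X_2|$ must be sharpened by choosing the generators so that some $|X_i|=2$ (using the absence of basic sets of a given size in the particular $\mathcal{A}$-subgroups of order~$16$), exactly as in the last paragraphs of the proof of Lemma~\ref{p5decomp1}; a small residue of $S$-rings will instead be certified normal by a direct COCO2P computation of $|\aut(\mathcal{A})_e|$, landing them in alternative~(3).
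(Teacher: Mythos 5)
Your overall strategy is the same as the paper's: reduce to the finite list from~\cite{Reich}, observe that among the twenty nine decomposable $2$-$S$-rings with thin radical of order~$2$ exactly ten carry a basic set with trivial radical (all of size~$4$), and then distribute these ten as $2+6+2$ among alternatives (1), (3) and (2) respectively, using Lemma~\ref{pnorm} for the six and COCO2P data for the last two. Your structural derivation of $|X|=4$ (forcing $X\subseteq U$ via the radical condition of the wreath decomposition, killing $|X|=2$ by the observation that $ab\in\rad(\{a,b\})$ in an elementary abelian $2$-group, and killing $|X|=8$ by inspection of the nineteen $2$-$S$-rings over $C_2^4$) is a reasonable variant of the paper's pure enumeration, though it still rests on an unverified computational fact about $C_2^4$.

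There is, however, a concrete error in your treatment of the two residual $S$-rings. You transplant the bound $|\aut_U(\mathcal{A}_U)|\leq|X_1||X_2|$ from the proof of Lemma~\ref{p5decomp1}, claiming $\langle\O_\theta(\mathcal{A}),x_1,x_2\rangle=U$. That generation statement is false here: in Lemma~\ref{p5decomp1} the thin radical has order~$4$, so two further generators reach order~$16$, but in the present lemma $|\O_\theta(\mathcal{A})|=2$ and $\langle\O_\theta(\mathcal{A}),x_1,x_2\rangle$ has order at most~$8$, so it can never equal $U$. To repair this you need the full chain $\O_\theta(\mathcal{A})<V_1<V_2<U$ of orders $2<4<8<16$ from Lemma~\ref{psring}(2) and three basic sets, giving only $|\aut_U(\mathcal{A}_U)|\leq 2\cdot4\cdot8=64$. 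This suffices for the residual $S$-ring with $|\aut_G(\mathcal{A})|=64$ but not for the one with $|\aut_G(\mathcal{A})|=32$ (where the paper's computation shows $|\aut_{U_i}(\mathcal{A}_{U_i})|\in\{4,32\}$, so the crude bound of $64$ is genuinely not tight enough). The paper avoids the bound entirely at this stage and simply computes $|\aut_{U_i}(\mathcal{A}_{U_i})|$ for the three subgroups of order~$16$ of each of the two remaining $S$-rings; your closing remark about ``sharpening the bound'' gestures at this but does not actually supply the needed argument, so as written the verification of alternative~(2) for the harder of the two cases is incomplete.
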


\begin{proof}
There are exactly twenty nine decomposable $2$-$S$-rings over $G$ whose thin radical has order~$2$ (in fact, every $2$-$S$-ring with the thin radical of order~$2$ is decomposable). This can be verified by inspecting all $2$-$S$-rings over $G$ one after the other. Only ten of these twenty nine $2$-$S$-rings have a basic set with the trivial radical and each of such basic sets with the trivial radical has size~$4$. Let $\mathcal{A}$ be one of the ten above $2$-$S$-rings. From the direct check it follows that Statement~1 of the lemma holds for two of these ten $2$-$S$-rings. The analysis of basic sets of the remaining eight $2$-$S$-rings yields that six of them have an $\mathcal{A}$-subgroup $L$  satisfying the following: (1) $|L|\in\{2,4\}$; (2) one of the conditions from Lemma~\ref{pnorm} holds for $\mathcal{A}_{G/L}$. We conclude $\mathcal{A}_{G/L}$ is normal and hence Statement~3 of the lemma holds for these six $2$-$S$-rings.

It remains to consider two $2$-$S$-rings for which neither Statement~1 nor Statement~3 of the lemma  holds. Each of these two $2$-$S$-rings has exactly three distinct $\mathcal{A}$-subgroups of order~$16$, say $U_1$, $U_2$, and $U_3$. Computations made by using the GAP package COCO2P yield that for one of them the following holds:
$$|\aut_G(\mathcal{A})|=64,~|\aut_{U_i}(\mathcal{A}_{U_i})|\in \{8,64\}~\text{for}~i\in\{1,2,3\};$$
and for the other of them the following holds:
$$|\aut_G(\mathcal{A})|=32,~|\aut_{U_i}(\mathcal{A}_{U_i})|\in \{4,32\}~\text{for}~i\in\{1,2,3\}.$$
In both cases Statement~2 of the lemma holds. The lemma is proved.
\end{proof}

\begin{lemm}\label{2cigwr}
Let $D\in \mathcal{E}$ such that every $S$-ring over a proper section of $D$ is $\CI$, $\mathcal{D}$ an $S$-ring over $D$, and $S=U/L$ a $\mathcal{D}$-section. Suppose that $\mathcal{D}$ is the nontrivial $S$-wreath product. Then $\mathcal{D}$ is a $\CI$-$S$-ring whenever $D/L\cong C_2^k$ for some $k\leq 4$ and $\mathcal{D}_{D/L}$ is a $2$-$S$-ring.
\end{lemm}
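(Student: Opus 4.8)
The plan is to verify the standing hypotheses of Lemma~\ref{cigwr} and then close the argument with its refinement Lemma~\ref{cicayleymin}, treating one degenerate section separately. Since $\mathcal{D}$ is the \emph{nontrivial} $S$-wreath product, we have $L\neq\{e\}$ and $U\neq D$; hence both $U$ (regarded as the section $U/\{e\}$) and $D/L$ are proper sections of $D$. By the hypothesis on $D$ the $S$-rings $\mathcal{D}_U$ and $\mathcal{D}_{D/L}$ are therefore $\CI$-$S$-rings, which is exactly what Lemma~\ref{cigwr} requires. Furthermore $\mathcal{D}_{D/L}$ is a $2$-$S$-ring over $C_2^k$ with $k\leq 4$, so it is cyclotomic by Lemma~\ref{p3} when $k\leq 3$ and by Lemma~\ref{p4cyclotom} when $k=4$.

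First I would examine the section $S=U/L$. Because $L\leq U<D$, the group $S=U/L$ is a proper subgroup of $D/L\cong C_2^k$, so $S\cong C_2^m$ with $m\leq k-1\leq 3$. By Lemma~\ref{psection} the restriction $\mathcal{D}_S=(\mathcal{D}_{D/L})_S$ is again a $2$-$S$-ring, whence Lemma~\ref{p3} shows that $\mathcal{D}_S$ is cyclotomic and, crucially, Cayley minimal \emph{unless} $m=3$ and $\mathcal{D}_S\cong\mathbb{Z}C_2\wr\mathbb{Z}C_2\wr\mathbb{Z}C_2$. In the principal case, when $\mathcal{D}_S$ is Cayley minimal, the proof is finished at once: $\mathcal{D}_{D/L}$ is cyclotomic and $\mathcal{D}_S$ is Cayley minimal, so Lemma~\ref{cicayleymin} shows that $\mathcal{D}$ is a $\CI$-$S$-ring.

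It remains to deal with the one exceptional section, where $\mathcal{D}_S\cong\mathbb{Z}C_2\wr\mathbb{Z}C_2\wr\mathbb{Z}C_2$. Then $m=3$ forces $k=4$, so $D/L\cong C_2^4$ and $|D:U|=|D/L|/|S|=2$ is prime; this is the setting of Lemma~\ref{citens}, which applies as soon as $\mathcal{D}_{D/L}$ has a basic set of size $1$ outside $S$. Since the basic sets of $\mathcal{D}_{D/L}$ lying in $U/L$ are exactly the basic sets of $\mathcal{D}_S$, one has $\O_\theta(\mathcal{D}_{D/L})\cap(U/L)=\O_\theta(\mathcal{D}_S)$, a group of order $2$. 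Hence if $\O_\theta(\mathcal{D}_{D/L})\not\leq U/L$ there is a singleton outside $S$ and Lemma~\ref{citens} applies directly.

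In the complementary subcase $\O_\theta(\mathcal{D}_{D/L})\leq U/L$ I would exploit the wreath structure of $\mathcal{D}_S$: letting $L_1$ be the $\mathcal{D}$-subgroup with $L_1/L=\O_\theta(\mathcal{D}_S)\cong C_2$, the aim is to show that every basic set of $\mathcal{D}$ outside $U$ is a union of $L_1$-cosets, so that $\mathcal{D}$ is in fact the nontrivial $U/L_1$-wreath product. The section $U/L_1\cong C_2^2$ then carries a Cayley minimal $2$-$S$-ring, and Lemma~\ref{cicayleymin} again gives the conclusion; alternatively, if $\mathcal{D}_{D/L}$ happens to be normal one may instead invoke Lemma~\ref{kerci}, whose condition on $U$ is guaranteed since every $S$-ring over the proper section $U$ is $\CI$. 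The main obstacle is precisely the legitimacy of raising the bottom of the wreath product from $L$ to $L_1$: this does not follow from the wreath axiom for $L$ alone, and I expect to settle it by inspecting the finitely many $2$-$S$-rings over $C_2^4$ whose restriction to an index-$2$ subgroup is $\mathbb{Z}C_2\wr\mathbb{Z}C_2\wr\mathbb{Z}C_2$ (these are available from the COCO2P computations used elsewhere in the paper), verifying for each that it either produces a singleton outside $S$, so that Lemma~\ref{citens} applies, or permits the re-choice of section just described.
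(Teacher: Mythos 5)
Your principal case is exactly the paper's argument: $\mathcal{D}_U$ and $\mathcal{D}_{D/L}$ are $\CI$ because $U$ and $D/L$ are proper sections, $\mathcal{D}_{D/L}$ is cyclotomic by Lemma~\ref{p3} or Lemma~\ref{p4cyclotom}, and when $\mathcal{D}_S\ncong \mathbb{Z}C_2\wr\mathbb{Z}C_2\wr\mathbb{Z}C_2$ the Cayley minimality of $\mathcal{D}_S$ from Lemma~\ref{p3} lets Lemma~\ref{cicayleymin} finish. The problem is the exceptional case, where your argument has a genuine gap that you yourself flag. Your plan rests on the dichotomy ``either $\mathcal{D}_{D/L}$ has a singleton basic set outside $S$ (so Lemma~\ref{citens} applies) or the bottom of the wreath product can be raised from $L$ to $L_1$ with $L_1/L=\O_\theta(\mathcal{D}_S)$.'' This dichotomy is not exhaustive: raising the section requires $A:=\O_\theta(\mathcal{D}_S)\leq\rad(X)$ for \emph{every} basic set $X$ of $\mathcal{D}_{D/L}$ outside $S$, and there remains the possibility of a basic set $X$ outside $S$ with $|X|>1$ and $|\rad(X)|=1$, for which neither branch of your plan does anything. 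Deferring this to a COCO2P enumeration is not a proof as written, and the enumeration would in fact turn up such rings --- the paper devotes a separate branch of its argument precisely to them.

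The paper closes the exceptional case as follows. First, since $\mathcal{D}_{(D/L)/S}\cong\mathbb{Z}C_2$, every basic set outside $S$ lies in an $S$-coset, so its radical is a $\mathcal{D}_S$-subgroup; if all these radicals are nontrivial they automatically contain the \emph{least} $\mathcal{D}_S$-subgroup $A$ (here the specific structure of $\mathbb{Z}C_2\wr\mathbb{Z}C_2\wr\mathbb{Z}C_2$ is used), so the re-choice of section you want is legitimate with no computation, and Lemma~\ref{cicayleymin} applies to the smaller section. If instead some $X$ outside $S$ has trivial radical, the paper switches tools entirely: when $\mathcal{D}_{D/L}$ is decomposable it invokes \cite[Lemma~5.8]{KR2} to get $\aut_{D/L}(\mathcal{D}_{D/L})^S=\aut_S(\mathcal{D}_S)$ and concludes by Lemma~\ref{cigwr}; when $\mathcal{D}_{D/L}$ is indecomposable it is normal by Lemma~\ref{p45} and Lemma~\ref{kerci} applies. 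Neither of these two devices appears in your proposal, and without them (or a substitute) the trivial-radical subcase is unhandled. Your observation that $\O_\theta(\mathcal{D}_{D/L})\not\leq S$ yields a singleton outside $S$ and hence Lemma~\ref{citens} is correct, but it only covers part of that subcase.
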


\begin{proof}
The $S$-ring $\mathcal{D}_{D/L}$ is cyclotomic by Lemma~\ref{p3} whenever $|D/L|\leq 8$ and by Lemma~\ref{p4cyclotom} whenever $|D/L|=16$. The $S$-ring $\mathcal{D}_S$ is a $2$-$S$-ring by Lemma~\ref{psection}. If $\mathcal{D}_S\ncong \mathbb{Z}C_2\wr \mathbb{Z}C_2\wr \mathbb{Z}C_2$ then $\mathcal{D}_S$ is Cayley minimal by Lemma~\ref{p3}. The $S$-rings $\mathcal{D}_U$ and $\mathcal{D}_{D/L}$ are $\CI$-$S$-rings by the assumption of the lemma. So $\mathcal{D}$ is a $\CI$-$S$-ring by Lemma~\ref{cicayleymin}. 

Assume that 
$$\mathcal{D}_S\cong \mathbb{Z}C_2\wr \mathbb{Z}C_2\wr \mathbb{Z}C_2.$$
In this case $|D/L|=16$, $|S|=8$, and there exists the least $\mathcal{D}_S$-subgroup $A$ of $S$ of order~$2$. Every basic set of $\mathcal{D}_{D/L}$ outside $S$ is contained in an $S$-coset because  $\mathcal{D}_{(D/L)/S}\cong \mathbb{Z}C_2$. So $\rad(X)$ is an $\mathcal{D}_S$-subgroup for every $X\in \mathcal{S}(\mathcal{D}_{D/L})$ outside $S$. If $|\rad(X)|>1$ for every $X\in \mathcal{S}(\mathcal{D}_{D/L})$ outside $S$ then $\mathcal{D}_{D/L}$ is the $S/A$-wreath product because $A$ is the least $\mathcal{D}_S$-subgroup. This implies that $\mathcal{D}$ is the $U/\pi^{-1}(A)$-wreath product, where $\pi:D\rightarrow D/L$ is the canonical epimorphism. One can see that $|D/\pi^{-1}(A)|\leq 8$ and $|U/\pi^{-1}(A)|\leq 4$. The $S$-rings $\mathcal{D}_{D/\pi^{-1}(A)}$ and $\mathcal{D}_{U/\pi^{-1}(A)}$ are $2$-$S$-rings by Lemma~\ref{psection}. The $S$-ring $\mathcal{D}_{D/\pi^{-1}(A)}$ is cyclotomic by Lemma~\ref{p3} and the $S$-ring $\mathcal{D}_{U/\pi^{-1}(A)}$ is Cayley minimal by Lemma~\ref{p3}. The $S$-rings $\mathcal{D}_U$ and $\mathcal{D}_{D/\pi^{-1}(A)}$ are $\CI$-$S$-rings by the assumption of the lemma. Thus, $\mathcal{D}$ is a $\CI$-$S$-ring by Lemma~\ref{cicayleymin}.

Suppose that there exists a basic set $X$ of $\mathcal{D}_{D/L}$ outside $S$ with $|\rad(X)|=1$. If $\mathcal{D}_{D/L}$ is decomposable then 
$$\aut_{D/L}(\mathcal{D}_{D/L})^S=\aut_S(\mathcal{D}_S)$$
by~\cite[Lemma~5.8]{KR2}. Therefore $\mathcal{D}$ is a $\CI$-$S$-ring by Lemma~\ref{cigwr}.

If $\mathcal{D}_{D/L}$ is indecomposable then $\mathcal{D}_{D/L}$ is normal by Lemma~\ref{p45}. So all conditions of Lemma~\ref{kerci} hold for $\mathcal{D}$. Thus, $\mathcal{D}$ is a $\CI$-$S$-ring. The lemma is proved.
\end{proof}

From the results obtained in~\cite{AlN, CLi} it follows that the group $C_2^n$ is a $\DCI$-group for $n\leq 5$. However, this does not imply that every $S$-ring over $C_2^n$, where $n\leq 5$, is a $\CI$-$S$-ring (see Remark~1). Below we check that all $S$-rings over the above groups are $\CI$-$S$-rings.

\begin{lemm}\label{everyci}
Let $n\leq 5$. Then every $S$-ring over $G$ is a $\CI$-$S$-ring. 
\end{lemm}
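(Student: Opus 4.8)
The plan is to induct on $n$, establishing the statement $P(n)$: every $S$-ring over $G=C_2^n$ is a $\CI$-$S$-ring. The cases $n\le 1$ are immediate, since then $\mathbb{Z}G$ is the only $S$-ring. Assuming $P(m)$ for all $m<n$, and noting that every proper section of $G$ is elementary abelian of rank strictly less than $n$, the induction hypothesis says precisely that every $S$-ring over a proper section of $G$ is $\CI$. By Lemma~\ref{schurian} every $S$-ring over $G$ is schurian, so by Remark~1 it suffices to prove that $V(K,G)$ is a $\CI$-$S$-ring for every $K\in\Sup_2^{\min}(G_\r)$. I would fix such a $K$, set $\mathcal{A}=V(K,G)$, and apply Lemma~\ref{minpring} with $H=\{e\}$ (so that $G/H=G$ is a $2$-group); this shows $\mathcal{A}$ is a $2$-$S$-ring and places us in the setting of the structural lemmas of this section. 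It then remains to show that every such $2$-$S$-ring is $\CI$, which I would do by splitting into the decomposable and indecomposable cases.

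If $\mathcal{A}$ is decomposable, then it is a nontrivial $S$-wreath product for some $\mathcal{A}$-section $S=U/L$. Nontriviality forces $L\neq\{e\}$, hence $G/L\cong C_2^k$ with $k\le n-1\le 4$, and $\mathcal{A}_{G/L}$ is again a $2$-$S$-ring by Lemma~\ref{psection}. The induction hypothesis supplies exactly the assumption of Lemma~\ref{2cigwr} that every $S$-ring over a proper section of $G$ is $\CI$, so that lemma (with $D=G$ and $\mathcal{D}=\mathcal{A}$) immediately gives that $\mathcal{A}$ is $\CI$. A single application of Lemma~\ref{2cigwr} thus disposes of all decomposable $2$-$S$-rings uniformly for every $n\le 5$.

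If $\mathcal{A}$ is indecomposable, I would argue by the value of $n$. For $n\le 3$ an indecomposable $2$-$S$-ring equals $\mathbb{Z}G$ (as recorded in the proof of Lemma~\ref{pnorm}), and $\mathbb{Z}G$ is $\CI$. For $n=5$, Lemma~\ref{p45} yields $\mathcal{A}\cong\mathbb{Z}C_2\otimes\mathcal{A}^{\prime}$ with $\mathcal{A}^{\prime}$ an indecomposable $2$-$S$-ring over $C_2^4$; viewing the tensor product as the star product $\mathcal{A}_V\star\mathcal{A}_W$ with $V\cap W=\{e\}$, $\mathcal{A}_V=\mathbb{Z}C_2$, and $\mathcal{A}_{W/(V\cap W)}=\mathcal{A}_W=\mathcal{A}^{\prime}$, both relevant factors are $\CI$ — the former trivially and the latter by $P(4)$ — so Lemma~\ref{cistar} (its schurity hypothesis guaranteed by Lemma~\ref{schurian}) gives that $\mathcal{A}$ is $\CI$. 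In this way the $n=5$ indecomposable case is reduced to $P(4)$.

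The remaining, and I expect hardest, case is $n=4$ with $\mathcal{A}$ indecomposable, where Lemma~\ref{p45} only asserts that $\mathcal{A}$ is normal, i.e. $\aut(\mathcal{A})\le N_{\sym(G)}(G_\r)$. Here I must verify the transjugacy criterion of Lemma~\ref{trans} directly: every $G$-regular subgroup of $\aut(\mathcal{A})$ must be $\aut(\mathcal{A})$-conjugate to $G_\r$. A clean sufficient condition is that the point stabilizer $\aut_G(\mathcal{A})=\aut(\mathcal{A})_e$ have odd order; then $|\aut(\mathcal{A})|=2^4|\aut_G(\mathcal{A})|$ makes $G_\r$ a Sylow $2$-subgroup, and any $G$-regular subgroup, being of order $2^4$, is likewise a Sylow $2$-subgroup and hence conjugate to $G_\r$ by Sylow's theorem. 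For the finitely many indecomposable $2$-$S$-rings over $C_2^4$ in which this order condition fails, I would instead enumerate the $G$-regular subgroups inside the (small, already computed) groups $\aut(\mathcal{A})$ with COCO2P, as is done throughout this section, and check that each is conjugate to $G_\r$; this verification is the genuine obstacle. Once it is complete, $V(K,G)$ is $\CI$ for every $K\in\Sup_2^{\min}(G_\r)$, and the reduction of the first paragraph closes the induction.
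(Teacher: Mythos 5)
Your overall skeleton coincides with the paper's: reduce via Lemma~\ref{schurian} and Remark~1 to $\mathcal{A}=V(K,G)$ with $K\in\Sup_2^{\min}(G_\r)$, note $\mathcal{A}$ is a $2$-$S$-ring by Lemma~\ref{minpring}, handle the decomposable case by Lemma~\ref{2cigwr} and the indecomposable $n=5$ case by Lemma~\ref{p45} together with Lemma~\ref{cistar}. The divergence is at $n\le 4$: the paper simply quotes~\cite[Lemma~5.7]{KR2} for the whole range $n\le 4$, whereas you attempt to reprove it by induction, and that is where your argument has a gap.

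Concretely, your ``clean sufficient condition'' for the indecomposable $n=4$ case is essentially vacuous. If $\mathcal{A}$ is a schurian $S$-ring with a basic set $X$ of size greater than~$1$, then $X$ is an orbit of the point stabilizer $\aut(\mathcal{A})_e$, so $|\aut(\mathcal{A})_e|$ is divisible by $|X|$, which for a $2$-$S$-ring is an even number. Hence for every indecomposable $2$-$S$-ring over $C_2^4$ other than $\mathbb{Z}G$ the stabilizer has even order, $G_\r$ is not a Sylow $2$-subgroup of $\aut(\mathcal{A})$, and the Sylow argument never applies. Note also that normality (Lemma~\ref{p45}) makes matters harder, not easier: if $G_\r\trianglelefteq\aut(\mathcal{A})$, then $G$-transjugacy forces $G_\r$ to be the \emph{unique} $G$-regular subgroup of $\aut(\mathcal{A})$, which is exactly the nontrivial fact to be verified. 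So the entire indecomposable $n=4$ case rests on the COCO2P enumeration of $G$-regular subgroups that you describe but do not carry out; until that finite check is done (or \cite[Lemma~5.7]{KR2} is invoked, as the paper does), the induction does not close.
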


\begin{proof}
Every $S$-ring over $G$ is schurian by Lemma~\ref{schurian}. So to prove the lemma, it is sufficient to prove that $\mathcal{B}=V(K,G)$ is a $\CI$-$S$-ring for every $K\in\Sup_2^{\rm min}(G_\r)$ (see Remark~1). The $S$-ring $\mathcal{B}$ is a $2$-$S$-ring by Lemma~\ref{minpring}. If $n\leq 4$ then $\mathcal{B}$ is $\CI$ by~\cite[Lemma~5.7]{KR2}. Thus, if $n=4$ then the statement of the lemma holds.

Let $n=5$. Suppose that $\mathcal{B}$ is indecomposable. Then the second part of Lemma~\ref{p45} implies $\mathcal{B}\cong \mathbb{Z}C_2\otimes \mathcal{B}^{\prime}$, where $\mathcal{B}^{\prime}$ is indecomposable $2$-$S$-ring over $C_2^4$. Since $\mathcal{B}$ is schurian by Lemma~\ref{schurian} and every $S$-ring over an elementary abelian group of rank at most~$4$ is $\CI$ by the above paragraph, we conclude that $\mathcal{B}$ is a $\CI$-$S$-ring by Lemma~\ref{cistar}.

Now suppose that $\mathcal{B}$ is decomposable, i.e. $\mathcal{B}$ is the nontrivial $S=U/L$-wreath product for some $\mathcal{B}$-section $S=U/L$. Clearly, $|G/L|\leq 16$. The $S$-ring $\mathcal{B}_{G/L}$ is a $2$-$S$-ring by Lemma~\ref{psection}. Since every $S$-ring over an elementary abelian group of rank at most~$4$ is $\CI$, $\mathcal{B}$ is a $\CI$-$S$-ring by Lemma~\ref{2cigwr}. The lemma is proved.
\end{proof}

\section{Proof of Theorem~\ref{main}}

Let $G=H \times P$, where $H \cong C_2^5$ and $P \cong  C_p$, where $p$ is a prime. These notations are valid until the end of the paper. If $p=2$ then $G$ is not a $\DCI$-group by~\cite{Now}. So in view of Lemma~\ref{cimin}, to prove Theorem~\ref{main}, it is sufficient to prove the following theorem.

\begin{theo}\label{main2}
Let $p$ be an odd prime and $K \in \Sup_2^{\rm min}(G_\r)$. Then $\mathcal{A}=V(K, G)$ is a $\CI$-$S$-ring.
\end{theo}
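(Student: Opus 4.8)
The plan is to proceed by induction on $|G|$, assuming that every $S$-ring over a proper section of $G$ is a $\CI$-$S$-ring, and then to use the structure theory of $S$-rings over groups of non-powerful order (Section~7) to reduce $\mathcal{A}=V(K,G)$ to one of a small number of cases, each of which is handled by the $\CI$-criteria accumulated in Sections~4--6. Concretely, I would first record that $G=H\times P\in\mathcal{E}$ with $P\cong C_p$ coprime to $|H|=32$, so $|G|$ is non-powerful and the apparatus of Lemmas~\ref{nonpower2}--\ref{nonpower4} applies. Let $H_1$ be the maximal $\mathcal{A}$-subgroup inside $H$ and $P_1$ the least $\mathcal{A}$-subgroup containing $P$. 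The induction hypothesis, together with Lemma~\ref{everyci} (every $S$-ring over $C_2^n$, $n\leq5$, is $\CI$) and the known $\CI$-property of sections of the form $C_2^k\times C_p$ with $k\leq4$ from~\cite{KM,KR2,MS}, supplies the base material: every proper $\mathcal{A}$-section of $G$ carries only $\CI$-$S$-rings.

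Next I would split into cases according to $H_1$. If $H_1=H$, then $H$ itself is an $\mathcal{A}$-subgroup, $\mathcal{A}_H$ is a $\CI$-$S$-ring over $C_2^5$ by Lemma~\ref{everyci}, and one expects $\mathcal{A}$ to factor as a tensor or star product $\mathcal{A}_H\star\mathcal{A}_{P_1}$ (via Lemma~\ref{nonpower2} or~\ref{nonpower3}), so that Lemma~\ref{cistar} gives the $\CI$-conclusion directly; the degenerate subcase $\mathcal{A}=\mathcal{A}_H\otimes\mathbb{Z}P$ is immediate from Lemma~\ref{tenspr}. If $H_1<H$, then Lemma~\ref{nonpower4} forces $\mathcal{A}$ to be a generalized wreath product of one of two shapes: either $\mathcal{A}=\mathcal{A}_{H_1}\wr\mathcal{A}_{G/H_1}$ with $\rk(\mathcal{A}_{G/H_1})=2$, or $\mathcal{A}=\mathcal{A}_{H_1P_1}\wr_S\mathcal{A}_{G/P_1}$ with $S=H_1P_1/P_1$ and $P_1<G$. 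In the first shape, $\mathcal{A}_{G/H_1}$ has rank~$2$ so $\mathcal{A}$ is $\CI$ by Lemma~\ref{trivial}. The second shape is the substantive case: here I would apply the generalized-wreath $\CI$-machinery (Lemmas~\ref{cigwr}, \ref{kerci}, \ref{citens}, \ref{cicayleymin}), noting that $G/P_1$ is a $2$-group so $\mathcal{A}_{G/P_1}$ is a $2$-$S$-ring by Lemma~\ref{minpring} and its detailed structure is controlled by the classification Lemmas~\ref{p5cyclotom}--\ref{p5decomp2} according to $|\O_\theta(\mathcal{A}_{G/P_1})|$.

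The key to making the second wreath case work is to feed the fine analysis of $2$-$S$-rings over $C_2^5$ into the right $\CI$-lemma. Since $\mathcal{A}_{H_1P_1}$ lives over a group of order dividing $16p$ and $\mathcal{A}_{G/P_1}$ lives over $C_2^5$, both factors are $\CI$ by induction and by the results of Section~8. To conclude from Lemma~\ref{cigwr} one needs the automorphism factorization $\aut_S(\mathcal{A}_S)=\aut_U(\mathcal{A}_U)^S\aut_{G/L}(\mathcal{A}_{G/L})^S$, and the point of Lemmas~\ref{p5cyclotom}--\ref{p5decomp2} is precisely to guarantee either Cayley minimality of $\mathcal{A}_S$ (invoking Lemma~\ref{cicayleymin}), or an appropriate normality/$2$-minimality of a quotient (invoking Lemma~\ref{kerci}), or the presence of a thin point outside $S$ (invoking Lemma~\ref{citens}). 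I therefore expect the main obstacle to be this second wreath case with $|\O_\theta(\mathcal{A}_{G/P_1})|$ small, where the automorphism groups do not factor for free: one must match the branch dichotomies of Lemmas~\ref{p5decomp1} and~\ref{p5decomp2} (wreath refinement, the inequality $|\aut_G|\ge|\aut_U|$, or normality of a quotient) against the hypotheses of the corresponding $\CI$-lemma, possibly iterating the wreath decomposition so as to shrink $G/L$ below order~$16$ and land in the cyclotomic, Cayley-minimal regime secured by Lemma~\ref{p3} and Lemma~\ref{p4cyclotom}.
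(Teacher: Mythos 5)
Your proposal follows essentially the same route as the paper: reduce via the non-powerful-order lemmas (\ref{nonpower2}--\ref{nonpower4}) to either a star product handled by Lemma~\ref{cistar} or the nontrivial $S=U/P_1$-wreath product with $\mathcal{A}_{G/P_1}$ a $2$-$S$-ring, then split on $|\O_\theta(\mathcal{A}_{G/P_1})|$ and match the branches of Lemmas~\ref{p5cyclotom}--\ref{p5decomp2} to Lemmas~\ref{trivial}, \ref{cicayleymin}, \ref{cigwr}, \ref{kerci}, \ref{citens}, exactly as in the paper's Lemma~\ref{main3} (the paper invokes the prior results of Lemma~\ref{subgroup} rather than a formal induction, but this is the same base material). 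The only substantive details you leave implicit are the ones the paper supplies in Lemma~\ref{proof2} (when $S$ has no gwr-complement the restriction $\aut_{G/P}(\mathcal{A}_{G/P})\to\aut_S(\mathcal{A}_S)$ is injective, which upgrades the inequality $|\aut_G(\mathcal{A})|\geq|\aut_U(\mathcal{A}_U)|$ to the factorization needed for Lemma~\ref{cigwr}) and in the closing of Cases~3--4 (passing to $\mathcal{B}=V(\aut(\mathcal{A})_{G/L}G_{\r},G)$, showing its thin radical grows, and finishing with Lemma~\ref{minnorm}), which is what your ``iterating the wreath decomposition'' must amount to.
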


The proof of Proposition~\ref{main2} will be given in the end of the section. We start with the next lemma concerned with proper sections of $G$.

\begin{lemm}\label{subgroup}
Let $S$ be a section of $G$ such that $S\neq G$. Then every schurian $S$-ring over $S$ is a $\CI$-$S$-ring.
\end{lemm}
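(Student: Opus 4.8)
The plan is to classify the possible isomorphism types of a proper section $S = U/L$ of $G$, and then to treat the $2$-group sections separately from the sections of non-powerful order. Since $G = C_2^5 \times C_p$ is abelian with $p$ odd, every section $S$ is abelian and $|S|$ divides $32p$; decomposing $S$ into the product of its Sylow subgroups and using $S \neq G$, one sees that either $S \cong C_2^k$ with $0 \le k \le 5$, or $S \cong C_2^k \times C_p$ with $0 \le k \le 4$. I would record this dichotomy first, since the two branches are handled by different tools.

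In the first branch, where $S$ is a $2$-group of rank at most $5$, there is essentially nothing to prove beyond invoking Lemma~\ref{everyci}, which asserts that every $S$-ring over $C_2^n$ with $n \le 5$ — in particular every schurian one — is a $\CI$-$S$-ring. So this case should be dispatched in a single line.

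The substantive branch is $S \cong C_2^k \times C_p$ with $k \le 4$, a group of non-powerful order. Here I would use that each such group is a $\DCI$-group: for $k \le 1$ it is cyclic ($C_p$ or $C_{2p}$) and this is classical, while for $2 \le k \le 4$ it is exactly the case $C_2^k \times C_p$ settled in \cite{KM, MS, KR2}. The point requiring care, in view of Remark~1, is that I need the \emph{stronger} statement that every schurian $S$-ring over $S$ is $\CI$, not merely that $S$ is $\DCI$, and the converse to Lemma~\ref{cimin} is not known. I would resolve this by observing that the cited arguments are carried out at the level of $S$-rings and in fact verify that $V(K, S)$ is a $\CI$-$S$-ring for every $K \in \Sup_2^{\min}(S_{\r})$, which by Remark~1 is precisely the required assertion. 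Alternatively, one can re-derive the statement within the present framework by induction on $|S|$: the structure theorems of Section~7 (Lemmas~\ref{nonpower2}, \ref{nonpower3}, \ref{nonpower4}) express a generic schurian $S$-ring over $C_2^k \times C_p$ as a star or generalized wreath product whose factors live over proper sections, which are then $\CI$ by the inductive hypothesis and by Lemma~\ref{everyci}, after which one concludes with the $\CI$-criteria of Section~5 (Lemmas~\ref{cistar}, \ref{cigwr}, \ref{citens}, \ref{kerci}, \ref{cicayleymin}).

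I expect the main obstacle to be exactly the gap flagged in Remark~1: guaranteeing $\CI$-ness at the level of every schurian $S$-ring over $S$ rather than only $\DCI$-ness of $S$. Concretely, the indecomposable schurian $S$-rings over $C_2^k \times C_p$ — those admitting no nontrivial star or generalized wreath decomposition, for which the reduction to proper sections is unavailable — are where I anticipate relying most directly on the cited results read at the $S$-ring level.
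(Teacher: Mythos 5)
Your proposal is correct and follows essentially the same route as the paper: the same dichotomy into $S\cong C_2^k$ ($k\leq 5$), handled by Lemma~\ref{everyci}, versus $S\cong C_2^k\times C_p$ ($k\leq 4$), handled by citing the earlier results at the level of $S$-rings (the paper invokes \cite[Remark~3.4]{KR2} for $k\leq 3$ and \cite[Remark~3.4, Theorem~7.1]{KR2} for $k=4$, which assert exactly the stronger ``every schurian $S$-ring is $\CI$'' statement you correctly identified as necessary in view of Remark~1). Your attention to the gap between $\DCI$-ness of $S$ and $\CI$-ness of all schurian $S$-rings over $S$ is precisely the right concern, and it is resolved the same way in the paper.
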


\begin{proof}
If $S\cong C_2^n$ for some $n\leq 5$ then we are done by Lemma~\ref{everyci}. Suppose that $S\cong C_2^n \times C_p$ for some $n\leq 4$. Then the statement of the lemma follows from~\cite[Remark~3.4]{KR2} whenever $n\leq 3$ and from~\cite[Remark~3.4,~Theorem~7.1]{KR2} whenever $n=4$. The lemma is proved.
\end{proof}

A key step towards the proof of Theorem~\ref{main2} is the following lemma.

\begin{lemm}\label{main3}
Let $\mathcal{A}$ be an $S$-ring over $G$ and $U$ an $\mathcal{A}$-subgroup with $U\geq P$. Suppose that $P$ is an $\mathcal{A}$-subgroup, $\mathcal{A}$ is the nontrivial $S$-wreath product, where $S=U/P$, $|S|=16$, and $\mathcal{A}_{G/P}$ is a $2$-$S$-ring. Then $\mathcal{A}$ is a $\CI$-$S$-ring.
\end{lemm}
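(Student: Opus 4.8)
The plan is to exhibit $\mathcal{A}$ as a generalized wreath product with $\CI$ factors and then push through the automorphism factorization in Lemma~\ref{cigwr}. First I would record the groups: from $H\cong C_2^5$, $P\cong C_p$ and $|S|=|U/P|=16$ we get $G/P\cong C_2^5$, $S\cong C_2^4$, $U\cong C_2^4\times C_p$, and, crucially, $|G:U|=2$. By hypothesis $\mathcal{A}=\mathcal{A}_U\wr_S\mathcal{A}_{G/P}$ is the nontrivial $S$-wreath product with $L=P$. The factor $\mathcal{A}_{G/P}$ is a $2$-$S$-ring over $C_2^5$, so it is schurian by Lemma~\ref{schurian} and a $\CI$-$S$-ring by Lemma~\ref{everyci}, while $\mathcal{A}_U$ is a schurian $S$-ring over the proper section $U\cong C_2^4\times C_p$ and hence a $\CI$-$S$-ring by Lemma~\ref{subgroup}. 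With both factors $\CI$, Lemma~\ref{cigwr} reduces the task to verifying
$$\aut_S(\mathcal{A}_S)=\aut_U(\mathcal{A}_U)^S\,\aut_{G/P}(\mathcal{A}_{G/P})^S,$$
and whenever possible I would aim for one of the stronger equalities $\aut_S(\mathcal{A}_S)=\aut_{G/P}(\mathcal{A}_{G/P})^S$ or $\aut_S(\mathcal{A}_S)=\aut_U(\mathcal{A}_U)^S$. Here $\mathcal{A}_S=(\mathcal{A}_{G/P})_S$ is a $2$-$S$-ring over $C_2^4$ by Lemma~\ref{psection}, hence cyclotomic by Lemma~\ref{p4cyclotom}.

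The next step is a case analysis governed by the basic sets of $\mathcal{A}_{G/P}$ outside $S$, all of which lie in the unique nonidentity $S$-coset of $G/P$. If one of these is a singleton, then $\O_\theta(\mathcal{A}_{G/P})\not\leq S$, and as $|G:U|=2$ is prime Lemma~\ref{citens} finishes immediately. If the whole coset is a single basic set of size $16$, then its preimage $G\setminus U$ is a single basic set of $\mathcal{A}$ with $U\leq\rad(G\setminus U)$, so $\mathcal{A}$ is the ordinary wreath product $\mathcal{A}_U\wr\mathbb{Z}C_2$ and Lemma~\ref{trivial} (case $U=L$) applies. More generally, writing $\pi\colon G\to G/P$ for the canonical epimorphism, if all basic sets outside $S$ share a common nontrivial $\mathcal{A}_S$-subgroup $A\leq S$ in their radicals, then $\mathcal{A}_{G/P}$ is the $S/A$-wreath product; since $\rad(X)=\pi^{-1}(\rad(\pi(X)))$ for every basic set $X$ of $\mathcal{A}$ outside $U$, the ring $\mathcal{A}$ is the $U/\pi^{-1}(A)$-wreath product with top quotient $G/\pi^{-1}(A)\cong C_2^{k}$, $k\leq 4$. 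As $\mathcal{A}_{G/\pi^{-1}(A)}$ is a $2$-$S$-ring by Lemma~\ref{psection}, Lemma~\ref{2cigwr} then applies.

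It remains to treat the quotients $\mathcal{A}_{G/P}$ with $\O_\theta(\mathcal{A}_{G/P})\leq S$ admitting no such common radical, and for these I would call on the classification of $2$-$S$-rings over $C_2^5$ from Section~8. If $\mathcal{A}_{G/P}$ is indecomposable it is normal by Lemma~\ref{p45}, and Lemma~\ref{kerci}---together with the fact that every $S$-ring over $U$ is $\CI$---gives the claim. If $\mathcal{A}_{G/P}$ is decomposable I would split according to $|\O_\theta(\mathcal{A}_{G/P})|$: the value $16$ forces $\mathcal{A}_S=\mathbb{Z}S$ and Lemma~\ref{trivial}; the values $8$, $4$ and $2$ are fed into Lemmas~\ref{p5cyclotom}, \ref{p5decomp1} and~\ref{p5decomp2}, whose conclusions match the available criteria---cyclotomicity plus Cayley-minimality of $\mathcal{A}_S$ into Lemma~\ref{cicayleymin}, normality of a suitable quotient into Lemma~\ref{kerci} or Lemma~\ref{minnorm}, and the order estimate $|\aut_{G/P}(\mathcal{A}_{G/P})|\geq|\aut_S(\mathcal{A}_S)|$ into the factorization required by Lemma~\ref{cigwr}.

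The hard part will be precisely this last family: decomposable, non-normal quotients in which $\mathcal{A}_S$ is not Cayley minimal (for instance $\mathcal{A}_S\cong\mathbb{Z}C_2\wr\mathbb{Z}C_2\wr\mathbb{Z}C_2\wr\mathbb{Z}C_2$) and for which neither a singleton nor a common radical is available outside $S$. There both shortcuts of Lemma~\ref{cigwr} fail, and one is forced to establish the full equality $\aut_U(\mathcal{A}_U)^S\,\aut_{G/P}(\mathcal{A}_{G/P})^S=\aut_S(\mathcal{A}_S)$ by an explicit computation of automorphism orders; for this the enumeration of $2$-$S$-rings over $C_2^5$ underlying Section~8, together with the bounds recorded in Lemmas~\ref{p5decomp1} and~\ref{p5decomp2}, will be indispensable.
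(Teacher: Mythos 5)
Your outline follows the same broad route as the paper: dispose of the indecomposable case via Lemma~\ref{p45} and Lemma~\ref{kerci}, of singletons outside $S$ via Lemma~\ref{citens}, of a common radical (gwr-complement) via Lemma~\ref{2cigwr}, and then split the decomposable case by $|\O_\theta(\mathcal{A}_{G/P})|\in\{2,4,8,16\}$ using Lemmas~\ref{p5cyclotom}, \ref{p5decomp1}, \ref{p5decomp2}. But there are two genuine gaps in the cases you yourself flag as hard. First, the order estimate $|\aut_{G/P}(\mathcal{A}_{G/P})|\geq|\aut_S(\mathcal{A}_S)|$ coming from Statement~2 of Lemmas~\ref{p5decomp1} and~\ref{p5decomp2} does not by itself feed into Lemma~\ref{cigwr}: what you need is $\aut_{G/P}(\mathcal{A}_{G/P})^S=\aut_S(\mathcal{A}_S)$, and for that you must know that restriction to $S$ is \emph{injective} on $\aut_{G/P}(\mathcal{A}_{G/P})$. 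The paper proves exactly this (its Lemma~\ref{proof2}): if $\varphi$ restricts to $\id_S$, then $\cyc(\langle\varphi\rangle,G/P)$ is a $2$-$S$-ring containing $\mathcal{A}_{G/P}$ whose thin radical contains $S$, and if it is not $\mathbb{Z}(G/P)$ it forces an $S/A$-wreath decomposition of $\mathcal{A}_{G/P}$, contradicting the standing assumption that $S$ has no gwr-complement. Your fallback of ``an explicit computation of automorphism orders'' establishing $\aut_U(\mathcal{A}_U)^S\aut_{G/P}(\mathcal{A}_{G/P})^S=\aut_S(\mathcal{A}_S)$ is not viable as stated: $\mathcal{A}_U$ is an $S$-ring over $C_2^4\times C_p$ with $p$ an arbitrary odd prime, so there is no finite enumeration to compute over, and the paper deliberately arranges matters so that only the top factor $\aut_{G/P}(\mathcal{A}_{G/P})^S$ is ever needed.

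Second, in the subcases where only a proper quotient is normal (Statement~4 of Lemma~\ref{p5decomp1} with $L\leq\O_\theta$ of order~$2$, Statement~3 of Lemma~\ref{p5decomp2} with $|B|\in\{2,4\}$), invoking Lemma~\ref{minnorm} requires verifying that $V(K,G)$ with $K=\aut(\mathcal{A})_{G/L}G_{\r}$ is itself a $\CI$-$S$-ring, which you do not address. The paper handles this by showing that $\mathcal{B}=V(K,G)$ again satisfies the hypotheses of the lemma being proved (it is the $S$-wreath product by Lemma~\ref{kerwreath} and $\mathcal{B}_{G/P}$ is a $2$-$S$-ring by Lemma~\ref{pextension}), and then --- using that every basic set of $\mathcal{B}_{G/P}$ lies in a single coset of the chosen subgroup, together with the radical condition $L\neq\rad(X)$ --- that $|\O_\theta(\mathcal{B}_{G/P})|$ strictly exceeds $|\O_\theta(\mathcal{A}_{G/P})|$, so that $\mathcal{B}$ falls into one of the previously settled cases. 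This recursion on the size of the thin radical is the mechanism that makes the case analysis terminate, and without it the appeal to Lemma~\ref{minnorm} is circular.
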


\begin{lemm}\label{proof1}
In the conditions of Lemma~\ref{main3}, suppose that $S$ has a gwr-complement with respect to $\mathcal{A}_{G/P}$. Then $\mathcal{A}$ is a a $\CI$-$S$-ring.
\end{lemm}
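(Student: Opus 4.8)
The plan is to exploit the gwr-complement hypothesis to refine the existing coarse wreath decomposition of $\mathcal{A}$ into one whose top quotient is a $2$-group of small rank, and then to conclude by the already-established Lemma~\ref{2cigwr}. First I would unpack the assumption: saying that $S=U/P$ has a gwr-complement with respect to $\mathcal{A}_{G/P}$ means there is a nontrivial normal $\mathcal{A}_{G/P}$-subgroup $L'$ of $G/P$ with $L'\leq S$ such that $\mathcal{A}_{G/P}$ is the $(S/L')$-wreath product. Let $\pi:G\to G/P$ be the canonical epimorphism and set $L=\pi^{-1}(L')$. Then $L$ is an $\mathcal{A}$-subgroup with $P\leq L\leq U$, and $L\neq\{e\}$ while $U\neq G$ (since $|U|=16p<32p=|G|$).

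The main step is to show that $\mathcal{A}=\mathcal{A}_U\wr_{U/L}\mathcal{A}_{G/L}$. Let $X\in\mathcal{S}(\mathcal{A})$ lie outside $U$. By the hypothesis of Lemma~\ref{main3} that $\mathcal{A}$ is the $U/P$-wreath product, we already have $P\leq\rad(X)$, so $X$ is a union of $P$-cosets and $\pi(X)$ is a basic set of $\mathcal{A}_{G/P}$ lying outside $S$. The $(S/L')$-wreath structure of $\mathcal{A}_{G/P}$ then gives $L'\leq\rad(\pi(X))$, i.e. $\pi(X)$ is a union of $L'$-cosets; pulling back along $\pi$ shows that $X$ is a union of $L$-cosets, that is $L\leq\rad(X)$. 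Hence the finer wreath decomposition holds. If $L=U$ this is an ordinary wreath product and $\mathcal{A}$ is a $\CI$-$S$-ring by Lemma~\ref{trivial}; otherwise it is a nontrivial $(U/L)$-wreath product.

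It remains to identify the quotient. Since $G/P\cong C_2^5$ and $L'$ is nontrivial, $G/L=(G/P)/L'\cong C_2^k$ for some $k\leq 4$, and $\mathcal{A}_{G/L}$ is a $2$-$S$-ring as a section of the $2$-$S$-ring $\mathcal{A}_{G/P}$ (Lemma~\ref{psection}). These are exactly the hypotheses of Lemma~\ref{2cigwr} applied with $D=G$ and section $U/L$: its requirement that every $S$-ring over a proper section of $G$ be a $\CI$-$S$-ring is supplied by Lemma~\ref{everyci} for the sections isomorphic to $C_2^n$ with $n\leq5$ and by Lemma~\ref{subgroup} for those isomorphic to $C_2^n\times C_p$ with $n\leq4$. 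Thus Lemma~\ref{2cigwr} yields that $\mathcal{A}$ is a $\CI$-$S$-ring.

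The delicate points are the radical pull-back in the second paragraph, which is where the gwr-complement hypothesis does its real work, and the matching of hypotheses to Lemma~\ref{2cigwr}; in particular the bottom component $\mathcal{A}_U$ over $C_2^4\times C_p$ must be $\CI$, which via Lemma~\ref{subgroup} requires it to be schurian (automatic in the intended application of Theorem~\ref{main2}, where $\mathcal{A}=V(K,G)$ is schurian so every $\mathcal{A}_U$ is schurian). I expect no separate case analysis for the Cayley-minimality obstruction $\mathcal{A}_{U/L}\cong\mathbb{Z}C_2\wr\mathbb{Z}C_2\wr\mathbb{Z}C_2$, since that exceptional configuration is precisely the one already absorbed into the proof of Lemma~\ref{2cigwr}.
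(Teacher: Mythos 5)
Your proposal is correct and follows essentially the same route as the paper: pull the gwr-complement $L'\leq S$ back to $L=\pi^{-1}(L')$, observe that $\mathcal{A}$ becomes the $U/L$-wreath product with $G/L\cong C_2^k$, $k\leq 4$, and $\mathcal{A}_{G/L}$ a $2$-$S$-ring, then conclude by Lemma~\ref{subgroup} and Lemma~\ref{2cigwr}. The only difference is that you spell out the radical pull-back step that the paper leaves implicit.
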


\begin{proof}
The condition of the lemma implies that there exists an $\mathcal{A}_{G/P}$-subgroup $A$ such that $\mathcal{A}_{G/P}$ is the nontrivial $S/A$-wreath product. This means that $\mathcal{A}$ is the nontrivial $U/\pi^{-1}(A)$-wreath product, where $\pi: G\rightarrow G/P$ is the canonical epimorphism. Note that $|G/\pi^{-1}(A)|\leq 16$ and $\mathcal{A}_{G/\pi^{-1}(A)}\cong \mathcal{A}_{(G/P)/A}$ is a $2$-$S$-ring by Lemma~\ref{psection}. Therefore $\mathcal{A}$ is a $\CI$-$S$-ring by Lemma~\ref{subgroup} and Lemma~\ref{2cigwr}. The lemma is proved.
\end{proof}

\begin{lemm}\label{proof2}
In the conditions of Lemma~\ref{main3}, suppose that $S$ does not have a gwr-complement with respect to $\mathcal{A}_{G/P}$. Then 
$$|\aut_{G/P}(\mathcal{A}_{G/P})^S|=|\aut_{G/P}(\mathcal{A}_{G/P})|.$$
\end{lemm}

\begin{proof}
To prove the lemma it is sufficient to prove that the group
$$(\aut_{G/P}(\mathcal{A}_{G/P}))_S=\{\varphi\in \aut_{G/P}(\mathcal{A}_{G/P}):~\varphi^S=\id_S\}$$ is trivial. Let $\varphi\in (\aut_{G/P}(\mathcal{A}_{G/P}))_S$. Put $\mathcal{C}=\cyc(\langle \varphi \rangle, G/P)$. Clearly, $\langle \varphi \rangle\leq \aut(\mathcal{A}_{G/P})$. So from Eqs.~(1) and~(2) it follows that $\mathcal{C}\geq \mathcal{A}_{G/P}$. Lemma~\ref{pextension} yields that $\mathcal{C}$ is a $2$-$S$-ring. Since $\varphi^S=\id_S$, we conclude that $\O_\theta(\mathcal{C})\geq S$. 

If $\mathcal{C}\neq \mathbb{Z}(G/P)$ then $\O_\theta(\mathcal{C})=S$. Therefore $\mathcal{C}=\mathbb{Z}S\wr_{S/A} \mathbb{Z}((G/P)/A)$ for some $\mathcal{C}$-subgroup $A$ by Statement~1 of~\cite[Proposition~4.3]{KR1}. This implies that $\mathcal{A}_{G/P}=\mathcal{A}_S\wr_{S/A} \mathcal{A}_{((G/P)/A)}$ because $\mathcal{C}\geq \mathcal{A}_{G/P}$ and $S$ is both $\mathcal{A}_{G/P}$, $\mathcal{C}$-subgroup. We obtain a contradiction with the assumption of the lemma. Thus, $\mathcal{C}=\mathbb{Z}(G/P)$ and hence $\varphi$ is trivial. So the group $(\aut_{G/P}(\mathcal{A}_{G/P}))_S$ is trivial. The lemma is proved.
\end{proof}

\begin{proof}[Proof of Lemma~\ref{main3}.]

If $\mathcal{A}_{G/P}$ is indecomposable then $\mathcal{A}_{G/P}$ is normal by Lemma~\ref{p45}. So $\mathcal{A}$ is a $\CI$-$S$-ring by Lemma~\ref{subgroup} and Lemma~\ref{kerci}. Further we assume that $\mathcal{A}_{G/P}$ is decomposable. Due to Lemma~\ref{proof1}, we may assume also that 
$$S~\text{does not have a gwr-complement with respect to}~\mathcal{A}_{G/P}.~\eqno(6)$$ 
If there exists $X\in \mathcal{S}(\mathcal{A}_{G/P})$ outside $S$ with $|X|=1$ then $\mathcal{A}$ is a $\CI$-$S$-ring by Lemma~\ref{subgroup} and Lemma~\ref{citens}. So we may assume that 
$$\O_\theta(\mathcal{A}_{G/P})\leq S.~\eqno(7)$$ 
Note that $|\O_\theta(\mathcal{A}_{G/P})|>1$ by Statement~1 of Lemma~\ref{psring} and $|\O_\theta(\mathcal{A}_{G/P})|\leq 16$ by Eq.~(7). So $|\O_\theta(\mathcal{A}_{G/P})|\in \{2,4,8,16\}$. We divide the rest of the proof into four cases depending on $|\O_\theta(\mathcal{A}_{G/P})|$.
\\
\\
\noindent~\textbf{Case~1: $|\O_\theta(\mathcal{A}_{G/P})|=16$.} 
\\
\\
Due to Eq.~(7), we conclude that $\mathcal{A}_S=\mathbb{Z}S$. So $\mathcal{A}$ is a $\CI$-$S$-ring by Lemma~\ref{subgroup} and Lemma~\ref{trivial}.
\\
\\
\noindent~\textbf{Case~2: $|\O_\theta(\mathcal{A}_{G/P})|=8$.} 
\\
\\
Since $\mathcal{A}_{G/P}$ is decomposable, Lemma~\ref{p5cyclotom} implies that $\mathcal{A}_{G/P}$ is cyclotomic. The $S$-ring $\mathcal{A}_S$ is a $2$-$S$-ring by Lemma~\ref{psection}. In view of Eq.~(7), we obtain that $|\O_\theta(\mathcal{A}_{S})|=8$. So Statement~2 of~\cite[Proposition~4.3]{KR1} yields that the $S$-ring $\mathcal{A}_S$ is Cayley minimal. Thus, $\mathcal{A}$ is a $\CI$-$S$-ring by Lemma~\ref{subgroup} and Lemma~\ref{cicayleymin}.
\\
\\
\noindent~\textbf{Case~3: $|\O_\theta(\mathcal{A}_{G/P})|=4$.} 
\\
\\
In this case one of the statements of Lemma~\ref{p5decomp1} holds for $\mathcal{A}_{G/P}$. If  Statement~1 of Lemma~\ref{p5decomp1} holds for $\mathcal{A}_{G/P}$ then we obtain a contradiction with Eq.~(6). 

If Statement~2 of Lemma~\ref{p5decomp1} holds for $\mathcal{A}_{G/P}$ then $|\aut_{G/P}(\mathcal{A}_{G/P})|\geq |\aut_S(\mathcal{A}_S)|$. From Lemma~\ref{proof2} it follows that $|\aut_{G/P}(\mathcal{A}_{G/P})^S|=|\aut_{G/P}(\mathcal{A}_{G/P})|$ and hence
$$|\aut_{G/P}(\mathcal{A}_{G/P})^S|\geq |\aut_S(\mathcal{A}_S)|.$$
Since $\aut_{G/P}(\mathcal{A}_{G/P})^S\leq \aut_S(\mathcal{A}_S)$, we conclude that $\aut_{G/P}(\mathcal{A}_{G/P})^S=\aut_S(\mathcal{A}_S)$. Thus, $\mathcal{A}$ is a $\CI$-$S$-ring by Lemma~\ref{subgroup} and Lemma~\ref{cigwr}.

If Statement~3 of Lemma~\ref{p5decomp1} holds for $\mathcal{A}_{G/P}$ then $\mathcal{A}_{G/P}$ is normal. In this case $\mathcal{A}$ is a $\CI$-$S$-ring by Lemma~\ref{subgroup} and Lemma~\ref{kerci}.

Suppose that Statement~4 of Lemma~\ref{p5decomp1} holds for $\mathcal{A}_{G/P}$, i.e. there exists an $\mathcal{A}_{G/P}$-subgroup $A\leq \O_\theta(\mathcal{A}_{G/P})$ of order~$2$ and $X=\{x_1,x_2\}\in \mathcal{S}(\mathcal{A}_{G/P})$ such that $\mathcal{A}_{(G/P)/A}$ is normal and $A\neq \rad(X)$. Let $L=\pi^{-1}(A)$, where $\pi:G\rightarrow G/P$ is the canonical epimorphism, and $\mathcal{B}=V(N,G)$, where $N=\aut(\mathcal{A})_{G/L}G_{\r}$. 

Prove that $\mathcal{B}$ is a $\CI$-$S$-ring. Lemma~\ref{kerwreath} implies that $\mathcal{B}$ is the $S$-wreath product. From Eqs.~(1) and~(2) it follows that $\mathcal{B}\geq \mathcal{A}$.  So $\mathcal{B}_{G/P}\geq \mathcal{A}_{G/P}$ and hence $\mathcal{B}_{G/P}$ is a $2$-$S$-ring by Lemma~\ref{pextension}. We obtain that $\mathcal{B}$ and $U$ satisfy the conditions of Lemma~\ref{main3}.

One can see that $X$ is a $\mathcal{B}_{G/P}$-set and 
$$\O_{\theta}(\mathcal{B}_{G/P})\geq \O_{\theta}(\mathcal{A}_{G/P})~\eqno(8)$$
because $\mathcal{B}_{G/P}\geq \mathcal{A}_{G/P}$. The definition of $\mathcal{B}$ yields that every basic set of $\mathcal{B}$ is contained in an $L$-coset and hence every basic set of $\mathcal{B}_{G/P}$ is contained in an $A$-coset. Therefore 
$$\{x_1\},\{x_2\}\in \mathcal{S}(\mathcal{B}_{G/P})~\eqno(9)$$ 
because $X$ is a $\mathcal{B}_{G/P}$-set and $A\neq \rad(X)$. Now from Eqs.~(8) and~(9) it follows that
$$|\O_{\theta}(\mathcal{B}_{G/P})|\geq 8.~\eqno(10)$$

If $\mathcal{B}_{G/P}$ is indecomposable then $\mathcal{B}_{G/P}$ is normal by Lemma~\ref{p45} and hence $\mathcal{B}$ is $\CI$ by Lemma~\ref{subgroup} and Lemma~\ref{kerci}; if $S$ has a gwr-complement with respect to $\mathcal{B}_{G/P}$ then $\mathcal{B}$ is $\CI$ by Lemma~\ref{proof1}; if $\O_\theta(\mathcal{B}_{G/P})\nleq S$ then $\mathcal{B}$ is $\CI$ by Lemma~\ref{subgroup} and Lemma~\ref{citens}; otherwise $\mathcal{B}$ is $\CI$ by Eq.~(10) and one of the Cases~1 or~2. 

Clearly, $\mathcal{A}_{G/L}\cong\mathcal{A}_{(G/P)/A}$ and hence $\mathcal{A}_{G/L}$ is normal. Also $\mathcal{A}_{G/L}$ is $\CI$ by Lemma~\ref{subgroup}. The $S$-ring  $\mathcal{B}$ is $\CI$ by the above paragraph.  Thus, $\mathcal{A}$ is $\CI$ by Lemma~\ref{minnorm}.
\\
\\
\noindent~\textbf{Case~4: $|\O_\theta(\mathcal{A}_{G/P})|=2$.} 
\\
\\
Let $A=\O_\theta(\mathcal{A}_{G/P})$. Clearly, $A$ is the least $\mathcal{A}_{G/P}$-subgroup. If $|\rad(X)|>1$ for every $X\in \mathcal{S}(\mathcal{A}_{G/P})$ outside $S$ then $A\leq \rad(X)$ for every $X\in \mathcal{S}(\mathcal{A}_{G/P})$ outside $S$ and we obtain a contradiction with Eq.~(6). So there exists $X\in \mathcal{S}(\mathcal{A}_{G/P})$ outside $S$ with $|\rad(X)|=1$. From Eq.~(7) it follows that $|X|>1$. Lemma~\ref{p5decomp2} implies that $|X|=4$. The number $\lambda=|X \cap Ax|$ does not depend on $x\in X$ by Lemma~\ref{intersection}. If $\lambda=2$ then $A\leq \rad(X)$, a contradiction. Therefore
$$\lambda=1.~\eqno(11)$$ 

One of the statements of Lemma~\ref{p5decomp2} holds for $\mathcal{A}_{G/P}$. If Statement~1  of Lemma~\ref{p5decomp2} holds for $\mathcal{A}_{G/P}$ then there exists $Y\in \mathcal{S}(\mathcal{A}_{G/P})$ with $|Y|=16$ and $|\rad(Y)|=16$. Since $|S|=16$, we conclude that $Y$ lies outside $S$ and hence $Y=(G/P)\setminus S$. This means that $S$ is a gwr-complement to $S$ with respect to $\mathcal{A}_{G/P}$. However, this contradicts to Eq.~(6).

If Statement~2 of Lemma~\ref{p5decomp2} holds for $\mathcal{A}_{G/P}$ then $|\aut_{G/P}(\mathcal{A}_{G/P})|\geq |\aut_S(\mathcal{A}_S)|$. So Lemma~\ref{proof2} implies that $\aut_{G/P}(\mathcal{A}_{G/P})^S=\aut_S(\mathcal{A}_S)$. Therefore, $\mathcal{A}$ is $\CI$ by Lemma~\ref{subgroup} and Lemma~\ref{cigwr}

Suppose that Statement~3 of Lemma~\ref{p5decomp2} holds for $\mathcal{A}_{G/P}$, i.e. there exists an $\mathcal{A}_{G/P}$-subgroup $B$ such that $|B|\in\{2,4\}$ and $\mathcal{A}_{(G/P)/B}$ is normal. Let $L=\pi^{-1}(B)$, where $\pi:G\rightarrow G/P$ is the canonical epimorphism, and $\mathcal{B}=V(N,G)$, where $N=\aut(\mathcal{A})_{G/L}G_{\r}$. 

Prove that $\mathcal{B}$ is a $\CI$-$S$-ring. As in Case~3, $\mathcal{B}$ is the $S$-wreath product by Lemma~\ref{kerwreath} and $\mathcal{B}\geq \mathcal{A}$ by Eqs.~(1) and~(2). So $\mathcal{B}_{G/P}\geq \mathcal{A}_{G/P}$ and hence $\mathcal{B}_{G/P}$ is a $2$-$S$-ring by Lemma~\ref{pextension}. Therefore $\mathcal{B}$ and $U$ satisfy the conditions of Lemma~\ref{main3}.

Note that $X$ is a $\mathcal{B}_{G/P}$-set and Eq.~(8) holds because $\mathcal{B}_{G/P}\geq \mathcal{A}_{G/P}$. By the definition of $\mathcal{B}$, every basic set of $\mathcal{B}$ is contained in an $L$-coset and hence every basic set of $\mathcal{B}_{G/P}$ is contained in a $B$-coset. The set $X$ is a $\mathcal{B}_{G/P}$-set with $|X|=4$ and $|\rad(X)|=1$. So there exists $X_1\in \mathcal{S}(\mathcal{B}_{G/P})$ such that 
$$X_1\subset X~\text{and}~|X_1|\in\{1,2\}.$$ 

If $|X_1|=1$ then $X_1\subseteq \O_{\theta}(\mathcal{B}_{G/P})$. If $|X_1|=2$ then $X_1$ is a coset by a $\mathcal{B}_{G/P}$-subgroup $A_1$ of order~$2$. Clearly, $A_1\subseteq \O_{\theta}(\mathcal{B}_{G/P})$. In view of Eq.~(11), we have $A_1\neq A$. Thus, in both cases $\O_{\theta}(\mathcal{B}_{G/P})\nleq A$. Together with Eq.~(8) this implies that 
$$|\O_{\theta}(\mathcal{B}_{G/P})|\geq 4.~\eqno(12)$$

If $\mathcal{B}_{G/P}$ is indecomposable then $\mathcal{B}_{G/P}$ is normal by Lemma~\ref{p45} and hence $\mathcal{B}$ is $\CI$ by Lemma~\ref{subgroup} and Lemma~\ref{kerci}; if $S$ has a gwr-complement with respect to $\mathcal{B}_{G/P}$ then $\mathcal{B}$ is $\CI$ by Lemma~\ref{proof1}; if $\O_\theta(\mathcal{B}_{G/P})\nleq S$ then $\mathcal{B}$ is $\CI$ by Lemma~\ref{subgroup} and Lemma~\ref{citens}; otherwise $\mathcal{B}$ is $\CI$ by Eq.~(12) and one of the Cases~1, 2, or~3. 

The $S$-ring $\mathcal{A}_{G/L}$ is normal because it is isomorphic to $\mathcal{A}_{(G/P)/B}$. The $S$-rings $\mathcal{A}_{G/L}$ and $\mathcal{B}$ are $\CI$ by Lemma~\ref{subgroup} and the above paragraph respectively. Thus, $\mathcal{A}$ is $\CI$ by Lemma~\ref{minnorm}.

All cases were considered. The lemma is proved.
\end{proof}

\begin{proof}[Proof of Theorem~\ref{main2}.]

Let $H_1$ be a maximal $\mathcal{A}$-subgroup contained in $H$ and $P_1$ the least $\mathcal{A}$-subgroup containing $P$. 

\begin{lemm}\label{proof3}
If $H_1=H$ then $\mathcal{A}$ is a $\CI$-$S$-ring.
\end{lemm}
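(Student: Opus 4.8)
The plan is to use the hypothesis $H_1 = H$---which says exactly that $H$ is an $\mathcal{A}$-subgroup---to split $\mathcal{A}$ as a star product and then invoke the $\CI$-criterion of Lemma~\ref{cistar}. The first step is to pin down the top section. Since $H$ is an $\mathcal{A}$-subgroup and $G/H\cong P\cong C_p$ is a $p$-group, Lemma~\ref{minpring} applies to $\mathcal{A}=V(K,G)$ with $K\in\Sup_2^{\min}(G_\r)$ and shows that $\mathcal{A}_{G/H}$ is a $p$-$S$-ring. Over a group of order $p$ this forces $\mathcal{A}_{G/H}\cong\mathbb{Z}C_p$: by Statement~1 of Lemma~\ref{psring} its thin radical is a nontrivial subgroup of $C_p$, hence equals $C_p$, so every element of $G/H$ is a singleton basic set.

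Next I would produce the decomposition. Because $P\le P_1$ we have $H_1P_1=HP_1=G$, and by the previous step $\mathcal{A}_{H_1P_1/H_1}=\mathcal{A}_{G/H}\cong\mathbb{Z}C_p$, so Lemma~\ref{nonpower3} yields the star product $\mathcal{A}=\mathcal{A}_{H_1}\star\mathcal{A}_{P_1}=\mathcal{A}_H\star\mathcal{A}_{P_1}$. Since $G=H\times P$ is a direct product of groups of coprime order, every subgroup splits along the factors; in particular $P_1=(H\cap P_1)\times P$, so $P_1/(H\cap P_1)\cong C_p$.

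Finally I would apply Lemma~\ref{cistar} with $V=H$ and $W=P_1$. The ambient group $G\cong C_2^5\times C_p$ lies in $\mathcal{E}$, and $\mathcal{A}$ is schurian because $\mathcal{A}=V(K,G)$. The two factors required by Lemma~\ref{cistar} are $\mathcal{A}_V=\mathcal{A}_H$, a schurian $S$-ring over $H\cong C_2^5$, and $\mathcal{A}_{W/(V\cap W)}=\mathcal{A}_{P_1/(H\cap P_1)}$, a schurian $S$-ring over a section isomorphic to $C_p$; both underlying groups are \emph{proper} sections of $G$, so both factors are $\CI$-$S$-rings by Lemma~\ref{subgroup}. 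Hence $\mathcal{A}$ is a $\CI$-$S$-ring.

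I do not expect a serious obstacle here, since this is the favourable case in which $H$ is already an $\mathcal{A}$-subgroup. The one point that genuinely requires care is the first step: one cannot use Lemma~\ref{nonpower2}, because $H=H_1$ is precisely the Hall $p'$-subgroup $(H_1P_1)_{p'}$ of $G$, so that lemma's hypothesis fails. The route must therefore pass through Lemma~\ref{nonpower3}, whose hypothesis $\mathcal{A}_{G/H}\cong\mathbb{Z}C_p$ is not automatic for an arbitrary schurian $S$-ring over $C_p$ and is secured only by the minimality of $K$ via Lemma~\ref{minpring}.
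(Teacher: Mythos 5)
Your proof is correct and follows essentially the same route as the paper: Lemma~\ref{minpring} forces $\mathcal{A}_{G/H}\cong\mathbb{Z}C_p$, Lemma~\ref{nonpower3} then gives $\mathcal{A}=\mathcal{A}_H\star\mathcal{A}_{P_1}$, and Lemma~\ref{subgroup} together with Lemma~\ref{cistar} finishes. The extra justifications you supply (why a $p$-$S$-ring over $C_p$ is the group ring, why Lemma~\ref{nonpower2} is unavailable here) are accurate but not a different argument.
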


\begin{proof}
The $S$-ring $\mathcal{A}_{G/H}$ is a $p$-$S$-ring over $G/H\cong C_p$ by Lemma~\ref{minpring}. So $\mathcal{A}_{G/H}\cong \mathbb{Z}C_p$. Clearly, $G=HP_1$. Therefore $\mathcal{A}=\mathcal{A}_{H} \star \mathcal{A}_{P_1}$ by Lemma~\ref{nonpower3}. Since $H$ and $P_1/(H\cap P_1)$ are proper sections of $G$, the $S$-rings $\mathcal{A}_H$ and $\mathcal{A}_{P_1/(H\cap P_1)}$ are $\CI$-$S$-ring by Lemma~\ref{subgroup}. Thus, $\mathcal{A}$ is a $\CI$-$S$-ring by Lemma~\ref{cistar}. The lemma is proved.
\end{proof}
 
\begin{lemm}\label{proof4}
If $H_1<H$ and $H_1P_1=G$ then $\mathcal{A}$ is a $\CI$-$S$-ring.
\end{lemm}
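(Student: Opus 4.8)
The plan is to realize $\mathcal{A}$ as a star product and then invoke Lemma~\ref{cistar}. Since $H_1<H$ and $H_1P_1=G$, the Hall $p'$-subgroup of $H_1P_1=G$ is exactly $H$, so $H_1\neq (H_1P_1)_{p'}$. Hence Lemma~\ref{nonpower2} applies and gives the star decomposition $\mathcal{A}=\mathcal{A}_{H_1}\star\mathcal{A}_{P_1}$. Recall that $\mathcal{A}=V(K,G)$ is schurian and $G\in\mathcal{E}$, so the hypotheses of Lemma~\ref{cistar} are in place once we verify that the two factor $S$-rings $\mathcal{A}_{H_1}$ and $\mathcal{A}_{P_1/(H_1\cap P_1)}$ are $\CI$-$S$-rings.

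For the first factor, $H_1\leq H<G$ is a proper subgroup, hence a proper section, so $\mathcal{A}_{H_1}$ is a $\CI$-$S$-ring by Lemma~\ref{subgroup}. For the second factor I would compute $|P_1/(H_1\cap P_1)|=|H_1P_1|/|H_1|=|G|/|H_1|$. As long as $H_1\neq\{e\}$, this order is strictly smaller than $|G|$, so $P_1/(H_1\cap P_1)$ is again a proper section and $\mathcal{A}_{P_1/(H_1\cap P_1)}$ is $\CI$ by Lemma~\ref{subgroup}; Lemma~\ref{cistar} then yields that $\mathcal{A}$ is a $\CI$-$S$-ring.

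The only obstacle is the degenerate situation in which the section $P_1/(H_1\cap P_1)$ coincides with all of $G$, i.e.\ $P_1=G$ and $H_1\cap P_1=\{e\}$; since $P_1=G$ forces $H_1\cap P_1=H_1$, this happens precisely when $H_1=\{e\}$ (which, together with $H_1P_1=G$, gives $P_1=G$), and here the star product is trivial and carries no information. To dispose of this case I would appeal to Lemma~\ref{nonpower4}: because $P_1=G$, its alternative~(2), which explicitly requires $P_1<G$, cannot occur, so alternative~(1) must hold, namely $\mathcal{A}=\mathcal{A}_{H_1}\wr\mathcal{A}_{G/H_1}$ with $\rk(\mathcal{A}_{G/H_1})=2$. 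As $H_1=\{e\}$, the quotient $\mathcal{A}_{G/H_1}$ equals $\mathcal{A}$, whence $\rk(\mathcal{A})=2$ and $\mathcal{A}$ is a $\CI$-$S$-ring because every $S$-ring of rank~$2$ is $\CI$. Thus the hard part is not the generic star-product argument but isolating and separately treating this rank-collapse case.
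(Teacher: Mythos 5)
Your proof follows the paper's argument for this lemma essentially verbatim: $H_1\neq (H_1P_1)_{p'}=H$ gives the star decomposition $\mathcal{A}=\mathcal{A}_{H_1}\star\mathcal{A}_{P_1}$ via Lemma~\ref{nonpower2}, the factors $\mathcal{A}_{H_1}$ and $\mathcal{A}_{P_1/(H_1\cap P_1)}$ are handled by Lemma~\ref{subgroup}, and Lemma~\ref{cistar} concludes. Your separate treatment of the degenerate case $H_1=\{e\}$ is a genuine (if small) improvement: the paper's proof simply asserts that $P_1/(H_1\cap P_1)$ is a proper section, which fails exactly when $H_1$ is trivial (forcing $P_1=G$), although in the paper this situation never reaches Lemma~\ref{proof4} because it is disposed of earlier, under Statement~1 of Lemma~\ref{nonpower4}, by the same rank-$2$ observation you make.
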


\begin{proof}
Since $H_1 \neq (H_1P_1)_{p'}=H$, Lemma~\ref{nonpower2} implies that $\mathcal{A}=\mathcal{A}_{H_1} \star \mathcal{A}_{P_1}$. The $S$-rings $\mathcal{A}_{H_1}$ and $\mathcal{A}_{P_1/(H\cap P_1)}$ are $\CI$-$S$-ring by Lemma~\ref{subgroup} because $H_1$ and $P_1/(H_1\cap P_1)$ are proper sections of $G$. Therefore $\mathcal{A}$ is a $\CI$-$S$-ring by Lemma~\ref{cistar}. The lemma is proved.
\end{proof}

In view of Lemma~\ref{proof3}, we may assume that $H_1<H$. Then one of the statements of Lemma~\ref{nonpower4} holds for $\mathcal{A}$. If Statement~1 of Lemma~\ref{nonpower4} holds for $\mathcal{A}$ then 
$$\mathcal{A}=\mathcal{A}_{H_1}\wr \mathcal{A}_{G/H_1},$$ 
where $\rk(\mathcal{A}_{G/H_1})=2$. If $H_1$ is trivial then $\rk(\mathcal{A})=2$ and obviously $\mathcal{A}$ is a $\CI$-$S$-ring. If $H_1$ is nontrivial then $\mathcal{A}$ is a $\CI$-$S$-ring by Lemma~\ref{subgroup} and Lemma~\ref{trivial}.

Assume that Statement~2 of Lemma~\ref{nonpower4} holds for $\mathcal{A}$, i.e.
$$\mathcal{A}=\mathcal{A}_U\wr_S \mathcal{A}_{G/P_1},$$ 
where $U=H_1P_1$, $S=U/P_1$, and $P_1<G$. In view of Lemma~\ref{proof4}, we may assume that $H_1P_1<G$, i.e. $\mathcal{A}$ is the nontrivial $S$-wreath product. The group $G/P_1$ is a $2$-group of order at most~$32$ because $P_1\geq P$. So Lemma~\ref{minpring} implies that $\mathcal{A}_{G/P_1}$ is a $2$-$S$-ring. If $|G/P_1|\leq 16$ then $\mathcal{A}$ is a $\CI$-$S$-ring by Lemma~\ref{subgroup} and Lemma~\ref{2cigwr}. So we may assume that $|G/P_1|=32$. Clearly, in this case 
$$P_1=P.$$ 
In view of Statement~2 of Lemma~\ref{psring}, we may assume also that 
$$|S|=16.$$ 
Indeed, if $|S|<16$ then $S$ is contained in an $\mathcal{A}_{G/P}$-subgroup $S^{\prime}$ of order $16$ by Statement~2 of Lemma~\ref{psring}. Clearly, $\mathcal{A}=\mathcal{A}_{U^{\prime}}\wr_{S^{\prime}}\mathcal{A}_{G/P}$, where $U^{\prime}=\pi^{-1}(S^{\prime})$ and $\pi:G\rightarrow G/P$ is the canonical epimorphism. Replacing $S$ by $S^{\prime}$, we obtain the required. 

Now all conditions of Lemma~\ref{main3} hold for $\mathcal{A}$ and $U$. Thus, $\mathcal{A}$ is a $\CI$-$S$-ring by Lemma~\ref{main3}. The theorem is proved.
\end{proof}

\end{document}